\DeclareMathOperator*{\softmin}{soft-min}
\DeclareMathOperator*{\argmine}{argmin-e}
\newtheorem{theorem}{Theorem}[section]
\newtheorem{lemma}[theorem]{Lemma}
\newtheorem{proposition}[theorem]{Proposition}
\newtheorem{assumption}[theorem]{Assumption}
\newtheorem{remark}[theorem]{Remark}
\newtheorem{definition}[theorem]{Definition}
\DeclareMathOperator*{\argmin}{arg\,min}
\DeclareFontFamily{U}{mathx}{\hyphenchar\font45}
\DeclareFontShape{U}{mathx}{m}{n}{
      <5> <6> <7> <8> <9> <10>
      <10.95> <12> <14.4> <17.28> <20.74> <24.88>
      mathx10
      }{}
\DeclareSymbolFont{mathx}{U}{mathx}{m}{n}
\DeclareMathAccent{\widecheck}{0}{mathx}{"71}
\DeclareMathAccent{\wideparen}{0}{mathx}{"75}
\title{Convergence of Multiscale Reinforcement Q-Learning Algorithms for Mean Field Game and Control Problems
}
\author{
Andrea Angiuli\thanks{Prime Machine Learning Team, Amazon. 320 Westlake Ave N, SEA83, Seattle, WA, 98109 (E-mail: \href{mailto:aangiuli@amazon.com}{aangiuli@amazon.com}). The work presented here does not relate to this author's position at Amazon. }
  \and Jean-Pierre Fouque\thanks{Department of Statistics and Applied Probability, South Hall, University of California, Santa Barbara, CA 93106, USA (E-mail: \href{mailto:fouque@pstat.ucsb.edu}{fouque@pstat.ucsb.edu}). Work supported by NSF grants DMS-1814091 and DMS-1953035.} 
  \and Mathieu Lauri\`ere\thanks{Shanghai Frontiers Science Center of Artificial Intelligence and Deep Learning; NYU-ECNU Institute of Mathematical Sciences at NYU Shanghai; NYU Shanghai, 567 West Yangsi Road, Shanghai, 200126, People’s Republic of China (E-mail: 
  \href{mailto:mathieu.lauriere@nyu.edu}{mathieu.lauriere@nyu.edu}).}
  \and  Mengrui Zhang\thanks{Department of Statistics and Applied Probability, South Hall, University of California, Santa Barbara, CA 93106, USA (E-mail: \href{mailto:mengrui@umail.ucsb.edu}{mengrui@umail.ucsb.edu}).
  }}
\begin{document}

\maketitle
\centerline{\textbf{Abstract}}
\begin{adjustwidth}{50pt}{50pt} 
We establish the convergence of the unified two-timescale Reinforcement Learning (RL) algorithm presented in \cite{Andrea20}. This algorithm provides solutions to Mean Field Game (MFG) or Mean Field Control (MFC) problems depending on the ratio of two learning rates, one for the value function and the other for the mean field term. 
Our proof of convergence highlights the fact that in the case of MFC several mean field distributions need to be updated and for this reason we present two separate algorithms, one for MFG and one for MFC.
We focus on a setting with finite state and action spaces, discrete time and infinite horizon. The proofs of convergence rely on 
a generalization of the  two-timescale approach of \cite{Borkar97}. 
The accuracy of approximation to the true solutions depends on the smoothing of the policies. We  provide a numerical example illustrating the convergence. 
\end{adjustwidth}
\section{Introduction}

Reinforcement learning (RL) is a type of machine learning technique that enables an agent to learn in an interactive environment by trial and error using feedback from its own actions and experiences, which is formalized through the concept of Markov Decision Process (MDP). For a general introduction, we refer to~\cite{sutton2018reinforcement}. In the past decade, reinforcement learning has attracted a growing interest and led to several breakthroughs in various fields from classical games, such as Atari~\cite{mnih2015human} or Go~\cite{silver2016mastering},  to robotics (see e.g.~\cite{gu2017deep,vecerik2017leveraging}), and more recently to applications in training models to follow human feedback (see e.g.~\cite{ouyang2022training}). While classical RL focuses on a single agent, multi-agent RL (MARL) aims at extending the paradigm to situations in which multiple agents learn while interacting. We refer to e.g.~\cite{busoniu2008comprehensive,zhang2021multi} for more background, and to~\cite{lanctot2017unified,yang2020overview} for game-theoretic perspectives. Although great successes have been achieved, in particular in classical games, they remain mostly limited to situations with a small number of players. The scalability of model-free MARL methods in terms of the number of agents is a key challenge.

Concurrently, the past decade has witnessed the development of the theory of mean field games (MFGs), introduced by Lasry and Lions~\cite{MR2295621}, and Caines, Huang and Malhame~\cite{MR2346927}. MFGs provide a framework to study games with a very large number of anonymous players interacting in a symmetric manner. The theory has been extensively investigated. 
While the classical concept of MFG focuses on the notion of Nash equilibrium, it can also be relevant to consider the notions of social optimum, which gives rise to so-called mean field control (MFC) problems. We refer to~\cite{MR3134900,carmona2018probabilisticI-II} for more background on this topic.  

In the past few years, the question of learning solutions to MFGs and MFC problems using model-free RL methods has gained momentum. Various methods in different settings have been proposed in the literature; see~\cite{lauriere2022learning} for a survey. With the exception of~\cite{Andrea20}, which is the basis of the present paper, these methods focus on solving one of the two types of problems, MFG or MFC. 
On the one hand, to learn MFGs solutions, two classical families of methods are those relying on strict contraction and fixed point iterations (e.g., \cite{guo2019learning,cui2021approximately,anahtarci2023q} with tabular Q-learning or deep RL), and those relying on monotonicity and the structure of the game (e.g.,  \cite{elie2020convergence,perrin2020continuousfp,lauriere2022learning} using fictitious play and tabular or deep RL). Two-timescale analysis to learn MFG solutions has been used in \cite{mguni2018decentralised,SubramanianMahajan-2018-RLstatioMFG}. On the other hand, learning solutions to MFC  amounts to solving an MDP in which the state is the mean field. This type of problems has been called mean field Markov decision processes and has been studied e.g. in~\cite{carmona2023model,motte2022mean,gu2021meanQ,motte2023quantitative}. This leads to value-based RL methods such as Q-learning \cite{carmona2023model,gu2021meanQ}. Other methods include policy gradient~\cite{CarmonaLauriereTan-2019-LQMFRL}, actor-critic methods~\cite{frikha2023actor} or model-based RL methods~\cite{pasztor2021efficientmodelbased}.

Recently, \cite{Andrea20} proposed a common RL algorithm to solve MFGs and MFCs using a single algorithm by adjusting the learning rates. The algorithm iteratively updates an estimation of the population distribution and an estimation of the representative player's value function. Depending on the rates at which these updates are done, the same algorithm can converge to the MFG solution or the MFC solution. At a high level, the method combines the classical Q-learning updates~\cite{watkins1989learning} with updates of the mean field using a two-timescale scheme.

In this paper, we analyze multi-timescales algorithms that have been introduced and studied numerically  in~\cite{Andrea20} for mean field games and mean field control problems.
As in these references, the problems are studied in the context of finite state and action spaces, and in infinite time horizon. The key technical ingredient is the theory of stochastic approximation, see~\cite{Borkar97}, and in particular results pertaining to multiple timescales. Algorithms used in model-free RL are typically asynchronous and based on samples. To simplify the presentation, we start by analyzing the convergence of an idealized algorithm which uses synchronous and expectation-based updates. 
We then consider a version where the expected-based updates are replaced by sample-based updates. 
Last, we study a realistic algorithm, with asynchronous,  sample-based updates. 
Intuitively, each of these two steps corresponds to incorporating one approximation: when the number of samples is large enough, sample-based updates mimic expectation-based updates, and when every state-action pair is visited frequently enough in an asynchronous method, the algorithm's behavior is close to its synchronous counterparts. Our proof of convergence reveals that in the case of MFC, one needs to update multiple population distributions, one for each (state, action) point. For clarity, we write two separate algorithms, one for MFG and one for MFC, and we provide a proof of convergence for each. 

\paragraph{Related works. } As mentioned above, this paper proves the convergence of the algorithms proposed in~\cite{Andrea20}. The algorithms have been extended to the finite time horizon setting in\\
\cite{angiuli2023reinforcement} and to deep actor-critic methods in~\cite{angiuli2023deep}. 
Closely related to this family of methods, learning MFG solutions using RL and multiple timescales has been studied in\\
\cite{mguni2018decentralised,SubramanianMahajan-2018-RLstatioMFG}. These methods focus purely on he MFG setting, while our method presents a unified algorithm for both MFG and MFC. 
Besides the multi-timescale aspect, another feature of our work is the fact that the algorithms use the trajectory of a single representative player to learn the asymptotic mean field distribution. 
Along a similar direction, \cite{zaman2023oracle} proposed and analyzed an RL algorithm for MFGs in a setting where the representative agent does not have access to an oracle that can provide the mean field information. 
Last, the question of approximating the solution to mean field dynamics using the trajectory of a single particle has been studied  in~\cite{du2023sequential,du2023empirical,du2023self}, in the continuous space setting.

\paragraph{Structure of the paper. }

In Section~\ref{sec:notations} we introduce notations that will be used throughout the paper. In Sections~\ref{sec:MFG} and~\ref{sec:MFC} respectively, we present the main convergence results for mean field game (Theorem \ref{th:mainMFG}) and mean field control (Theorem \ref{th:mainMFC}) using the idealized algorithm (synchronous updates with expectations). In particular, Theorem~\ref{thMFGcvg} (resp. Theorem~\ref{thMFCcvg}) establishes the convergence  while Theorem~\ref{th:accuracyMFG} (resp. Theorem~\ref{mfc_main}) analyzes the accuracy in the mean field game setting (resp. mean field control setting). 
In Section~\ref{sec:example}, we present a simople example for which the assumptions are satisfied and we provide  numerical illustrations of the two-timescale time-scale algorithms. In Section~\ref{sec:sto-approx}, we extend the convergence analysis to the algorithm with sample-based updates, but still synchronous. Section~\ref{sec:sto-approx-asynchronous} further extends the analysis to the case of sample-based asynchronous updates. 
We conclude the paper in Section~\ref{sec:conclusion}.

\section{Presentation of the Model and the Algorithms}\label{sec:notations}
 
\paragraph{Notations. }
The following notations will be used for both mean field games and mean field control problems.

Let $\mathcal{{X}} =\{ x_0, \dots, x_{|\mathcal{{X}}|-1}\}$ be a finite state space and 
$\mathcal{{A}} =\{ a_0, \dots, a_{|\mathcal{{A}}|-1}\}$ be a finite action space. We denote by $\Delta^{|\mathcal{X}|}$ the simplex of probability measures on $\mathcal{X}$. We denote by $\boldsymbol{\delta}$ the indicator function, i.e., for any $x \in \mathcal{X}$, $\boldsymbol{\delta}(x) = [\mathbbm{1}_{x_0}(x),...,\mathbbm{1}_{x_{|\mathcal{{X}}|-1}}(x)]$, which is a vector full of $0$ except at the coordinate corresponding to $x$. 
Let $p$ : $\mathcal{X} \times \mathcal{X} \times \mathcal{A} \times \Delta^{|\mathcal{X}|}\to \Delta^{|\mathcal{X}|}$ be a transition kernel. We will sometimes view it as a function:
\begin{align*}
    p : \mathcal{X} \times \mathcal{X} \times \mathcal{A} \times \Delta^{|\mathcal{X}|}\to [0,1], \hspace{20pt}(x,x',a,\mu)\mapsto p(x'|x,a,\mu)
\end{align*}
which will be interpreted as the probability at any given time step to jump to state $x'$ starting from state $x$ and using action $a$, when the population distribution is $\mu$.

A policy $\pi\in \Pi$ is a collection of probability distributions $\{\pi(x)=\pi(\cdot|x), x\in \mathcal{X}\}$ on the set of actions $\mathcal{A}$.
We  denote by $\mathrm{P}^{{\pi},{\mu}}$ the transition kernel according to the distribution ${\mu}$ on $\mathcal{X}$ and the policy ${{\pi}}\in \Pi$,   defined for any  distribution $\tilde\mu \in \Delta^{|\mathcal{X}|}$ by:
\begin{equation}
\label{eq:def-Ptransitions}
    ( \tilde\mu\mathrm{P}^{\pi,\mu})(x) = \sum_{x'\in \mathcal{X}} \tilde\mu(x') \sum_{a\in \mathcal{A}} \pi(a|x') p(x|x',a,\mu), \qquad x \in \mathcal{X}.
\end{equation}

Let $f$: $\mathcal{X} \times \mathcal{A} \times \Delta^{|\mathcal{X}|} \to \mathbbm{R}$ be a running cost function. We interpret $f(x,a,\mu)$ as the one-step cost, at any given time step, incurred to a representative agent who is at state $x$ and uses action $a$ while the population distribution is $\mu$. We first give a quick review of classical Q-learning for one agent and without the effect of the population distribution $\mu$.

\paragraph{Classical Q-learning. }  Classical RL aims at solving a Markov Decision Process (MDP) using the following setting. At each discrete time $n$, the agent observes her state  $X_n$ and chooses an action $A_n$ based on it. Then the environment evolves and provides the agent with a new state $X_{n+1}$ and reports a reward  $r_{n+1}$. The goal of the agent is to find an optimal policy $\pi$  that  assigns to each state an optimal probability of actions in order to maximize the expected cumulative rewards. The problem can be recast as the problem of learning the optimal state-action value function, also called Q-function: $Q^\pi(x,a)$ represents the expected cumulative discounted rewards when starting at state $x$, using an action $a$, and then following policy $\pi$. Mathematically, 
$$
    Q^\pi(x,a) = \mathbbm{E}\left[\sum_{n=0}^{\infty}\gamma^n r_{n+1}|X_0 = x, A_0=a\right],
$$
where $r_{n+1} = r(X_n,A_n)$ is the instantaneous reward, $\gamma\in(0,1)$ is a discounting factor, $X_{n+1}$ is distributed according to a transition probability which depends on $X_n$ and the action drawn from $\pi(X_n)$. The goal is to compute the optimal Q-function defined as:
\[
    Q^*(x,a) = \max_{\pi} Q^\pi(x,a).
\]
To this end, the Q-learning method was introduced by \cite{watkins1989learning}. The basic idea is to iteratively sample an action $A_n \sim \pi(X_n)$ according to a behavior policy $\pi$, observe the induced state $X_{n+1}$ and reward $r_{n+1}$, and then update the Q-table according to the formula: 
\[
    Q(X_n,A_n)
        \gets Q(X_n,A_n) +  \rho\left[r_{n+1} + \gamma\max_{a'\in\mathcal{A}}Q(X_{n+1},a')-Q(X_n,A_n)\right]
\]
where $\rho \in (0,1]$ is a learning rate. 

Note that in our case, to be consistent with most of the MFG and MFC literature, {\bf we will minimize costs instead of maximizing rewards}.

The notions of mean field game equilibrium and mean field control optimum will be presented in the following sections. For now, let us present the RL algorithm introduced in~\cite{Andrea20}.

\paragraph{Algorithms. }
We reproduce \cite[Algorithm 1]{Andrea20} in Algorithm~\ref{algo:U2MFQL} for MFG and in Algorithm~\ref{algo:U2MFQL-MFC} for MFC. In \cite{Andrea20}, the algorithm was presented with several episodes to help the learning process. Here, to simplify the presentation, we use a single episode. Also, to make clear the differences between MFG and MFC we write two different algorithms. They differ from the ratio of the learning rates as well as from the fact that in the MFC case one keep track of a trajectory $(X_n^{(x,a)})$ as well as a distribution $(\mu_n^{(x,a)})$ for each $(x,a)$. At the beginning of the episode, an initial state $X_0$ is provided by the environment. The distribution according to which it is sampled does not matter if the total number of time steps $N_{steps}$ is large enough. Instead of a fixed number of steps, one could stop after a certain stopping criterion is achiever. One possibility is to stop when $ \lvert|{\mu_{n+1}-\mu_{n}}\rvert|_1 \leq tol_{\mu}$ and $\|Q_{n+1}-Q_n\|_{1,1}<tol_Q$, for some predefined tolerances $tol_{\mu}$ and $tol_Q$. 

The most important parameters of this algorithm are the learning rates. Our analysis will cover a wider range of possible parameters, but let us recall that in~\cite{Andrea20} the authors chose learning rates of the following specific form, inspired by the RL literature:
\begin{align*} %
    \rho^Q_{n,x,a}=\frac{1}{(1+\nu(x,a,n))^{\omega^Q}},\hspace{20pt}\rho^{\mu}_n=\frac{1}{(1+(n+1))^{\omega^{\mu}}}
\end{align*}
where $\omega^Q$ and $\omega^\mu$ are two positive constants, and $\nu(x,a,n)=\sum_{m=0}^{n}\mathbbm{1}_{\{(X_m,A_m)=(x,a)\}}$ is the number of times the process $(X_n,A_n)$ visits state $(x,a)$ up to time $n$. In fact, in this paper we consider learning rates of the population distribution which also depend on $\nu(x,a,n)$:
\begin{align*} %
    \rho^\mu_{n,x,a}=\frac{1}{(1+\nu(x,a,n))^{\omega^\mu}},
\end{align*}
allowing comparison of the learning rates at each point $(x,a)$ by only comparing the two numbers $\omega^Q$ and $\omega^\mu$. For the choice of an action, we use a $\softmin_\phi$ regularization (see \eqref{softmin} in Appendix \ref{app3} for its definition).

\begin{algorithm}[H]
\caption{Two-timescale Mean Field Game Q-learning \label{algo:U2MFQL}} 
\begin{algorithmic}[1]
\REQUIRE 
$N_{steps}$: number of steps; $\phi$: parameter for $\softmin$ policy; $(\rho^Q_{n,x,a})_{n,x,a}$: learning rates for the value function; $(\rho^\mu_{n,x,a})_{n,x,a}$: learning rates for the population distribution: MFG: $\rho^Q_{n,X_n,A_n}>\rho^{\mu}_{n,X_n,A_n}$
\STATE \textbf{Initialization}: $Q_{0}(x,a) =0$ and $\mu_{0}=[\frac{1}{|\mathcal{{X}}|},...,\frac{1}{|\mathcal{{X}}|}]$ for all $(x,a)\in \mathcal{{X}}\times \mathcal{{A}}$.

\STATE \textbf{Observe} $X_{0}\sim \mu_0$ \\

\STATE
{\textbf{Update mean field: } 
$\mu_{0} = \boldsymbol{\delta}(X_{0})$}
\\

\FOR{$n=0,1,2,\dots, N_{steps}-1$}

\STATE \textbf{Choose action } $A_{n} \sim \softmin_\phi Q_n(X_{n},\cdot)$ and observe the new state 
\[
X_{n+1} \sim p(\cdot|X_{n}, A_{n}, \mu_{n})
\]
provided by the environment %
\\

\STATE\textbf{Update population distributions: } 
\[
\mu_{n+1} = \mu_{n} + \rho^{\mu}_{n,X_n,A_n} (\boldsymbol{\delta}(X_{n+1}) - \mu_{n})
\]

\STATE \textbf{Observe } the cost $f_{n+1}=f\left(X_{n},A_{n},\mu_{n}\right)$

\STATE
\textbf{Update value function: } $Q_{n+1}(x,a) = Q_{n}(x,a)$ for all $(x,a) \neq (X_n,A_n)$, and 
    \[
    Q_{n+1}(X_n,A_n)
        = Q_n(X_n,A_n) +  \rho^Q_{n,X_n,A_n}\left[f_{n+1} + \gamma\min_{a'\in\mathcal{A}}Q_n(X_{n+1},a')-Q_n(X_n,A_n)\right]
    \]

\ENDFOR
\STATE {\textbf{Return}} $(\mu_{N_{steps}},Q_{N_{steps}})$
\end{algorithmic}
\end{algorithm}

\begin{algorithm}[H]
\caption{Two-timescale Mean Field Control Q-learning \label{algo:U2MFQL-MFC}} 
\begin{algorithmic}[1]
\REQUIRE 
$N_{steps}$: number of steps; $\phi$: parameter for $\softmin$ policy; $(\rho^Q_{n,x,a})_{n,x,a}$: learning rates for the value function; $(\rho^\mu_{n,x,a})_{n,x,a}$: learning rates for the population distribution: MFC: $\rho^Q_{n,X_n,A_n}<\rho^{\mu}_{n,X_n,A_n}$
\STATE \textbf{Initialization}: $Q_{0}(x,a) =0$ and $\mu_0^{(x,a)}=[\frac{1}{|\mathcal{{X}}|},...,\frac{1}{|\mathcal{{X}}|}]$ for all $(x,a)\in \mathcal{{X}}\times \mathcal{{A}}$.

\STATE \textbf{Observe} $X^{(x,a)}_{0}\sim \mu^{(x,a)}_0$ for all $(x,a)$\\

\STATE
{\textbf{Update mean fields: } 
$\mu^{(x,a)}_{0} = \boldsymbol{\delta}(X^{(x,a)}_{0})$}
\\

\FOR{$n=0,1,2,\dots, N_{steps}-1$}

\STATE \textbf{Choose action: } Choose $A^{(x,a)}_n = a$ if $X_n^{(x,a)} =x$, otherwise $A^{(x,a)}_{n} \sim \softmin_\phi Q_n(X^{(x,a)}_{n},\cdot)$.

\STATE \textbf{Observe} the new states 
\[
X^{(x,a)}_{n+1} \sim p(\cdot|X^{(x,a)}_{n}, A^{(x,a)}_{n}, \mu^{(x,a)}_{n})
\]

provided by the environment %
\\

\STATE\textbf{Update population distributions: } \[
    \mu^{(x,a)}_{n+1}
        = \mu^{(x,a)}_n +  \rho^\mu_{n,X^{(x,a)}_n,A^{(x,a)}_{n}}(\boldsymbol{\delta}(X^{(x,a)}_{n+1}) - \mu^{(x,a)}_{n}).
    \]

\STATE \textbf{Observe } the costs $f^{(x,a)}_{n+1}=f\left(X^{(x,a)}_{n},A^{(x,a)}_{n},\mu^{(x,a)}_{n}\right)$

\STATE
\textbf{Update value function: } If $X^{(x,a)}_n = x$,
    \[
    Q_{n+1}(x,a)
        = Q_n(x,a) +  \rho^Q_{n,X^{(x,a)}_n,A^{(x,a)}_n}\left[f^{(x,a)}_{n+1} + \gamma\min_{a'\in\mathcal{A}}Q_n(X^{(x,a)}_{n+1},a')-Q_n(x,a)\right]
    \]
    Otherwise, $Q_{n+1} = Q_n $

\ENDFOR
\STATE {\textbf{Return}} $(\mu^{(x,a)}_{N_{steps}},Q_{N_{steps}})$
\end{algorithmic}
\end{algorithm}

Note that the algorithms use the cost function $f$ and the transition distribution $p$ only through evaluations and samples respectively. In other words, the algorithms are \emph{model-free} from the point of view of the representative agent. Since it only uses samples and not expected values, such updates are sometimes referred to as \emph{sample-based updates}. Furthermore, at step $n$, the value function $Q_{n+1}$ differs from $Q_{n}$ at only one state-action pair, so the algorithm is \emph{asynchronous}. 

{\bf Note also that the algorithm differentiates the two cases, MFG and MFC, by the choice of the learning rate parameters $\omega^Q$ and $\omega^\mu$, but also  by the way the cost depends on the population distributions, a single one $\mu_n$ for MFG, and one, $\mu_n^{(x,a)}$, for each $(x,a)$  in the MFC case.}

For the analysis, we will start with an idealized version of the algorithm. We  start by considering a \emph{synchronous} algorithm with \emph{expected updates} in which, at each iteration, every state-action pair is updated using the expectation of the $Q$-learning updates. We will then consider a version with synchronous but sample-based updates. Finally, we will come back to the algorithms described in Algorithm~\ref{algo:U2MFQL} and Algorithm~\ref{algo:U2MFQL-MFC}
which can be viewed as  combinations of classical Q-learning with an estimation of the population distributions.

\section{Mean Field Game}\label{sec:MFG}

\subsection{Mean Field Game Formulation}
In the context of Asymptotic MFG introduced in \cite[Section 2.2]{Andrea20}, we can view the problem faced by an infinitesimal agent among the crowd as an MDP parameterized by the population distribution. Hence, given a population distribution $\mu$, the Q-function for a single agent can be defined as follows:
\begin{align}
    Q^{\pi}_\mu(x,a) := f(x,a,\mu)+\mathbbm{E}\left[\sum_{n\geq 1}\gamma^n f(X_n^{\pi,\mu}, A_n,\mu)\right],
\end{align}
where the expectation is over the randomness of $X^{\pi,\mu}$ and $A$ such that,
\begin{align*}
    \begin{cases}
    X_0^{\pi,\mu} = x, A_0 = a,\\
    X_{n+1}^{\pi,\mu} \sim p(\cdot|X_n^{\pi,\mu}, A_n, \mu),  \qquad n \ge 0\\
    A_n \sim \pi(\cdot|X_n^{\pi,\mu}), \qquad n \ge 0
    \end{cases}
\end{align*}
where $\pi:\mathcal{X}\to \Delta^{|\mathcal{A}|}$ is a policy taking the state $x$ as input and outputting a probability distribution over the action set. 
We denote by $\Pi$ the set of all such policies.

The optimal Q-function  is defined as:
\begin{align}
    Q^*_\mu(x,a) := \inf_{\pi \in \Pi} Q^{\pi}_{\mu}(x,a).
\end{align} 
Since $\mu$ is fixed, Equation (3.20) in \cite{sutton2018reinforcement} can be applied, so that it satisfies the Bellman equation:
\begin{align}\label{Bellman_MFG}
    Q^*_\mu (x,a) = f(x,a,\mu)+\gamma \sum_{x'\in\mathcal{X}}p(x'|x,a,\mu)\min_{a'}Q^*_\mu(x',a'). 
\end{align}
In short, we denote this Bellman equation (\ref{Bellman_MFG}) by $Q^*_\mu = \mathcal{B}_\mu Q^*_\mu$, where for every $\mu$, $\mathcal{B}_\mu$ is defined as: for every $Q: \mathcal{X} \times \mathcal{A} \to \mathbb{R}$,
$$
    \mathcal{B}_\mu Q := f(x,a,\mu)+\gamma \sum_{x'\in\mathcal{X}}p(x'|x,a,\mu)\min_{a'}Q(x',a').
$$ To be more precise, $Q^*_\mu$ is the fixed point of the Bellman operator $\mathcal{B}_\mu$. The Bellman equation~\eqref{Bellman_MFG} implies that there is at least one optimal policy $\pi^*$, i.e., such that:
$$
    Q^{\pi^*}_\mu(x,a) = Q^{*}_\mu(x,a), \qquad \hbox{ for all $(x,a)$.} 
$$
Indeed, we can take for $\pi^*$ any policy such that the support of $\pi^*(\cdot|x)$ is included in the set of actions that minimize $Q^*_\mu(x,\cdot)$.
However, there could be multiple optimal policies.\\

Now we introduce the notion of (mean field) Nash equilibrium. 
\begin{definition}
\label{def:MFGeq}
$(\hat{\mu},\hat{\pi})$ forms a Nash equilibrium if the following two statements hold:
\begin{enumerate}
    \item (best response) $Q^{\hat{\pi}}_{\hat{\mu}} = Q^*_{\hat{\mu}}$ %
    \item (consistency) $\hat{\mu} = \lim_{n\to\infty}\mathcal{L}(X_n^{\hat{\pi},\hat{\mu}})$.
\end{enumerate}
\end{definition}
To be more precise, the first statement means that $\hat{\pi}$ is optimal for a representative infinitesimal player when the population distribution is $\hat{\mu}$, and the second statement means that $\hat{\mu}$ is a fixed point of the function $\hat{\mu} \mapsto \mathrm{P}^{\hat{\pi},\hat{\mu}}\hat{\mu}$ where $\mathrm{P}^{\hat{\pi},\hat{\mu}}$ is the transition kernel according to the $\hat{\mu}$ and policy ${\hat{\pi}}$, see~\eqref{eq:def-Ptransitions}.

\subsection{Idealized Two-timescale Iteration Approach}\label{sec:MFGsimple}
In order to solve the problem we defined above (Definition~\ref{def:MFGeq}), let us first introduce an idealized deterministic two-timescale approach. We consider the following iterative procedure, where both variables are updated at each iteration but with different rates, denoted by $\rho^\mu_n \in \mathbbm{R}_+$ and $\rho^Q_n \in \mathbbm{R}_+$. Starting from an initial guess $(Q_0,\mu_0)\in\mathbbm{R}^{|\mathcal{X}|\times|\mathcal{A}|}\times \Delta^{|\mathcal{X}|}$,  define iteratively for $n = 0,1,\dots$:
\begin{align}\label{simpletwotimescaleapproach_mfg}
    \begin{cases}
        \mu_{n+1} = \mu_n + \rho_n^{\mu}\mathcal{P}_2(Q_n,\mu_n),\\
        Q_{n+1} = Q_n + \rho^Q_n\mathcal{T}_2(Q_n,\mu_n),
    \end{cases}
\end{align}
where\footnote{The $\softmin_\phi$ function and the dynamics depending on a distribution vector are defined in the Appendix.} the functions $\mathcal{P}_2: \mathbb{R}^{|\mathcal{X}|\times|\mathcal{A}|} \to \Delta^{|\mathcal{X}|}$ and $\mathcal{T}_2: \mathbb{R}^{|\mathcal{X}|\times|\mathcal{A}|} \to \mathbb{R}^{|\mathcal{X}| \times |\mathcal{A}|}$ are defined by:
\begin{align*}
    \begin{cases}
        \mathcal{P}_2(Q,\mu)(x) = \sum_{x'}\mu(x')p(x|x',\softmin_\phi Q(x'),\mu)- \mu(x),\\
        \mathcal{T}_2(Q,\mu)(x,a) = f(x,a,\mu) + \gamma\sum_{x'}p(x'|x,a,\mu)\min_{a'}Q(x',a')-Q(x,a).
    \end{cases}
\end{align*}
These two functions can also be expressed in the following way where $\mathrm{P}^{\pi,\mu}$ is defined in~\eqref{eq:def-Ptransitions}:
\begin{align*}
    \begin{cases}
    \mathcal{P}_2 (Q,\mu) = \mu\mathrm{P}^{\softmin_\phi Q, \mu} - \mu,\\
        \mathcal{T}_2 (Q,\mu) = \mathcal{B}_{\mu}Q - Q.
    \end{cases}
\end{align*}
Our proof requires the functions $\mathcal{T}_2$ and $\mathcal{P}_2$ to be Lipschitz continuous. In order to obtain this property, we use $\softmin_\phi$ instead of $\argmin$ under function $\mathcal{P}_2$ and we require Lipschitz continuity of $f$. By~\cite{Borkar97}, we will obtain that, if $\rho^\mu_n/\rho^Q_n\to 0$ as $n\to+\infty$, the solution $(\mu_n,Q_n)_n$ to the above iterations~\eqref{simpletwotimescaleapproach_mfg} has a behavior that is described (in a sense to be made precise later) by the following system of ODEs, in the regime where $\epsilon$ is small and $t$ goes to infinity:
\begin{align}\label{MFGODE}
\begin{cases}
    \dot{\mu}_t = \mathcal{P}_2(Q_t,\mu_t),\\
    \dot{Q}_t = \frac{1}{\epsilon}\mathcal{T}_2(Q_t,\mu_t),
\end{cases}
\end{align}
where $\rho^\mu_n/\rho^Q_n$ is thought of being of order $\epsilon \ll 1$.

\subsection{Convergence of the Idealized Two-timescale Iteration Scheme}
In this section, we will establish the convergence of the idealized two-timescale approach defined in \eqref{simpletwotimescaleapproach_mfg}.

We introduce the following assumption.
\begin{assumption}\label{fp_lipschitz}
    The cost function $f$ is bounded and is Lipschitz with respect to $\mu$, with Lipschitz constant denoted by $L_f$ when using the $L^1$ norm.
    The transition kernel $p(\cdot|\cdot,\cdot,\mu)$ is also Lipschitz with respect to $\mu$, with Lipschitz constant denoted by $L_p$ when using the $L^1$ norm. In other words, for every $x,a,\mu,\mu'$,
    \begin{align*}
        |f(x,a,\mu) - f(x,a,\mu')| 
        &\le L_f \|\mu - \mu'\|_1 = L_f \sum_x |\mu(x) - \mu'(x)|,
        \\ 
        \sum_{x'}|p(x'|x,a,\mu) - p(x'|x,a,\mu')| = \|p(\cdot|x,a,\mu)-p(\cdot|x,a,\mu')\|_1 
        &\le L_p \|\mu - \mu'\|_1 = L_p \sum_x |\mu(x) - \mu'(x)|.
    \end{align*}
\end{assumption}
Under Assumption \ref{fp_lipschitz}, we now show that the functions $\mathcal{P}_2$ and $\mathcal{T}_2$ are Lipschitz continuous.
\begin{proposition}\label{PLipschitz}
If Assumption~\ref{fp_lipschitz} holds, then
    the function $(Q,\mu) \mapsto \mathcal{P}_2(Q,\mu)$ is Lipschitz with respect to both $Q$ and $\mu$: 
\begin{align}\label{LipP2Q}
        \|\mathcal{P}_2(Q,\mu) - \mathcal{P}_2(Q',\mu)\|_{\infty}
\leq \phi|\mathcal{A}|\|Q-Q'\|_\infty,
    \end{align}
     and
     \begin{align}\label{LipP2mu}
        \|\mathcal{P}_2(Q,\mu) - \mathcal{P}_2(Q,\mu')\|_{\infty}
         \leq (L_p + 2-|\mathcal{X}|c_{min}^\phi)\|\mu-\mu'\|_1
        \leq (L_p + 2-|\mathcal{X}|c_{min})\|\mu-\mu'\|_1,
    \end{align}
     where
     \begin{align}
            c_{min}^\phi:= &\min_{x,x',Q,\mu} \mathrm{P}^{\softmin_\phi Q,\mu}(x',x) = \min_{x,x',Q,\mu}\sum_a {\softmin}_{\phi} Q(x',a)p(x|x',a,\mu)\nonumber\\
        &\geq \min_{x,x',a,\mu} p(x|x',a,\mu)\sum_a {\softmin}_{\phi} Q(x',a) =\min_{x,x',a,\mu} p(x|x',a,\mu)=: c_{min}\label{def:cmin}
        \end{align}
    \end{proposition}
\begin{proof}
First, we note that for two policies $\pi$ and $\pi'$, we have:
\begin{align}
        \|\mathrm{P}^{\pi, \mu}\mu - \mathrm{P}^{\pi', \mu}\mu\|_\infty
        &\leq\sum_{x}|\sum_{x'} \mu(x')\sum_a \pi(a|x')p(x|x',a,\mu)-\pi'(a|x') p(x|x',a,\mu))| \notag \\
        &\leq\sum_{x'} \mu(x')|\sum_a\sum_{x}(\pi(a|x')p(x|x',a,\mu)-\pi'(a|x')p(x|x',a,\mu))| \notag \\
        &\leq \sum_{x'} \mu(x') \sqrt{\sum_a 1^2}\|\pi(x')-\pi'(x')\|_2 \notag \\
        &\leq |\mathcal{A}|^{1/2} \sum_{x'} \mu(x') \|\pi(x')-\pi'(x')\|_2
        \notag 
        \\
        &\le |\mathcal{A}|^{1/2} \max_{x'} \|\pi(x')-\pi'(x')\|_2. 
        \label{diff-P-policies}
\end{align}  
Applying this with $\pi(x) = {\softmin}_\phi Q(x)$ and $\pi'(x) = {\softmin}_\phi Q'(x)$, we obtain:
\begin{align*}
        \|\mathcal{P}_2(Q,\mu) - \mathcal{P}_2(Q',\mu)\|_{\infty} &=\|\mu\mathrm{P}^{\softmin_\phi Q, \mu} - \mu\mathrm{P}^{\softmin_\phi Q', \mu}\|_\infty\\
        &\leq |\mathcal{A}|^{1/2} \max_{x'} \|{\softmin}_\phi Q(x')-{\softmin}_{\phi} Q'(x')\|_2\\
        &\leq|\mathcal{A}|^{\frac{1}{2}}\phi\max_{x'} \|Q(x')-Q'(x')\|_2\\
        &\leq|\mathcal{A}|\phi\max_{x'} \|Q(x')-Q'(x')\|_\infty\\
        &\leq \phi|\mathcal{A}|\|Q-Q'\|_\infty.
\end{align*}    
For $\mathcal{P}_2$, we first make the following remark: we can find $p(i,j)>\epsilon$, so that $1-N\epsilon>0$, and then define $q(i,j)=(p(i,j)-\epsilon)/(1-N\epsilon)$. Thus, $P = (1-N\epsilon)Q+\epsilon J$, where $J$ is the $N\times N$ matrix with all entries 1. We use this fact to calculate the total variation. Hence we have: 
\begin{align*}
        \|\mathcal{P}_2(Q,\mu) - \mathcal{P}_2(Q,\mu')\|_{\infty} &\leq \|\mu\mathrm{P}^{\softmin_\phi Q, \mu}-\mu'\mathrm{P}^{\softmin_\phi Q, \mu'}\|_\infty + \|\mu-\mu'\|_\infty \\
        &\leq \|\mu\mathrm{P}^{\softmin_\phi Q, \mu}-\mu'\mathrm{P}^{\softmin_\phi Q, \mu})\|_1 + \|\mu'\mathrm{P}^{\softmin_\phi Q, \mu}-\mu'\mathrm{P}^{\softmin_\phi Q, \mu'}\|_1+\|\mu-\mu'\|_1\\
        &\leq(1-|\mathcal{X}|c_{min}^\phi)\|\mu-\mu'\|_1 + L_p\|\mu-\mu'\|_1+\|\mu-\mu'\|_1\\
        &\leq (L_p + 2-|\mathcal{X}|c_{min}^\phi)\|\mu-\mu'\|_1\\
        &\leq (L_p + 2-|\mathcal{X}|c_{min})\|\mu-\mu'\|_1.
\end{align*}
\end{proof}
\begin{proposition}\label{TLipschitz}
    If Assumption~\ref{fp_lipschitz} holds,
    then the function $(Q,\mu) \mapsto \mathcal{T}_2(Q,\mu)$ is Lipschitz with respect to both $Q$ and $\mu$.
\end{proposition}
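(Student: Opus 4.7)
The function $\mathcal{T}_2(Q,\mu) = \mathcal{B}_\mu Q - Q$ splits into a running-cost piece, a discounted expectation-of-minimum piece, and the linear $-Q$ piece. I would treat the $Q$-dependence and the $\mu$-dependence separately, using the elementary inequality $|\min_{a'} Q(x',a') - \min_{a'} Q'(x',a')| \le \max_{a'} |Q(x',a') - Q'(x',a')| \le \|Q-Q'\|_\infty$ as the key nonsmooth tool, and the two hypotheses in Assumption~\ref{fp_lipschitz} for the $\mu$-dependence.

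\textbf{Lipschitzness in $Q$ at fixed $\mu$.} Fix $(x,a)$ and subtract coordinate-wise. Since $f(x,a,\mu)$ does not depend on $Q$, the running-cost terms cancel. The expectation-of-minimum term is bounded by
\begin{align*}
\gamma \sum_{x'} p(x'|x,a,\mu)\,\bigl|\min_{a'} Q(x',a') - \min_{a'} Q'(x',a')\bigr|
\le \gamma \|Q-Q'\|_\infty,
\end{align*}
using that $p(\cdot|x,a,\mu)$ sums to $1$ and the $\min$ inequality above. Adding the $|Q(x,a)-Q'(x,a)| \le \|Q-Q'\|_\infty$ term from the $-Q$ piece and taking the supremum over $(x,a)$ yields
\begin{align*}
\|\mathcal{T}_2(Q,\mu) - \mathcal{T}_2(Q',\mu)\|_\infty \le (1+\gamma)\|Q-Q'\|_\infty.
\end{align*}

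\textbf{Lipschitzness in $\mu$ at fixed $Q$.} Again fix $(x,a)$. The $-Q(x,a)$ term cancels. The running-cost difference is bounded by $L_f \|\mu-\mu'\|_1$ by Assumption~\ref{fp_lipschitz}. For the remaining term, write
\begin{align*}
\gamma \Bigl|\sum_{x'} \bigl(p(x'|x,a,\mu) - p(x'|x,a,\mu')\bigr)\min_{a'} Q(x',a')\Bigr|
\le \gamma \|Q\|_\infty \sum_{x'} |p(x'|x,a,\mu) - p(x'|x,a,\mu')|
\le \gamma L_p \|Q\|_\infty \|\mu-\mu'\|_1,
\end{align*}
again by Assumption~\ref{fp_lipschitz}. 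Hence
\begin{align*}
\|\mathcal{T}_2(Q,\mu) - \mathcal{T}_2(Q,\mu')\|_\infty \le \bigl(L_f + \gamma L_p \|Q\|_\infty\bigr)\|\mu-\mu'\|_1.
\end{align*}

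\textbf{Expected obstacle.} The only subtlety is that the Lipschitz constant in $\mu$ involves $\|Q\|_\infty$, so $\mathcal{T}_2$ is only \emph{locally} Lipschitz in $(Q,\mu)$ jointly. This is not a real problem for the two-timescale analysis since, as in~\cite{Borkar97}, the relevant iterates are shown (or assumed) to stay in a bounded region --- the boundedness of $f$ together with the contraction of $\mathcal{B}_\mu$ gives an a priori $L^\infty$ bound on $Q$ along the $Q$-ODE of order $\|f\|_\infty/(1-\gamma)$. I would therefore either state the proposition with a constant depending on a bound $R$ for $\|Q\|_\infty$, or, following the usage in the sequel, restrict attention to iterates inside a fixed ball in $Q$-space and take that bound into the Lipschitz constant. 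No other step requires work beyond the triangle inequality and the two displayed estimates above.
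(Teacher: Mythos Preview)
Your proof is correct and matches the paper's argument essentially line for line: the same $\min$-inequality and probability-mass-one bound for the $Q$-variable giving $(1+\gamma)\|Q-Q'\|_\infty$, and the same splitting into the $f$-term and the $p$-term for the $\mu$-variable giving $(L_f+\gamma L_p\|Q\|_\infty)\|\mu-\mu'\|_1$ (the paper then multiplies by $|\mathcal{X}|$ to pass to $\|\mu-\mu'\|_\infty$). Your remark about the $\|Q\|_\infty$-dependence making the estimate only locally Lipschitz is exactly how the paper uses it downstream, bounding $\|Q^*_\mu\|_\infty\le \|f\|_\infty/(1-\gamma)$ in Proposition~\ref{eQ_MFG}.
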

\begin{proof}
We note that:
    \begin{align*}
        \|\mathcal{T}_2(Q,\mu) - \mathcal{T}_2(Q',\mu)\|_{\infty} &\leq \|\mathcal{B}_\mu Q - \mathcal{B}_\mu Q'\|_\infty + \|Q-Q'\|_\infty\\
        &\leq \gamma \|\sum_{x'} p(x'|\cdot,\cdot,\mu)(\min_{a'} Q(x',a')-\min_{a'} Q'(x',a'))\|_\infty+\|Q-Q'\|_\infty\\
        &\leq \gamma \|Q-Q'\|_\infty + \|Q-Q'\|_\infty\\
        &\leq (\gamma + 1)\|Q-Q'\|_\infty,
    \end{align*}
    where we used the fact that $$\|\sum_{x'} p(x'|\cdot,\cdot,\mu)(\min_{a'} Q(x',a')-\min_{a'} Q'(x',a'))\|_\infty \le \sup_{x,a}  \sum_{x'} p(x'|x,a,\mu)\|(\min_{a'} Q(x',a')-\min_{a'} Q'(x',a'))\|_\infty,$$ with $p(x'|x,a,\mu) \le 1$ and $\|(\min_{a'} Q(x',a')-\min_{a'} Q'(x',a'))\|_\infty \le \|Q- Q'\|_\infty$. Hence:
    \begin{align*}
        \|\mathcal{T}_2(Q,\mu) - \mathcal{T}_2(Q,\mu')\|_{\infty} &\leq \|f(\cdot,\cdot,\mu) - f(\cdot,\cdot,\mu')\|_\infty + \|\gamma\sum_{x'}|p(x'|\cdot,\cdot,\mu)-p(x'|\cdot,\cdot,\mu')||\min_{a'} Q(x',a')|\|_\infty\\
        &\leq (L_f + \gamma L_p\|Q\|_\infty)\|\mu-\mu'\|_1\\
        &\leq |\mathcal{X}|(L_f + \gamma L_p\|Q\|_\infty)\|\mu-\mu'\|_\infty.
    \end{align*}
\end{proof}
Now that we have these Lipschitz properties, we show that the second ODE for $Q_t$ in the system \eqref{MFGODE} has a unique global asymptotically stable equilibrium (GASE).
\begin{proposition}\label{eQ_MFG}
    If Assumption \ref{fp_lipschitz} holds, then for any given $\mu$, the ODE $\dot{Q}_t = \mathcal{T}_2(Q_t,\mu)$ has a unique GASE, that we will denote by $Q^*_\mu$. Moreover,  $Q^*_\mu:\Delta^{|\mathcal{X}|}\to\mathbbm{R}^{|\mathcal{X}|\times|\mathcal{A}|}$ is Lipchitz.%
\end{proposition}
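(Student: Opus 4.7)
The plan is to exploit the classical $\gamma$-contraction property of the Bellman operator. Fix $\mu \in \Delta^{|\mathcal{X}|}$. First I would observe that $\mathcal{B}_\mu : \mathbbm{R}^{|\mathcal{X}|\times|\mathcal{A}|} \to \mathbbm{R}^{|\mathcal{X}|\times|\mathcal{A}|}$ is a $\gamma$-contraction in the sup norm, since $\gamma \in (0,1)$, $p(\cdot|x,a,\mu)$ is a probability measure, and $|\min_{a'} Q(x',a') - \min_{a'} Q'(x',a')| \le \|Q-Q'\|_\infty$ for every $x'$. Banach's fixed-point theorem then yields a unique fixed point $Q^*_\mu$, which is precisely the unique zero of $\mathcal{T}_2(\cdot,\mu)$ and hence the only equilibrium of the ODE $\dot Q_t = \mathcal{T}_2(Q_t,\mu) = \mathcal{B}_\mu Q_t - Q_t$.

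To upgrade existence-uniqueness to global asymptotic stability, I would estimate the error $e_t := Q_t - Q^*_\mu$ directly. Rewriting the ODE as $\dot e_t + e_t = \mathcal{B}_\mu Q_t - \mathcal{B}_\mu Q^*_\mu$ and applying the integrating factor $e^t$ coordinatewise gives
\begin{equation*}
    e_t = e^{-t}\, e_0 + \int_0^t e^{-(t-s)} \bigl(\mathcal{B}_\mu Q_s - \mathcal{B}_\mu Q^*_\mu\bigr)\, ds.
\end{equation*}
Taking $\|\cdot\|_\infty$ and invoking the contraction property yields $\|e_t\|_\infty \le e^{-t}\|e_0\|_\infty + \gamma \int_0^t e^{-(t-s)} \|e_s\|_\infty\, ds$. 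Gr\"onwall's inequality applied to $h(t) := e^t \|e_t\|_\infty$ then gives the explicit exponential decay
\begin{equation*}
    \|Q_t - Q^*_\mu\|_\infty \le \|Q_0 - Q^*_\mu\|_\infty\, e^{-(1-\gamma)t},
\end{equation*}
with a rate that is independent of $\mu$, which is the desired GASE property.

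For the Lipschitz regularity of $\mu \mapsto Q^*_\mu$, I would use a standard fixed-point perturbation argument. Writing $Q^*_\mu = \mathcal{B}_\mu Q^*_\mu$ and $Q^*_{\mu'} = \mathcal{B}_{\mu'} Q^*_{\mu'}$ and inserting $\mathcal{B}_\mu Q^*_{\mu'}$,
\begin{equation*}
    \|Q^*_\mu - Q^*_{\mu'}\|_\infty \le \gamma \|Q^*_\mu - Q^*_{\mu'}\|_\infty + \|\mathcal{B}_\mu Q^*_{\mu'} - \mathcal{B}_{\mu'} Q^*_{\mu'}\|_\infty.
\end{equation*}
The last term is bounded by $\bigl(L_f + \gamma L_p \|Q^*_{\mu'}\|_\infty\bigr)\|\mu-\mu'\|_1$, exactly as already carried out in the proof of Proposition~\ref{TLipschitz}. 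Since Assumption~\ref{fp_lipschitz} gives $|f| \le M$ for some $M$, iterating the Bellman equation produces the uniform bound $\|Q^*_{\mu'}\|_\infty \le M/(1-\gamma)$, and rearranging yields $\|Q^*_\mu - Q^*_{\mu'}\|_\infty \le \frac{L_f + \gamma L_p M/(1-\gamma)}{1-\gamma}\|\mu-\mu'\|_1$.

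The main delicate point is the GASE step: the sup norm is not differentiable along trajectories, so I would not try to compute $\frac{d}{dt}\|e_t\|_\infty$ directly; instead I rely on the integral form of the ODE combined with Gr\"onwall. An alternative would be a smooth Lyapunov function built from a high-order $p$-norm followed by $p \to \infty$, but the integrating-factor route is cleaner and already furnishes the explicit rate $1-\gamma$. Everything else reduces to routine fixed-point estimates once the Bellman contraction is identified.
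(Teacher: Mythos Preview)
Your proof is correct and in its Lipschitz half essentially coincides with the paper's argument: both insert an intermediate Bellman term, invoke the $\gamma$-contraction, use the second Lipschitz estimate from Proposition~\ref{TLipschitz}, and close with the a priori bound $\|Q^*_\mu\|_\infty \le \|f\|_\infty/(1-\gamma)$, arriving at the same constant up to the choice of norm on $\mu-\mu'$.

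The GASE portion is where you take a different route. The paper simply records the $\gamma$-contraction of $\mathcal{B}_\mu$ and then cites external results (the contraction mapping theorem in \cite{SELL197342} for existence of the equilibrium, and \cite[Theorem 3.1]{563625} for the fact that the ODE $\dot Q_t = \mathcal{B}_\mu Q_t - Q_t$ converges to it). You instead give a self-contained argument: rewrite the error ODE, apply the integrating factor $e^t$, and use Gr\"onwall on $h(t)=e^t\|e_t\|_\infty$ to obtain the explicit decay $\|Q_t-Q^*_\mu\|_\infty \le e^{-(1-\gamma)t}\|Q_0-Q^*_\mu\|_\infty$. This is more elementary than invoking the cited theorems and yields a quantitative rate (uniform in $\mu$) that the paper's proof does not extract; the trade-off is that the paper's citation-based approach is shorter and defers the analytic details to standard references. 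Your remark about avoiding differentiation of $\|e_t\|_\infty$ is well taken and is exactly why the integral form is the right vehicle here.
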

\begin{proof}We will use in the proof the fact that, by definition of the $Q$ table, $\|Q^*_{\mu}\|_\infty \leq \frac{1}{1-\gamma}\|f\|_\infty$. 
    By the procedure we used to prove Proposition \ref{TLipschitz}, we have $ \|\mathcal{B}_\mu Q - \mathcal{B}_\mu Q'\|_\infty \leq \gamma\|Q-Q'\|_\infty$, where $\gamma < 1$ implies that $Q \mapsto \mathcal{B}_\mu Q$ is a strict contraction. As a result, by the contraction mapping theorem \cite{SELL197342}, a unique GASE exists. Furthermore by \cite[Theorem 3.1]{563625}, $Q_t$ will converge to it. We denote this unique GASE by $Q^*_\mu$. Then,
    \begin{align*}
        \|Q^*_\mu - Q^*_{\mu'}\|_\infty &= \|\mathcal{B}_\mu Q^*_\mu - \mathcal{B}_{\mu'} Q^*_{\mu'}\|_\infty\\
        &\leq \|\mathcal{B}_\mu Q^*_\mu - \mathcal{B}_{\mu'} Q^*_{\mu}\|_\infty + \|\mathcal{B}_{\mu'} Q^*_\mu - \mathcal{B}_{\mu'} Q^*_{\mu'}\|_\infty\\
        &\leq \|\mathcal{T}_2(Q^*_\mu,\mu) - \mathcal{T}_2(Q^*_\mu,\mu')\|_\infty + \gamma \|Q^*_{\mu}-Q^*_{\mu'}\|_\infty\\
        &\leq |\mathcal{X}|(L_f + \gamma L_p\|Q^*_\mu\|_\infty)\|\mu-\mu'\|_\infty + \gamma \|Q^*_{\mu}-Q^*_{\mu'}\|_\infty\\
        &\leq |\mathcal{X}|(L_f + \frac{\gamma}{1-\gamma} L_p\|f\|_\infty)\|\mu-\mu'\|_\infty + \gamma \|Q^*_{\mu}-Q^*_{\mu'}\|_\infty.
    \end{align*}
    As a result, we have $\|Q^*_\mu - Q^*_{\mu'}\|_\infty \leq \frac{L_f|\mathcal{X}| + \frac{\gamma}{1-\gamma} L_p\|f\|_\infty|\mathcal{X}|}{1-\gamma}\|\mu-\mu'\|_\infty$. So $Q^*_\mu$ is uniformly Lipschitz with respect to $\mu$.
\end{proof}
In what follows we make the additional assumptions:
\begin{assumption}\label{mfclp}
    We assume $L_p < |\mathcal{X}|c_{min}$, in particular $c_{min}>0$, where $c_{min}$ is defined in  \eqref{def:cmin}.
\end{assumption}
\begin{assumption}\label{GASE}
The first ODE in system \eqref{MFGODE} along the GASE of the second ODE, that is $\dot{\mu}_t = \mathcal{P}_2(Q^*_{\mu_t},\mu_t)$, has a unique GASE, that we will denote by $\mu^{*\phi}$.
\end{assumption}

In Appendix \ref{GASEproof}, we show how to derive the existence and uniqueness of such a GASE using a sufficient condition on $\phi$ which allows us to employ a contraction argument. However, this argument requires an upper bound on the choice of the parameter $\phi$ which limits its use in our context of deriving the convergence of our algorithm to a solution of the MFG problem. Therefore, in what follows we work under Assumption \ref{GASE} which could alternatively be verified by means of Lyapunov functions for instance.

We can now introduce our first theorem, which guarantees the convergence of the idealized two-timescale approach. It relies on the following assumptions about the update rates:
\begin{assumption}\label{squaresumlearningrate_mfg} The learning rate $\rho^Q_n$ and $\rho^{\mu}_n$ are sequences of positive real numbers satisfying
\begin{align*}
    \sum_n \rho^Q_n = \sum_n \rho^{\mu}_n = \infty, \qquad \sum_n |\rho^Q_n|^2 + |\rho^{\mu}_n|^2 <\infty,
    \qquad \rho^\mu_n/\rho^Q_n \xrightarrow[n\to+\infty]{} 0.
\end{align*}
\end{assumption}
\begin{theorem}\label{thMFGcvg}
    Under Assumptions~\ref{fp_lipschitz}, \ref{mfclp}, \ref{GASE} and \ref{squaresumlearningrate_mfg}, $(\mu_n, Q_n)$ defined in \eqref{simpletwotimescaleapproach_mfg} converges  as $n \to\infty$. We denote the limit by $(\mu^{*\phi},Q^*_{\mu^{*\phi}})$.
\end{theorem}
\begin{proof}
    With Assumptions \ref{fp_lipschitz}, \ref{mfclp}, \ref{GASE} and \ref{squaresumlearningrate_mfg}, and the results of Propositions \ref{PLipschitz}, \ref{TLipschitz}, and \ref{eQ_MFG}, the assumptions of \cite[Theorem 1.1]{Borkar97} are satisfied. This result guarantees the convergence in the statement.
\end{proof}
Next, we show that the limit point is an approximation of the MFG solution. For that matter, we introduce the additional assumption:

\begin{assumption}\label{contraction}
We assume the following system of equations has a unique solution:
    \begin{align}
    \label{eq:mfg-argmin}
        \begin{cases}
            \hat{\mu}=\hat{\mu}\mathrm{P}^{\hat\pi, \hat{\mu}}\\
            Q^*_{\hat{\mu}} = Q^{\hat\pi}_{\hat{\mu}}\\
            \hat\pi = \argmine Q^*_{\hat{\mu}}.   
        \end{cases}
    \end{align}
\end{assumption}

\begin{remark}
    In the literature, the question of non-uniqueness of a best response to a given mean field (and in particular, to the equilibrium mean field) is known to be challenging. A usual approach is to regularized the problem by introducing a penalty in a way that helps to ensure uniqueness of the optimal policy, see e.g.~\cite{wang2020reinforcement} in continuous time RL. We refer for instance to~\cite{hadikhanloo2019finite,cui2021approximately,lauriere2022learning,guo2022entropy,anahtarci2023q} in the literature on learning mean field games. For instance, one can incorporate an entropy penalization in the reward, which encourages the policy to be distributed over all the actions, while keeping larger probabilities on the optimal actions. This is related to optimizing over soft-min policies, as we do in this paper. We leave the detailed analysis of the connection between the two approaches for future work.
\end{remark}

The interpretation of~\eqref{eq:mfg-argmin} is the following. 
The first equation says that $\hat{\mu}$ is the stationary distribution obtained when the whole population is using the policy 
    $\hat{\pi} = \argmine Q^*_{\hat{\mu}}$. The second equation says that $Q^*_{\hat{\mu}}$ is the optimal value function of a representative player. Overall, this implies that $(\hat\mu,\hat{\pi})$ form an MFG equilibrium (see Definition~\ref{def:MFGeq}).

\begin{definition}\label{MFGC} Construction of an MFG solution 
     from $(\mu^{*\phi},Q^*_{\mu^{*\phi}})$ obtained in Theorem \ref{thMFGcvg} as the limit of the algorithm \eqref{simpletwotimescaleapproach_mfg}.
     \begin{enumerate}
         \item Set $\pi^* = \argmine Q^*_{\mu^{*\phi}}$
         \item Calculate $\mu^*$ by using $\mu^*=\mu^*\mathrm{P}^{\pi^*, \mu^*}$
         \item For fix $\mu^*$, compute $Q^*$ by using Bellman equation \eqref{Bellman_MFG}.
\end{enumerate}
\end{definition}
\begin{remark}
    If $\argmine Q^*_{\mu^{*\phi}} = \argmine Q^*$, then we have $(\mu^*,Q^*,\pi^*)$ satisfies \eqref{eq:mfg-argmin}. By the uniqueness assumed in Assumption~\ref{contraction}, we can conclude $(\mu^*,\pi^*)$ is a solution in the sense of definition \ref{def:MFGeq}.
\end{remark}
The following result shows that $(\mu^{*\phi},Q^*_{\mu^{*\phi}})$ are close to $(\mu^*,Q^*)$.

\begin{theorem}\label{th:accuracyMFG}
    Suppose Assumptions~\ref{fp_lipschitz}, \ref{mfclp}, \ref{GASE}, \ref{squaresumlearningrate_mfg} and \ref{contraction} hold.
Let $(\hat{\mu}, Q^*_{\hat\mu})$ be the solution of~\eqref{eq:mfg-argmin}.
    Let $\delta(\phi)$ be the action gap defined as $\delta(\phi) = \min_{x\in\mathcal{X}}(\min_{a\notin \argmin_a Q^*_{\mu^{*\phi}}}Q^*_{\mu^{*\phi}}(x,a) -\min_a Q^*_{\mu^{*\phi}}(x,a)) > 0$, and $\delta(\phi) =\infty$ if $Q^*_{\mu^{*\phi}}(x)$ is constant with respect to $a$ for each $x$. 
    Then:  
    \begin{align}\label{mfgerror}
    \begin{cases}
        \|\mu^{*\phi} - \mu^*\|_1
        \leq 
        \frac{2|\mathcal{A}|^{\frac{3}{2}}\exp(-\phi\delta(\phi))}{|\mathcal{X}|c_{min}-L_p}
        \\
        \|Q^*_{\mu^{*\phi}} - Q^*\|_\infty
        \leq 
        \frac{(L_f + \frac{\gamma}{1-\gamma} L_p\|f\|_\infty)2|\mathcal{A}|^{\frac{3}{2}}\exp(-\phi\delta(\phi))}{(1-\gamma)(|\mathcal{X}|c_{min}-L_p)}.
    \end{cases}
    \end{align}
\end{theorem}

\begin{proof}
    Using~\eqref{diff-P-policies} for the first term of the second inequality below, we have:
\begin{align*}
     \|\mu^{*\phi} - \mu^*\|_1 &= \|\mu^{*\phi}\mathrm{P}^{\softmin_\phi Q^{*\phi}_{\mu^{*\phi}},\mu^{*\phi}} - \mu^*\mathrm{P}^{\argmine Q^{*\phi}_{\mu^{*\phi}},\mu^*}\|_1 \\
&\leq\|\mu^{*\phi}\mathrm{P}^{\softmin_\phi Q^{*\phi}_{\mu^{*\phi}},\mu^{*\phi}} - \mu^{*\phi}\mathrm{P}^{\argmine Q^{*\phi}_{\mu^{*\phi}},\mu^{*\phi}}\|_1+\|\mu^{*\phi}\mathrm{P}^{\argmine Q^{*\phi}_{\mu^{*\phi}},\mu^{*\phi}} - \mu^*\mathrm{P}^{\argmine Q^{*\phi}_{\mu^{*\phi}},\mu^*}\|_1 \\
&\leq|\mathcal{A}|^{\frac{1}{2}}\max_{x\in\mathcal{X}}\|{\softmin}_\phi Q^*_{\mu^{*\phi}}(x)-\argmine Q^*_{\mu^{*\phi}}(x)\|_2 + (L_p+1-|\mathcal{X}|c_{min})\|\mu^{*\phi} - \mu^*\|_1  \\
    &\leq
    2|\mathcal{A}|^{\frac{3}{2}}\exp(-\phi\delta(\phi))+(L_p+1-|\mathcal{X}|c_{min})\|\mu^{*\phi} -\mu^*\|_1 ,
\end{align*}
where the first term in the last inequality comes from~\cite[Lemma 7]{guo2019learning}, which bounds the distance between $\softmin_\phi$ and $\argmine$. 
Consequently, we obtain the first inequality in~\eqref{mfgerror}.

Similarly, 
\begin{align*}
    \|Q^*_{\mu^{*\phi}} - Q^*\|_\infty &= \|\mathcal{B}_{\mu^{*\phi}} Q^*_{\mu^{*\phi}} - \mathcal{B}_{\mu^*} Q^*\|_\infty \\
    &\leq \|\mathcal{B}_{\mu^{*\phi}} Q^*_{\mu^{*\phi}} - \mathcal{B}_{\mu^{*\phi}} Q^*\|_\infty + \|\mathcal{B}_{\mu^{*\phi}} Q^* - \mathcal{B}_{\mu^*} Q^*\|_\infty \\
    &\leq \gamma \|Q^*_{\mu^{*\phi}} - Q^*\|_\infty + (L_f + \frac{\gamma}{1-\gamma} L_p\|f\|_\infty)\|\mu^{*\phi}-\mu^*\|_1,
\end{align*}
so that we obtain the second inequality in~\eqref{mfgerror}. 
\end{proof}
We are now ready for the main result of this section.
\begin{theorem} \label{th:mainMFG}
Under
Assumptions~\ref{fp_lipschitz}, \ref{mfclp}, \ref{GASE}, \ref{squaresumlearningrate_mfg}, \ref{contraction}, and additionally $\delta = \liminf_{\phi\to\infty}\delta(\phi)>0$, 
the distribution and policy $(\mu^*,\pi^*)$ introduced in Definition \ref{MFGC}  form a solution to the MFG problem (Definition \ref{def:MFGeq}).
\end{theorem}
\begin{proof}
From algorithm \eqref{simpletwotimescaleapproach_mfg} we have, for all $(x,a)$, 
\[
    Q^*_{\mu^{*\phi}}(x,a)=f(x,a,\mu^{*\phi})+\gamma\sum_{x'}p(x'|x,a,\mu^{*\phi}) \min_{a'} Q^*_{\mu^{*\phi}}(x,a'), 
\]
and by definition of $\pi^*$, 
\[
    Q^*_{\mu^{*\phi}}(x,a) > \min_{a'} Q^*_{\mu^{*\phi}}(x,a')\quad \text{for}\quad a\notin \mbox{support}(\pi^*(x)).
\]
The difference between the left-hand side and the right-hand side above is greater than the gap $\delta(\phi)$. Under the condition that the errors in 
\eqref{mfgerror} are small enough (by choosing  $\phi$ large enough), we can replace $Q^*_{\mu^{*\phi}}$ by $Q^*$ and $\mu^{*\phi}$ by $\mu^*$ and obtain:
\[
Q^*(x,a) = f(x,a,\mu^{*})+\gamma\sum_{x'}p(x'|x,a,\mu^{*})\min_{a'} Q^*(x,a') > \min_{a'} Q^*(x,a')\quad \text{for}\quad a\notin \mbox{support}(\pi^*(x)),
\] 
i.e. $\argmine Q^*= \argmine Q^*_{\mu^{*\phi}}$, so that $(\mu^*,\pi^*)$ is indeed an MFG solution.
\end{proof}

\section{Mean Field Control}\label{sec:MFC}

We still use the notations introduced in Section~\ref{sec:notations} but we introduce a different notion of Q-function.

\subsection{Mean Field Control Formulation}\label{sec:MFCformulation}
In the context of Asymptotic MFC introduced in \cite[Section 2.2]{Andrea20}, we can consider modified Q-functions and population distributions. For an admissible policy $\pi$, we define the McKean--Vlasov-dynamics, MKV-dynamics for short, by $p(x'|x,a,\mu^\pi)$ so that $\mu^\pi$ is the limiting distribution of the associated process $(X_n^\pi)$. We define the policy $\tilde{\pi}^{(x,a)}$ by
\begin{align}\label{alphacontrol}
    \tilde{\pi}^{(x,a)} (x') := \begin{cases}
        \delta_a\hspace{3mm} \text{if}\hspace{5mm} x'=x, \\
        \pi(x)\hspace{2mm} \text{for}\hspace{3mm} x'\neq x.
    \end{cases} 
\end{align}
Note that the policy $\tilde{\pi}^{(x,a)}$ depends on $(x,a)$ although most of the time we will omit  this explicit dependence  in the notation. 
Then, the modified Q-function is given by
\begin{align}\label{MFCQ}
    Q^\pi(x,a) := f(x,a,\mu^{\tilde{\pi}}) + \mathbbm{E}\left[\sum_{n\geq 1}\gamma^n f(X_n^{\pi}, A_n,\mu^\pi)\right],
\end{align}
where, for a given $(x,a)$, $\mu^{\tilde{\pi}}$ is the limiting distribution of $X^{\tilde{\pi}}_n$, and where
the expectation is along the Markovian dynamics
\begin{align}\label{mfc_dynamic}
    \begin{cases}
    X_0^{\pi} = x, A_0 = a,\\ 
    X_{n+1}^{\pi} \sim p(\cdot|X_n^{\pi}, A_n, \mu^\pi),\\
    A_n \sim {\pi}(X_n^{\pi}).
    \end{cases}
\end{align}
The optimal Q-function  is defined as:
\begin{align}
    Q^*(x,a) := \inf_{\pi} Q^{\pi}(x,a).
\end{align} 

\begin{assumption}\label{uniqueness minimum}
    We assume that there exists a unique policy $\pi^*$ such that: $Q^*(x,a) := Q^{\pi^*}(x,a)$ for every $(x,a)$. 
    We also assume that for every $x \in \mathcal{X}$, $\{a \in \mathcal{A} : Q^*(x,a) = \min_{a'} Q^*(x,a')\}$ is a singleton, whose element is a pure control denoted by $\alpha^*(x) = \argmin_{a'} Q^*(x,a') \in \mathcal{A}$. In other words, the policy $\pi^*$ is a pure control $\alpha^*$.
\end{assumption}

Under Assumption \ref{uniqueness minimum}, by Theorem 2 in \cite{Andrea20}, $Q^*$ satisfies the McKean--Vlasov Bellman equation:
\begin{align}\label{Bellman_MFC}
    Q^* (x,a) = f(x,a,\tilde{\mu}^*)+\gamma \sum_{x'\in\mathcal{X}}p(x'|x,a,\tilde{\mu}^*)\min_{a'}Q^*(x',a'), 
\end{align}
where 
$\tilde{\mu}^*:=\mu^{\tilde{\pi}^*}$ is the invariant distribution associated to the policy $\tilde\pi^*$ which implicitly depends on $(x,a)$ and is defined by (\ref{alphacontrol}).

In short, we denote the Bellman equation (\ref{Bellman_MFC}) by $Q^* = \mathcal{B} Q^*$, where $\mathcal{B}$ is defined for every $Q^{\pi}$  and the limiting distribution $\mu^{\tilde\pi}$ induced by the policy $\tilde{\pi}$ by:
\begin{equation}\label{eq:BQ}
    \mathcal{B} Q^{\pi}(x,a) := f(x,a,\mu^{\tilde\pi})+\gamma \sum_{x'\in\mathcal{X}}p(x'|x,a,\mu^{\tilde\pi})\min_{a'}Q^{\pi}(x',a').
\end{equation}

As in the case of MFGs, we introduce the operator $\mathcal{B}_\mu$ defined for every matrix $Q$ and every probability distribution $\mu$, by
\begin{align}\label{def:Bmu}
    \mathcal{B}_\mu Q (x,a) := f(x,a,\mu)+\gamma \sum_{x'\in\mathcal{X}}p(x'|x,a,\mu)\min_{a'}Q(x',a').
\end{align}
So $\mathcal{B}$ given by \eqref{eq:BQ}, and $\mathcal{B}_{\mu}$ are related by:
$\mathcal{B}Q^\pi=\mathcal{B}_{\mu^{\tilde{\pi}}}Q^\pi$.

\begin{definition}\label{MFCV}
    Let $V^\pi : \mathcal{X}\mapsto\mathbbm{R}$ be the value function for the policy $\pi$ defined as: 
    $$
    V^\pi(x) = \mathbbm{E}\left[\sum_{n\geq 0}\gamma^n f(X_n^{\pi}, A_n,\mu^\pi)|X_0^{\pi}=x\right]
    $$
following the dynamics \eqref{mfc_dynamic}.
\end{definition}

If we consider a policy $\pi$ which corresponds to a pure control $\alpha$, we have:
\begin{lemma} \cite[Lemma 3]{Andrea20} Letting $\pi(x) = \delta_{\alpha(x)}$, we have:
\begin{align*} 
    V^{\pi}(x) &= Q^{\pi}(x,\alpha(x)).
  \end{align*}
\end{lemma}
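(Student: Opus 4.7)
The plan is to exploit the fact that, for a pure policy $\pi(x) = \delta_{\alpha(x)}$, forcing the initial action to be $a = \alpha(x)$ is equivalent to drawing $A_0 \sim \pi(X_0)$, and moreover the auxiliary policy $\tilde{\pi}$ reduces to $\pi$ itself at the starting state. Thus the dynamics used to define $Q^{\pi}(x, \alpha(x))$ coincide in distribution with the dynamics used to define $V^{\pi}(x)$, and the $n=0$ term in $Q^{\pi}$ matches the $n=0$ term in $V^{\pi}$.

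Concretely, I would first compare the policies appearing in the mean field arguments. Recall $\tilde{\pi}(x') = \delta_a$ when $x' = x$, and $\tilde{\pi}(x') = \pi(x')$ otherwise. When $\pi(x) = \delta_{\alpha(x)}$ and the initial action is chosen as $a = \alpha(x)$, we get $\tilde{\pi}(x) = \delta_{\alpha(x)} = \pi(x)$, so $\tilde{\pi} \equiv \pi$ as mappings from $\mathcal{X}$ to $\Delta^{|\mathcal{A}|}$. Since $\mu^{\tilde{\pi}}$ is determined by $\tilde{\pi}$ through the MKV dynamics, this yields $\mu^{\tilde{\pi}} = \mu^{\pi}$, so $f(x, \alpha(x), \mu^{\tilde{\pi}}) = f(x, \alpha(x), \mu^{\pi})$.

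Next I would align the trajectories. Under $V^{\pi}(x)$, the process starts at $X_0 = x$ and draws $A_0 \sim \pi(X_0) = \delta_{\alpha(x)}$, so $A_0 = \alpha(x)$ almost surely; under $Q^{\pi}(x, \alpha(x))$, the process starts at $X_0 = x$, $A_0 = \alpha(x)$ by construction, and then for $n \geq 1$ evolves with $X_{n+1} \sim p(\cdot | X_n, A_n, \mu^{\pi})$ and $A_n \sim \pi(X_n)$, exactly as in $V^{\pi}$. Hence the two stochastic processes $(X_n, A_n)_{n\geq 0}$ have the same joint law.

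Because both the mean field arguments and the joint laws of $(X_n,A_n)_{n\geq 0}$ agree, I would conclude by writing out
\begin{equation*}
    V^{\pi}(x) = f(x,\alpha(x),\mu^{\pi}) + \mathbbm{E}\left[\sum_{n\geq 1}\gamma^n f(X_n^{\pi},A_n,\mu^{\pi})\right] = Q^{\pi}(x,\alpha(x)),
\end{equation*}
where the second equality is the definition of $Q^{\pi}$ after substitution $\mu^{\tilde{\pi}} = \mu^{\pi}$. There is no real obstacle here; the only point deserving care is the verification that $\tilde{\pi} = \pi$ in the pure-policy case so that the mean field terms line up, which is precisely why the lemma is stated for $\pi = \delta_{\alpha}$ rather than a general mixed policy.
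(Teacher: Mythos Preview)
Your argument is correct. The paper does not actually prove this lemma; it merely cites \cite[Lemma~3]{Andrea20} and states the result without proof, so there is no paper proof to compare against. Your verification---that for a pure policy with $a=\alpha(x)$ one has $\tilde{\pi}\equiv\pi$ and hence $\mu^{\tilde{\pi}}=\mu^{\pi}$, after which the trajectory laws and the $n=0$ cost term coincide---is exactly the natural unpacking of the definitions and supplies the omitted details.
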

In turn, we define the optimal value function and we recall:
\begin{theorem} \cite[Theorem 5]{Andrea20}
\begin{align*} 
 V^*(x) &:=\inf_{\pi} V^{\pi}(x)= \min_a Q^*(x,a).
 \end{align*}
 \end{theorem}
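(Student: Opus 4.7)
The plan is to establish the two matching inequalities $V^*(x) \leq \min_a Q^*(x,a)$ and $V^*(x) \geq \min_a Q^*(x,a)$ separately.

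For the upper bound, the natural candidate is the optimal policy $\pi^*$, which by hypothesis is pure. Combined with Assumption~\ref{uniqueness minimum}, it has the form $\pi^*(x) = \delta_{\alpha^*(x)}$ with $\alpha^*(x) = \arg\min_a Q^*(x,a)$. Applying \cite[Lemma 3]{Andrea20} (reproduced above) to $\pi^*$ gives $V^{\pi^*}(x) = Q^{\pi^*}(x, \alpha^*(x))$, which by optimality of $\pi^*$ (i.e.\ $Q^{\pi^*} = Q^*$) equals $Q^*(x, \alpha^*(x)) = \min_a Q^*(x,a)$. Taking the infimum over $\pi$ on the left then yields $V^*(x) \leq V^{\pi^*}(x) = \min_a Q^*(x,a)$.

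For the lower bound I first handle pure policies: for any $\pi(x) = \delta_{\alpha(x)}$, the same lemma gives $V^\pi(x) = Q^\pi(x, \alpha(x))$, and since $Q^*(x,\cdot) = \inf_{\pi'} Q^{\pi'}(x,\cdot)$ by definition we obtain $V^\pi(x) \geq Q^*(x, \alpha(x)) \geq \min_a Q^*(x,a)$. Hence $\inf_{\pi \text{ pure}} V^\pi(x) \geq \min_a Q^*(x,a)$, with equality attained at $\pi^*$.

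The main obstacle is lifting this to arbitrary (possibly mixed) policies, since mixing could in principle influence the induced limiting distribution $\mu^\pi$ and thereby lower $V^\pi$; Lemma 3 connects $V^\pi$ to $Q^\pi$ only in the pure case. The cleanest route is to invoke the standing hypothesis that the optimal policy is pure: the infimum $V^*(x) = \inf_\pi V^\pi(x)$ is already attained at the pure $\pi^*$, so it coincides with $\inf_{\pi \text{ pure}} V^\pi(x) = \min_a Q^*(x,a)$. As an alternative that avoids appealing to mixed policies altogether, one can observe that both $V^{\pi^*}(x)$ and $\min_a Q^*(x,a)$ satisfy the same contractive fixed-point equation---the Bellman equation~\eqref{Bellman_MFC} evaluated at $a = \alpha^*(x)$, using that $\tilde\mu^* = \mu^{\pi^*}$ since $\tilde\pi^* = \pi^*$ at $(x, \alpha^*(x))$---and conclude equality from the $\gamma$-contraction.
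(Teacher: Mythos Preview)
The paper does not prove this statement: it is quoted verbatim as \cite[Theorem 5]{Andrea20} and simply recalled, so there is no in-paper proof to compare against. What follows is therefore an assessment of your argument on its own merits.

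Your upper bound is clean and correct: Lemma~3 applied to the pure optimal policy $\pi^*$ gives $V^{\pi^*}(x)=Q^{\pi^*}(x,\alpha^*(x))=Q^*(x,\alpha^*(x))=\min_a Q^*(x,a)$, hence $V^*(x)\le \min_a Q^*(x,a)$. Your lower bound over pure policies is also fine.

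The gap is exactly where you flag it. Your proposed resolution---``the standing hypothesis that the optimal policy is pure: the infimum $V^*(x)=\inf_\pi V^\pi(x)$ is already attained at the pure $\pi^*$''---overreads the assumption. In the paper, the sentence ``we assume the optimal policy $\pi^*$ is a pure control policy'' is attached to the definition of $Q^*$, i.e.\ it asserts that $\inf_\pi Q^\pi$ is attained at a pure $\pi^*$. It does \emph{not} directly assert that $\inf_\pi V^\pi$ is attained at a pure policy; using it that way is either circular or a silent strengthening of the hypothesis. Your alternative Bellman-equation route only re-derives $V^{\pi^*}(x)=\min_a Q^*(x,a)$, which is the upper-bound content again; it does not establish $V^\pi(x)\ge \min_a Q^*(x,a)$ for an arbitrary mixed $\pi$. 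To close the gap you would need an argument specific to the MFC structure (e.g.\ the analysis in \cite{Andrea20} that underlies their Theorem~5), since in mean field control the limiting distribution $\mu^\pi$ depends nonlinearly on $\pi$ and the usual ``mixed policies are convex combinations of pure ones'' reasoning from classical MDPs does not apply verbatim.
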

 Note that by taking $\delta_{\alpha^*(x)}=\pi^*(x)$ in \eqref{Bellman_MFC}, one obtains the Bellman equation for  $V^*=V^{\pi^*}$:
 \begin{align} \label{eq:VMFC}
 V^*(x)=f(x,\pi^*(x),\mu^*)+\gamma \sum_{x'\in\mathcal{X}} p(x'|x,\pi^*(x),\mu^*)V^*(x'),
 \end{align}  
noting that $\pi^*$ and $\mu^*$ depend on $V^*$.

\begin{definition}\label{def:MFCsolution}
A pure control policy $\alpha^*$ is a solution to the MFC problem if 
\[
V^*(x):=V^{\alpha^*}(x)=\inf_\pi V^\pi(x).
\]
\end{definition}
\begin{lemma}\label{lemmaCharacterization}
    The solution $\alpha^*$  of the MFC problem can be characterized as follows:
    \begin{enumerate}
        \item 
        Define $Q^*$ by
        \[
        Q^*(x,a)=Q^{\alpha^*}(x,a)=f(x,a,\mu^{\tilde{\alpha}^*})+\gamma\sum_{x'}p(x'|x,a,\mu^{\tilde\alpha^*})V^*(x').
        \]
        \item Check that $\alpha^*(x)=\argmin_a Q^*(x,a)$, so that $Q^*$ satisfies the Bellman equation \eqref{Bellman_MFC}.
    \end{enumerate}
\end{lemma}

\subsection{Idealized Two-timescale Iteration Approach for MFC}

We will solve the problem defined above using again a two-timescale approach as for MFG in Section \ref{sec:MFGsimple}, but this time with learning rates in the regime $\rho^Q_n/\rho^{\mu}_n\to 0$ as $n\to+\infty$. In the MFC case the idealized algorithm should keep track of a vector of probability distributions indexed by $(x,a)$, i.e., there is one distribution $\mu^{(x,a)}$ for each $(x,a)$.

The finite difference equation system for the MFC problem becomes:
\begin{align}\label{simpletwotimescaleapproach_mfc}
    \begin{cases}
        \mu_{n+1}^{(x,a)} = \mu_n^{(x,a)} + \rho_n^{\mu}\tilde{\mathcal{P}}_2^{(x,a)}(Q_n,\mu_n^{(x,a)}),\\
        Q_{n+1}(x,a) = Q_n(x,a) + \rho^Q_n\mathcal{T}_2(Q_n,\mu_n^{(x,a)})(x,a),
    \end{cases}
\end{align}
where the functions $\tilde{\mathcal{P}}^{(x,a)}_2: \mathbb{R}^{|\mathcal{X}|\times|\mathcal{A}|} \to \Delta^{|\mathcal{X}|}$ and $\mathcal{T}_2: \mathbb{R}^{|\mathcal{X}|\times|\mathcal{A}|} \to \mathbb{R}^{|\mathcal{X}| \times |\mathcal{A}|}$ are defined by:
\begin{align*}
    \begin{cases}
\tilde{\mathcal{P}}^{(x,a)}_2(Q,\mu)(y) = \sum_{x'\neq x}\mu(x')p(y|x',{\softmin}_\phi Q(x'),\mu) +\mu(x)p(y|x,a,\mu) - \mu(y),\\
        \mathcal{T}_2(Q,\mu)(x,a) = f(x,a,\mu) + \gamma\sum_{x'}p(x'|x,a,\mu)\min_{a'}Q(x',a')-Q(x,a).
    \end{cases}
\end{align*}
and where the first function can also be written as 
\begin{align*}
    \mathcal{\tilde P}^{(x,a)}_2 (Q,\mu) = \mu\mathrm{\tilde P}_{(x,a)}^{\softmin_\phi Q, \mu} - \mu,
\end{align*}
which defines $\mathrm{\tilde P}_{(x,a)}^{\softmin_\phi Q, \mu}$.

Roughly speaking, by~\cite{Borkar97}, in this regime, the solution $(\mu_n,Q_n)_n$ to the 
above iterations~\eqref{simpletwotimescaleapproach_mfc} has a behavior that is described (in a sense to be made precise later) by the following system of ODEs, in the regime where $\epsilon$ is small and $t$ goes to infinity: 
\begin{align}\label{MFCODE}
\begin{cases}
    {\dot{\mu}^{(x,a)}_t} = \frac{1}{\epsilon}\tilde{\mathcal{P}}^{(x,a)}_2(Q_t,\mu^{(x,a)}_t),\\
    \dot{Q}_t(x,a) = \mathcal{T}_2(Q_t,{\mu}^{(x,a)}_t)(x,a),
\end{cases}
\end{align}
where $\rho^Q_n/\rho^{\mu}_n$ is thought of being of order $\epsilon \ll 1$.

\subsection{Convergence of the Idealized Two-timescale Iteration for MFC}
In this section, we  establish the convergence of the idealized two-timescale approach defined in \eqref{simpletwotimescaleapproach_mfc}. In order to do it, let us introduce our additional assumptions: 
\begin{assumption}\label{squaresumlearningrate_mfc} The learning rate $\rho^Q_n$ and $\rho^{\mu}_n$ are sequences of positive real numbers satisfying
\begin{align*}
    \sum_n \rho^Q_n = \sum_n \rho^{\mu}_n = \infty, \qquad \sum_n |\rho^Q_n|^2 + |\rho^{\mu}_n|^2 <\infty,
    \qquad \rho^Q_n /\rho^{\mu}_n\xrightarrow[n\to+\infty]{} 0.
\end{align*}
\end{assumption}

With Assumption~\ref{fp_lipschitz}, we preserve Proposition~\ref{PLipschitz} and Proposition~\ref{TLipschitz} which become:

\begin{proposition}\label{TPLipschitz}
If Assumption~\ref{fp_lipschitz} holds, then
    the function $(Q,\mu) \mapsto \mathcal{\tilde P}^{(x,a)}_2(Q,\mu)$ is Lipschitz with respect to both $Q$ and $\mu$: 
\begin{align}\label{LipTP2Q}
        \|\mathcal{\tilde P}^{(x,a)}_2(Q,\mu) - \mathcal{\tilde P}^{(x,a)}_2(Q',\mu)\|_{\infty}
\leq \phi|\mathcal{A}|\|Q-Q'\|_\infty,
    \end{align}
and
     \begin{align}\label{LipTP2mu}
        \|\mathcal{\tilde P}^{(x,a)}_2(Q,\mu) - \mathcal{\tilde P}^{(x,a)}_2(Q,\mu')\|_{\infty}
         \leq (L_p + 2-|\mathcal{X}|c_{min})\|\mu-\mu'\|_1.
    \end{align}
    \end{proposition}
\begin{proof}
We have:
\begin{align*}
        \|\mathcal{\tilde P}^{(x,a)}_2(Q,\mu) - \mathcal{\tilde P}^{(x,a)}_2(Q',\mu)\|_{\infty} 
        &= \|\mu\mathrm{\tilde P}_{(x,a)}^{\softmin_\phi Q, \mu} - \mu\mathrm{\tilde P}_{(x,a)}^{\softmin_\phi Q', \mu}\|_\infty\\
    &\leq\|\mu\mathrm{P}^{\softmin_\phi Q, \mu} - \mu\mathrm{P}^{\softmin_\phi Q', \mu}\|_{1}\\
        &\leq \phi|\mathcal{A}|\|Q-Q'\|_\infty,
\end{align*}    
    and,
\begin{align*}
        \|\mathcal{\tilde P}^{(x,a)}_2(Q,\mu) - \mathcal{\tilde P}^{(x,a)}_2(Q,\mu')\|_{\infty} &\leq\|\mu\mathrm{\tilde P}_{(x,a)}^{\softmin_\phi Q, \mu} - \mu'\mathrm{\tilde P}_{(x,a)}^{\softmin_\phi Q, \mu'}\|_\infty + \|\mu-\mu'\|_\infty\\
        &\leq \|\mu\mathrm{P}^{\softmin_\phi Q, \mu}-\mu'\mathrm{P}^{\softmin_\phi Q, \mu'}\|_{1} + \|\mu-\mu'\|_{1} \\
        &\leq (L_p + 2-|\mathcal{X}|c_{min})\|\mu-\mu'\|_1.
\end{align*}
\end{proof}

 Now that we have this Lipschitz property, we show that the ODEs for $\mu^{(x,a)}_t$ in the system \eqref{MFCODE} have  unique global asymptotically stable equilibria (GASE).

\begin{proposition}\label{emu_MFC}
    Suppose Assumption \ref{mfclp} hold. For any given $Q$, ${\dot{\mu}_t} = \tilde{\mathcal{P}}_2(Q,\mu_t)$ has a unique GASE, that will denote by ${\mu}^{*\phi}_Q$ (where we omit the dependency on $(x,a)$). Moreover, ${\mu}_Q^{*\phi}:\mathbbm{R}^{|\mathcal{X}|\times|\mathcal{A}|}\to\Delta^{|\mathcal{X}|}, Q \mapsto \mu^{*\phi}_Q$ is Lipschitz.
\end{proposition}
\begin{proof}
    By Proposition \ref{TPLipschitz}, we have $ \|\mu\mathrm{\tilde P}^{\softmin_\phi Q, \mu}-\mu'\mathrm{\tilde P}^{\softmin_\phi Q, \mu'}\|_1 \leq (L_p+1-|\mathcal{X}|c_{min})\|\mu-\mu'\|_1$, where $L_p + 1-|\mathcal{X}|c_{min} < 1$ implies that $\mu \mapsto \mathrm{\tilde P}^{\softmin_\phi Q, \mu}\mu$ is a strict contraction. As a result, by contraction mapping theorem \cite{SELL197342}, a unique GASE exists and furthermore, by \cite[Theorem 3.1]{563625}, it is the limit of $\mu_t$ denoted by ${\mu}^{*\phi}_Q$ (at each $(x,a)$). Then,
    \begin{align*}
        \|\mu^{*\phi}_Q - \mu^{*\phi}_{Q'}\|_1 &\leq \|\mu^{*\phi}_Q\mathrm{P}^{\softmin_\phi Q, \mu^{*\phi}_Q} - \mu^{*\phi}_{Q'}\mathrm{P}^{\softmin_\phi Q', \mu^{*\phi}_{Q'}}\|_1\\
        &\leq \|\mu^{*\phi}_Q\mathrm{P}^{\softmin_\phi Q, \mu^{*\phi}_Q} - \mu^{*\phi}_{Q'}\mathrm{P}^{\softmin_\phi Q, \mu^{*\phi}_{Q'}}\|_1 + \|\mu^{*\phi}_{Q'}\mathrm{P}^{\softmin_\phi Q, \mu^{*\phi}_{Q'}} - \mu^{*\phi}_{Q'}\mathrm{P}^{\softmin_\phi Q', \mu^{*\phi}_{Q'}}\|_1\\
        &\leq \|\mathcal{P}_2(Q,\mu^{*\phi}_{Q'}) - \mathcal{P}_2(Q',\mu^{*\phi}_{Q'})\|_1 + (L_p+1-|\mathcal{X}|c_{min})\|\mu^{*\phi}_Q - \mu^{*\phi}_{Q'}\|_1\\
        &\leq \phi|\mathcal{A}|\|Q-Q'\|_1 + (L_p+1-|\mathcal{X}|c_{min})\|\mu^{*\phi}_Q - \mu^{*\phi}_{Q'}\|_1.
    \end{align*}
    As a result, we have $\|\mu^{*\phi}_Q - \mu^{*\phi}_{Q'}\|_1 \leq \frac{\phi|\mathcal{A}|}{|\mathcal{X}|c_{min}-L_p}\|Q - Q'\|_1$, which is the uniform Lipschitz property of $\mu^{*\phi}_Q$ with respect to $Q$ for each $(x,a)$. 
\end{proof}

\begin{assumption}\label{GASE-MFC}
The second ODE in the system \eqref{MFCODE} along the GASE of the first ODEs, that is
$\dot{Q}_t = \mathcal{T}_2(Q_t,{\mu^{*\phi}_{Q_t}})$, has a unique GASE that we will denote by $Q^{*\phi}$.
\end{assumption}

In Appendix \ref{GASEproofMFC}, we show how to derive the existence and uniqueness of such a GASE using a contraction argument. However, as in the MFG case, this argument requires an upper bound on the choice of the parameter $\phi$ which limits its use in our context of deriving the convergence of our algorithm to a solution of the MFC problem. Therefore, in what follows we work under Assumption \ref{GASE-MFC}  which could alternatively be verified by means of Lyapunov functions for instance.

Let $V^{*\phi}(x) = \min_{a'} Q^{*\phi}(x,a')$ for all $x\in \mathcal{X}$. From \eqref{simpletwotimescaleapproach_mfc}, we define $V_n(x) = \min_{a'} Q_n(x,a')$ for all $x\in \mathcal{X}$, then we have the following convergence result.

\begin{theorem}\label{thMFCcvg}
    Suppose Assumptions \ref{fp_lipschitz}, \ref{mfclp}, \ref{squaresumlearningrate_mfc} and \ref{GASE-MFC} hold. Then, $(\mu_n, V_n)$ defined above converges to $(\mu^{*\phi}_{Q^{*\phi}},V^{*\phi})$ as $n \to\infty$, where we recall that $\mu_n$ and $\mu^{*\phi}_{Q^{*\phi}}$ depend on $(x,a)$.
\end{theorem}
\begin{proof}
    With Assumption \ref{fp_lipschitz}, \ref{mfclp},  \ref{squaresumlearningrate_mfc}, \ref{GASE-MFC},  and the results of Propositions \ref{PLipschitz}, \ref{TLipschitz}, and \ref{emu_MFC}, the assumptions of \cite[Theorem 1.1]{Borkar97} are satisfied, and therefore  guarantees the convergence in the statement.
\end{proof}

Next, we show that the limit point given by the algorithm, that is $(\mu^{*\phi}_{Q^{*\phi}},V^{*\phi})$, provides an approximation of an MFC solution as follows. 

\begin{definition}\label{MFCC} Construction of an MFC solution:
    \begin{enumerate}
         \item Set $\alpha^*(x) = \argmin_{a'} Q^{*\phi}(x,a')$
         \item 
         For each $(x,a)$, define $\tilde{\alpha}^*$ by \eqref{alphacontrol} and compute $\mu^{\tilde{\alpha}^*}$ solution to
         \[
         \mu(y)=
             \sum_{x'\neq x} \mu (x')  p(y|x',\alpha^*(x'),\mu)+\mu(x)p(y| x,a, \mu),\quad \forall y.
         \]
        \item Compute $V^{\alpha^*}$ by using Definition \ref{MFCV}.
\end{enumerate}
\end{definition}

The following result shows that $(\mu^{*\phi}_{Q^{*\phi}},V^{*\phi})$ are close to $(\mu^{\tilde{\alpha}^*},V^{\alpha^*})$.

\begin{theorem} \label{mfc_main}
Suppose Assumptions \ref{fp_lipschitz}, \ref{mfclp},  \ref{squaresumlearningrate_mfc} and \ref{GASE-MFC} hold.
Let $(\mu^*, V^{\alpha^*})$ given by Definition \ref{MFCC}.
    Let $\delta(\phi)$ be the action gap defined as $\delta(\phi) = \min_{x\in\mathcal{X}}(\min_{a\notin \argmin_a Q^{*\phi}}Q^{*\phi}(x,a) -\min_a Q^{*\phi}(x,a)) > 0$, and $\delta(\phi) =\infty$ if $Q^{*\phi}(x)$ is constant with respect to $a$ for each $x$. 
    Then, 
    \begin{align}\label{mfc_errorphi}
    \begin{cases}
        \|\mu^{*\phi}_{Q^{*\phi}} - \mu^*\|_1
        \leq 
        \frac{2|\mathcal{A}|^{\frac{3}{2}}\exp(-\phi\delta(\phi))}{|\mathcal{X}|c_{min}-L_p},
        \\
        \|V^{*\phi} - V^{\alpha^*}\|_\infty
        \leq 
        \frac{(L_f + \frac{\gamma}{1-\gamma} L_p\|f\|_\infty)2|\mathcal{A}|^{\frac{3}{2}}\exp(-\phi\delta(\phi))}{(1-\gamma)(|\mathcal{X}|c_{min}-L_p)}.
    \end{cases}
    \end{align}
\end{theorem}

\begin{proof}
We follow the same lines as in the proof of Theorem~\ref{th:accuracyMFG}. We have:
\begin{align*}
     \|\mu^{*\phi}_{Q^{*\phi}} - \mu^{\tilde{\alpha}^*}\|_1 %
&\leq|\mathcal{A}|^{\frac{1}{2}}\max_{x\in\mathcal{X}}\|{\softmin}_\phi Q^{*\phi}(x)-\argmin Q^{*\phi}(x)\|_2 + (L_p+1-|\mathcal{X}|c_{min})\|\mu^{*\phi}_{Q^{*\phi}} - \mu^*\|_1  \\
    &\leq
    2|\mathcal{A}|^{\frac{3}{2}}\exp(-\phi\delta(\phi))+
    (L_p+1-|\mathcal{X}|c_{min})\|\mu^{*\phi} -\mu^{\tilde{\alpha}^*}\|_1 ,
\end{align*}
which gives the first inequality in~\eqref{mfc_errorphi}. %

Similarly, 
\begin{align*}
    \|V^{*\phi} - V^{\alpha^*}\|_\infty &= \|\mathcal{B}_{\mu^{*\phi}_{Q^{*\phi}}} V^{*\phi} - \mathcal{B}_{\mu^*} V^{\alpha^*}\|_\infty \\
    &\leq \|\mathcal{B}_{\mu^{*\phi}_{Q^{*\phi}}} V^{*\phi} - \mathcal{B}_{\mu^{*\phi}_{Q^{*\phi}}} V^{\alpha^*}\|_\infty + \|\mathcal{B}_{\mu^{*\phi}_{Q^{*\phi}}} V^{\alpha^*} - \mathcal{B}_{\mu^*} V^{\alpha^*}\|_\infty \\
    &\leq \gamma \|V^{*\phi} - V^{\alpha^*}\|_\infty + (L_f + \frac{\gamma}{1-\gamma} L_p\|f\|_\infty)\|\mu^{*\phi}_{Q^{*\phi}}-\mu^*\|_1,
\end{align*}
which gives the second inequality in~\eqref{mfc_errorphi}. 
\end{proof}
We are now ready for the main result of this section. 
\begin{theorem}\label{th:mainMFC}
Under the assumptions in Theorem \ref{mfc_main}, and additionally  $\delta=\liminf_{\phi\to\infty}\delta(\phi)
>0$,
the pure policy $\alpha^*$ defined by item 1 in Definition \ref{MFCC}  is a solution to the MFC problem characterized in Lemma \ref{lemmaCharacterization}.
\end{theorem}
\begin{proof}
We use the characterization of a solution given by Lemma \ref{lemmaCharacterization}. Define $Q^*$ by
\[
        Q^*(x,a)=Q^{\alpha^*}(x,a)=f(x,a,\mu^{\tilde{\alpha}^*})+\gamma\sum_{x'}p(x'|x,a,\mu^{\tilde\alpha^*})V^{\alpha^*}(x').
        \]
      Then, we need to check that $\alpha^*(x)=\argmin_a Q^*(x,a)$.

From our algorithm \eqref{simpletwotimescaleapproach_mfc} we have 
\[
f(x,a,\mu^{*\phi}_{Q^{*\phi}})+\gamma\sum_{x'}p(x'|x,a,\mu^{*\phi}_{Q^{*\phi}})V^{*\phi}(x') > V^{*\phi}(x)\quad \text{for}\quad a\neq \alpha^*(x).
\]
The difference between the left-hand side and the right-hand side above is larger than the gap $\delta(\phi)$. Under the condition that the errors in 
\eqref{mfc_errorphi} are small enough (by choosing $\phi$ large enough), we can replace $V^{*\phi}$ by $V^{\alpha^*}$ and $\mu^{*\phi}_{Q^{*\phi}}$ by $\mu^{\tilde{\alpha}^*}$ and obtain:
\[
f(x,a,\mu^{\tilde{\alpha}^*})+\gamma\sum_{x'}p(x'|x,a,\mu^{\tilde\alpha^*})V^{\alpha^*}(x') > V^{\alpha^*}(x)\quad \text{for}\quad a\neq \alpha^*(x),
\] 
i.e. $Q^*(x,a)>V^*(x)$ for $a\neq \alpha^*(x)$. Recalling Lemma \ref{lemmaCharacterization}, $\alpha^*$ is indeed an optimal policy.
\end{proof}

\section{Example}\label{sec:example}
We present a very simple example to illustrate the performance of our algorithm with comparison with the explicit solutions for the MFG and MFC problems.

\subsection{The Model}
Let us consider the state space $\mathcal{X}=\{x_0=0,x_1=1\}$ and the action space $\mathcal{A}=\{stay=0,move=1\}$. The markovian dynamics is given by
\[ 
    p(x'|x,a) = 
    \begin{cases}
        1-p, & \hbox{ if } x'=a(x) 
        \\
        p, & \hbox{ if } x'\ne a(x),
    \end{cases}
\] 
where $0<p<1$
represents a small noise parameter, and where as such we impose $0<p<0.5$. Note that the dynamics does not depend on the population distribution $\mu$ and therefore, in the notation of the previous sections, $L_p=0$.

Let us consider the cost function:
\[
   f(x,a,\mu) = x +  c\mu_0,
\]
where $c>0$ and $\mu_0$ is the probability to be at $x_0$ under the distribution $\mu$ (one may think of $\mu_0$ as $1-\bar{\mu}$). Note that 
\[
    |f(x,a,\mu) - f(x,a,\mu')|
    = c|\mu_0 - \mu'_0| 
    \le L_f \|\mu - \mu'\|_1,
\]
with $L_f=c/2$.

Recall that $c_{min}$ is defined in Proposition~\ref{PLipschitz}. In this example we have:
\[
    c_{min}
    = \min_{x,x',a,\mu} p(x'|x,a,\mu)
    = p>0.
\]      
Therefore, in this example Assumptions~\ref{fp_lipschitz} and \ref{mfclp} are satisfied.

In sections \ref{verifMFG} and \ref{verifMFC} we will show that the other assumptions are satisfied.
Thus, in this example, depending on the learning rates satisfying either Assumption~\ref{squaresumlearningrate_mfg} for MFG or Assumption~\ref{squaresumlearningrate_mfc} for MFC, we can apply 
 either Theorem~\ref{th:mainMFG}  or Theorem~\ref{th:mainMFC}.

\subsection{Theoretical Solutions}
There are 4 pure policies $\alpha$ given by: 
$
\{(s,s),(s,m), (m,s), (m,m)\},
$
associated to the corresponding limiting distributions $\mu^\alpha$: 
$
\{(\frac{1}{2},\frac{1}{2}), (1-p,p), (p,1-p), (\frac{1}{2},\frac{1}{2})\}.
$

\subsubsection {MFG}
For a given distribution $\mu$, the Q-Bellman equation reads:
\[
Q^*_\mu(x,a)=f(x,a,\mu)+\gamma\left[ (1-p)Q^*_\mu(a(x),s)\wedge Q^*_\mu(a(x),m) +p Q^*_\mu(1-a(x),s)\wedge Q^*_\mu(1-a(x),m)\right].
\]
One can check that the optimal strategy is $\alpha^*=(s,m)$ which corresponds to the case 
\[
Q^*_\mu(0,s)<Q^*_\mu(0,m),\quad Q^*_\mu(1,m)<Q^*_\mu(1,s).
\]
This is independent of the distribution $\mu$ which affects $Q$ only as an additive shift, so that the fixed point in the MFG problem is achieved by taking $\mu^*=\mu^{\alpha^*}=\mu^{(s,m)}=(1-p,p)$. The optimal Q-values are given by
\[
Q^*(0,s)=\frac{c(1-p)+\gamma p}{1-\gamma}, \quad Q^*(1,m)=Q^*(0,s) +1,
\]
\[
Q^*(0,m)=\frac{c(1-p)+\gamma^2(2p-1)+\gamma(1-p)}{1-\gamma},\quad Q^*(1,s)=Q^*(0,m)+1,
\]
so that $Q^*(0,m)-Q^*(0,s)=Q^*(1,s)-Q^*(1,m)=\gamma (1-2p)>0$ as $0<p<0.5$. 

The computation above shows that there is unique solution to the MFG, so that Assumption \ref{contraction} is satisfied.

\subsubsection{Verification of Assumption \ref{GASE} and lower bound for the gap in Theorem \ref{th:mainMFG}}\label{verifMFG} From above, for any given $\mu$, we have the following statement
\[
Q^*_\mu(0,s)=\frac{c(1-\mu_1)+\gamma p}{1-\gamma}, \quad Q^*_\mu(1,m)=Q^*_\mu(0,s) +1,
\]
\[
Q^*_\mu(0,m)=\frac{c(1-\mu_1)+\gamma^2(2p-1)+\gamma(1-p)}{1-\gamma},\quad Q^*_\mu(1,s)=Q^*_\mu(0,m)+1,
\]
so that $Q^*_\mu(0,m)-Q^*_\mu(0,s)=Q^*_\mu(1,s)-Q^*_\mu(1,m)=\gamma (1-2p)>0$ as $0<p\ll 1$.\\
Then we can compute the soft-min policy $\pi$ defined by $\pi(a|x)={\softmin}_\phi Q^*_\mu(x)(a)$.
\\
It is independent of $\mu$, so that:
\[
\pi(s|0) = \frac{1}{1+\exp(-\phi\gamma (1-2p))},\quad \pi(m|0)= \frac{\exp(-\phi\gamma (1-2p))}{1+\exp(-\phi\gamma (1-2p))},
\]
\[
\pi(s|1) = \frac{\exp(-\phi\gamma (1-2p))}{1+\exp(-\phi\gamma (1-2p))},\quad \pi(m|1)= \frac{1}{1+\exp(-\phi\gamma (1-2p))}.
\]
Then, we  compute the GASE $\mu^{*\phi}$ in Assumption \ref{GASE} by solving $\mathcal{P}_2(Q^*_{\mu^{*\phi}},\mu^{*\phi}) = 0$. Since we have two states, it is enough to write this equation at one point, say $x=1$.
\begin{align*}
    \mu^{*\phi}_1 & = (1-\mu^{*\phi}_1)\left[\pi(s|0)p(1|0,s) + \pi(m|0)p(1|0,m)\right] + \mu^{*\phi}_1\left[\pi(s|1)p(1|1,s) + \pi(m|1)p(1|1,m)\right] \\
          &=  (1-\mu^{*\phi}_1)\left[p\pi(s|0)+ (1-p)\pi(m|0)\right] + \mu^{*\phi}_1\left[ (1-p)\pi(s|1)+p\pi(m|1)\right]\\
          &= (p\pi(s|0)+ (1-p)\pi(m|0)) - \mu^{*\phi}_1(p\pi(s|0)+ (1-p)\pi(m|0)) + \mu^{*\phi}_1((1-p)\pi(s|1)+p\pi(m|1))\\
          & = p\pi(s|0)+ (1-p)\pi(m|0)\\
          &= p\pi(s|0)+ (1-p)(1-\pi(s|0)) \\
          & = (1-p) -(1-2p)\pi(s|0)\\
          &=  (1-p) \left[\frac{\exp(-\phi\gamma (1-2p))}{1+\exp(-\phi\gamma (1-2p))}\right]+p\left[\frac{1}{1+\exp(-\phi\gamma (1-2p))}\right] \in (0,0.5)\quad \mbox{for}\quad p< 0.5 .
\end{align*}
It remains to prove that the unique $\mu^{*\phi}$ defined above is a GASE for $\mathcal{P}_2$.
Let us define the function
\[
g(x) = (1-p) \left[\frac{\exp(-\phi\gamma (1-2p))}{1+\exp(-\phi\gamma (1-2p))}\right]+p\left[\frac{1}{1+\exp(-\phi\gamma (1-2p))}\right] - x,
\]
so that $g(\mu^{*\phi}_1)=0$ for  $\mu^{*\phi}_1\in (0,0.5)$. We have $g(x)> 0$ for $0<x<\mu^{*\phi}_1$ and $g(x)<0$ for $\mu^{*\phi}_1<x<0.5$. Therefore, $L(x)=(x-\mu^{*\phi}_1)^2$ is 
a Lyapunov function for the ODE $\dot{y} = g(y)$ since $L'(y)g(y)<0$ and $L'(\mu^{*\phi}_1)g(\mu^{*\phi}_1)=0$. Repeating the argument with $\mu^{*\phi}_0=1-\mu^{*\phi}_1$, one deduces that $\mu^{*\phi}$ is the unique GASE in Assumption \ref{GASE}.

Regarding the gap in Theorem \ref{th:mainMFG}, we have:
\[\delta(\phi) = \min\left(Q^*_{\mu^{*\phi}}(0,m)-Q^*_{\mu^{*\phi}}(0,s),Q^*_{\mu^{*\phi}}(1,s)-Q^*_{\mu^{*\phi}}(1,m)\right)=\gamma (1-2p) = \delta>0,
\] for $p<0.5.$
\subsubsection{MFC}
As in Section \ref{sec:MFCformulation}, for a strategy $\alpha$, we define the strategy $\tilde{\alpha}^{(x,a)}$ by:
\begin{align*}
    \tilde{\alpha}^{(x,a)} (x') := \begin{cases}
        a\hspace{3mm} \text{if}\hspace{5mm} x'=x, \\
        \alpha(x')\hspace{2mm} \text{for}\hspace{3mm} x'\neq x.
    \end{cases} 
\end{align*}
Then, the MKV-Q-Bellman equation reads:
\[
Q^*(x,a)=f(x,a,\mu^{\tilde{\alpha^*}(x,a)})+\gamma\left[ (1-p)Q^*(a(x),s)\wedge Q^*(a(x),m) +p Q^*(1-a(x),s)\wedge Q^*(1-a(x),m)\right].
\]
One can check that the optimal strategy is $\alpha^*=(m,s)$ which corresponds to the case 
\[
Q^*(0,m)<Q^*(0,s),\quad Q^*(1,s)<Q^*(1,m),
\]
and to the distribution $\mu^*=(p,1-p)$.
The optimal Q-values are given by
\[
Q^*(0,m)=\frac{cp-\gamma p+\gamma}{1-\gamma}, \quad Q^*(1,s)=Q^*(0,m) +1,
\]
\[
Q^*(0,s)=\frac{c}{2}+\gamma\frac{cp-2\gamma p+\gamma +p}{1-\gamma},\quad Q^*(1,m)=Q^*(0,s)+1,
\]
so that $Q^*(0,s)-Q^*(0,m)=Q^*(1,m)-Q^*(1,s)=\frac{1}{2}(1-2p)(c-2\gamma)>0$ as soon as $c>2\gamma$.
Finally, note that $V^*(0)=Q^*(0,m)$ and $V^*(1)=Q^*(1,s)$.

\subsubsection{MFC Assumptions \ref{uniqueness minimum}, \ref{GASE-MFC}, and gap in Theorem \ref{th:mainMFC}} \label{verifMFC}
We prove in Appendix  \ref{MFCoptimal} that \ref{uniqueness minimum} is satisfied. Now, with the same notations as in  Appendix  \ref{MFCoptimal}, we use a $\softmin$  policy $\pi$ defined by $\pi(a|x)={\softmin}_\phi Q(x)(a)$.
For any given $Q$,  the four limiting distributions (denoted $\mu^{*\phi}_Q$ in Proposition \ref{emu_MFC}) are:
\[
\mu_1^{(0,s)} = \frac{p}{1-(1-2p)\pi(s|1)}, \quad \mu_1^{(0,m)} =\frac{1-p}{2-2p-(1-2p)\pi(s|1)},
\]
\[
\mu_1^{(1,s)} = \frac{1-p - (1-2p)\pi(s|0)}{1 - (1-2p)\pi(s|0)}, \quad \mu_1^{(1,m)} =\frac{1-p - (1-2p)\pi(s|0)}{2-2p - (1-2p)\pi(s|0)}.
\]
Then, we compute the GASE $Q^{*\phi}$ in Assumption \ref{GASE-MFC} by solving $\mathcal{T}_2(Q^{*\phi},\mu^{*\phi}_{Q^{*\phi}}) = 0$. By the calculation in Appendix \ref{MFCoptimal}, it is enough to compute under the case $Q^{*\phi}(0,s)>Q^{*\phi}(0,m)$ and $Q^{*\phi}(1,s)<Q^{*\phi}(1,m)$, since the other cases will not reach the GASE. That gives
\begin{align*}
Q^{*\phi}(0,m) &= \frac{c\mu_0^{^{*\phi}(0,m)}+\gamma(1-p)(1-c\mu_0^{^{*\phi}(0,m)} + c\mu_0^{^{*\phi}(1,s)})}{1-\gamma}\\ Q^{*\phi}(1,s)&=Q^{*\phi}(0,m)+1-c\mu_0^{^{*\phi}(0,m)} + c\mu_0^{^{*\phi}(1,s)}\\
Q^{*\phi}(0,s) &= c\mu_0^{^{*\phi}(0,s)}+\gamma\frac{c\mu_0^{^{*\phi}(0,m)}+\gamma(1-p)(1-c\mu_0^{^{*\phi}(0,m)} + c\mu_0^{^{*\phi}(1,s)})}{1-\gamma}+\gamma p(1-c\mu_0^{^{*\phi}(0,m)} + c\mu_0^{^{*\phi}(1,s)})\\
 Q^{*\phi}(1,m)&=Q^{*\phi}(0,s)+1-c\mu_0^{^{*\phi}(0,s)} + c\mu_0^{^{*\phi}(1,m)}
\end{align*}
We have four implicit equations for four unknowns (the $Q$'s), and one can check (by a tedous analysis) that there exists a unique solution.

It remains to prove that the unique $Q^{*\phi}$ defined above is a GASE for $\mathcal{T}_2$. Let us keep other entries for $Q$ the same as for $Q^{*\phi}$ and consider  first $Q(0,m)$. One can check that
\begin{align*}
    \begin{cases}
        \mathcal{T}_2(Q,\mu^{*\phi}_Q)(0,m) - Q(0,m) <0  \mbox{  if  } Q(0,m) > Q^{*\phi}(0,m),\\
        \mathcal{T}_2(Q,\mu^{*\phi}_Q)(0,m) - Q(0,m) >0  \mbox{  if  } Q(0,m) < Q^{*\phi}(0,m).
    \end{cases}
\end{align*}
The same is true for the other entries, therefore, $L(Q) = \|Q-Q^{*\phi}\|_\infty$ is a Lyapunov function for the ODE $\dot{Q} = \mathcal{T}_2(Q,\mu^{*\phi}_Q)$ since $L'(Q)\mathcal{T}_2(Q,\mu^{*\phi}_Q)<0$ and $L(Q^{*\phi})\mathcal{T}_2(Q^{*\phi},\mu^{*\phi}_{Q^{*\phi}}) = 0$. As a result, $Q^{*\phi}$ is the unique GASE in Assumption \ref{GASE-MFC}.

Regarding the gap in Theorem \ref{th:mainMFC}, we have:
\[\delta(\phi) = \min\left(Q^{*\phi}(0,m)-Q^{*\phi}(0,s),Q^{*\phi}(1,s)-Q^{*\phi}(1,m)\right)
\]
and $\lim_{\phi\to\infty} \delta(\phi) = Q^{*}(0,m)-Q^{*}(0,s) = Q^{*}(1,s)-Q^{*}(1,m) = \frac{1}{2}(1-2p)(c-2\gamma)>0$ as soon as $c>2\gamma$ and $p<0.5$.
\subsection{Numerical Results}\label{nr}

We consider the problem defined above by choice of parameters: $ N =2$ states, $ p=0.01$, $c=5$, $ \phi=500$, $ \gamma=0.5$. 
We present the following results for both MFG and MFC.

\subsubsection{MFG Experiments}
In order to test the stability of different learning rates, the solution of MFG problem is reached based on the 4 choices $(\omega^Q, \omega^{\mu}) = (0.55,0.85),(0.55,0.88),(0.55,0.92),(0.55,0.95)$. These choices satisfy the assumptions that we used to prove convergence. Figure~\ref{fig:example-mfg-omegas} illustrates the results.
We provide the convergence plot of $\mu_n(x_0)$, i.e., the value of the distribution at state 0, as a function of the step $n$. The y-axis is the value of $\mu_n(x_0)$, and x-axis is the number of iterations.

\begin{figure}[H]
\center 
\subfloat[MFG: $(\omega^Q, \omega^{\mu})=(0.55,0.85)$]
{\includegraphics[width=0.4\textwidth]{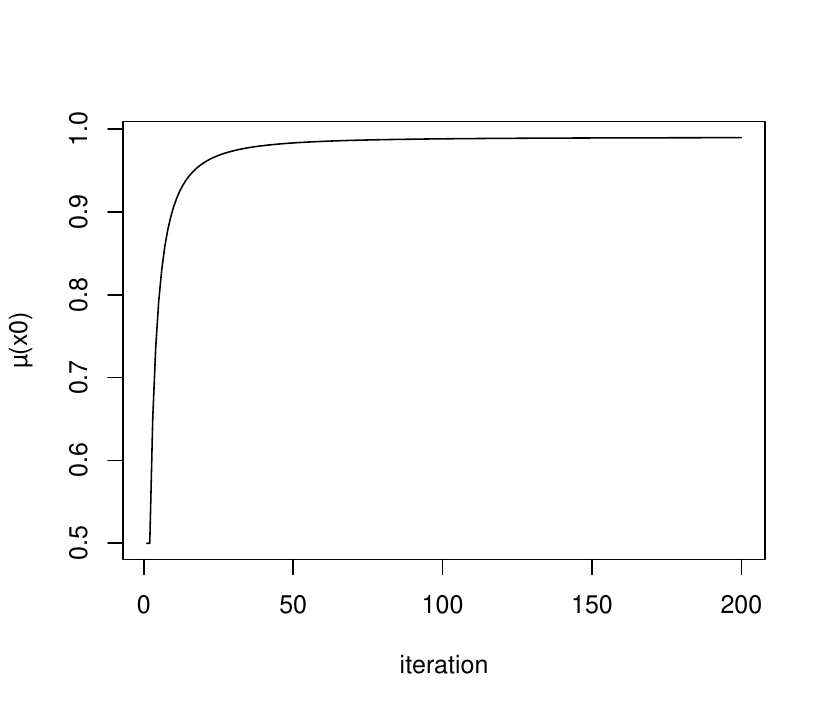}\label{fig:Ex_Im}}\qquad\qquad
\subfloat[MFG: $(\omega^Q, \omega^{\mu})=(0.55,0.88)$]{\includegraphics[width=0.4\textwidth]{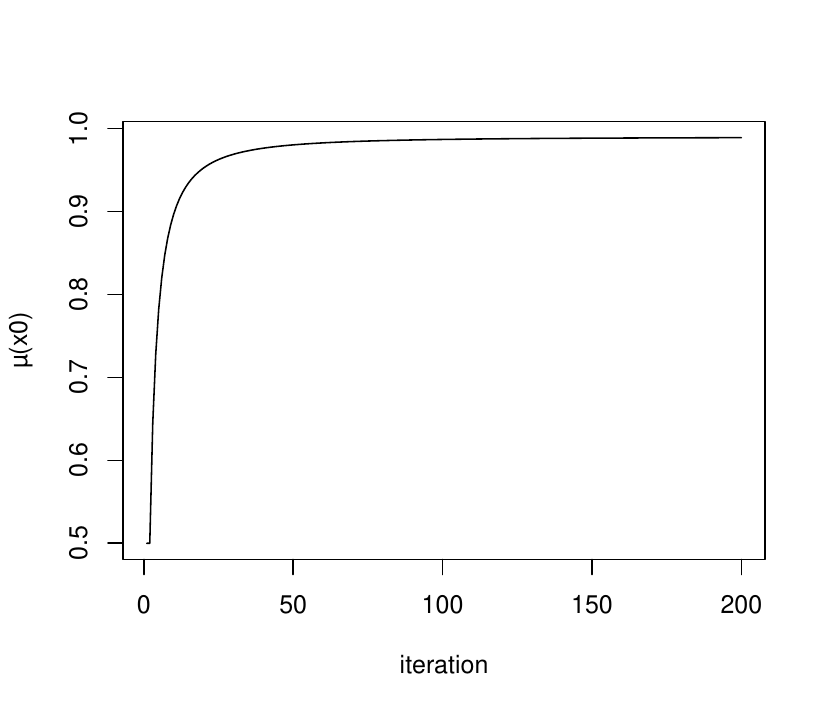}\label{fig:Ex_Im2}}

    \subfloat[MFG: $(\omega^Q, \omega^{\mu})=(0.55,0.92)$]{\includegraphics[width=0.4\textwidth]{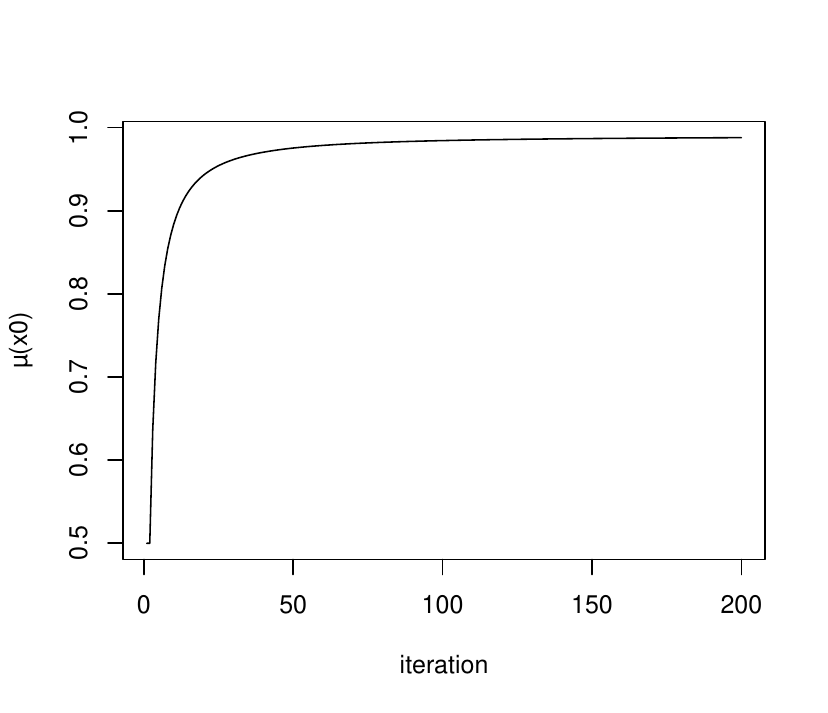}\label{fig:Ex_Im3}}\qquad\qquad
    \subfloat[MFG: $(\omega^Q, \omega^{\mu})=(0.55,0.95)$]{\includegraphics[width=0.4\textwidth]{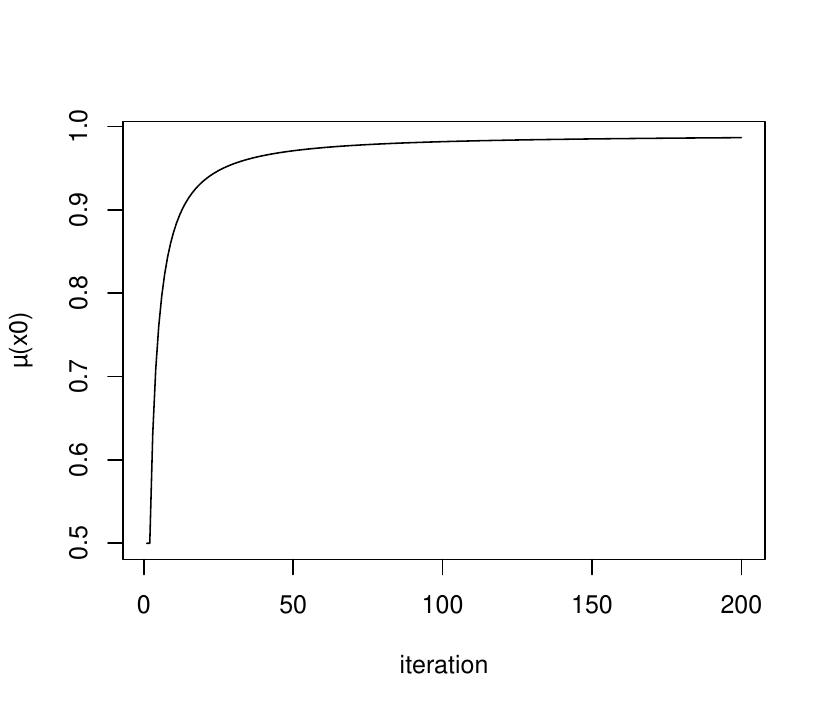}\label{fig:Ex_Im4}}
\caption{MFG setting: Convergence of the distribution. Each graph corresponds to one pair of $(\omega^Q,\omega^\mu)$, as indicated in the sub-caption. The plot represents the value of $\mu_n(x_0)$ as a function of $n$.   \label{fig:example-mfg-omegas}}
\end{figure}
The resulting distribution $\mu=(\mu(x_0),\mu(x_1))$ which is about  $(0.99,0.01)=(1-p,p)$, corresponds to the equilibrium distribution in the  MFG scenario with optimal strategy $\alpha^*=(s,m)$ given by the limiting Q-table:

\begin{center}
\begin{tabular}{ |p{3cm}||p{2cm}|p{2cm}|p{2cm}|p{2cm}|  }
 \hline
 & 
 \multicolumn{2}{|c|}{Q-table} 
 &                                            %
\multicolumn{2}{|c|}{Theoretical Values} \\
 \hline
 States/Actions & Action 0 &Action 1 & Action 0 & Action 1\\
 \hline
 State 0  &9.91 & 10.40  &9.91 &10.4\\
 \hline 
 State 1  & 11.40& 10.91 & 11.40& 10.91    \\

\hline
\end{tabular}
\end{center}

\subsubsection{MFC Experiments}
In order to test the stability of different learning rates, the solution of MFC problem is reached based on the 4 choices $(\omega^Q, \omega^{\mu}) = (0.85,0.55),(0.88,0.55),(0.92,0.55),(0.95,0.55)$. These choices satisfy the assumptions that we used to prove convergence. Figure~\ref{fig:example-mfc-omegas} illustrates the results.
We provide the convergence plot of $\mu^{\alpha^*}_n(x_0)$, i.e., the value of the distribution at state 0, as a function of the step $n$. The y-axis is the value of $\mu^{\alpha^*}_n(x_0)$, and x-axis is the number of iterations.

\begin{figure}[H]
\center 
\subfloat[MFC: $(\omega^Q, \omega^{\mu})=(0.85,0.55)$]{\includegraphics[width=0.4\textwidth]{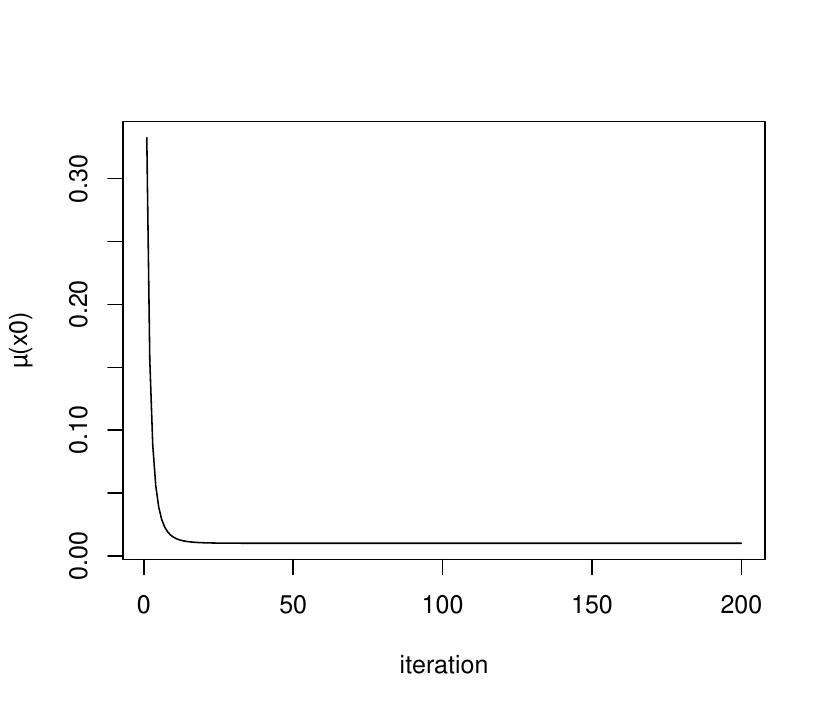}}\qquad\qquad
\subfloat[MFC: $(\omega^Q, \omega^{\mu})=(0.88,0.55)$]{\includegraphics[width=0.4\textwidth]{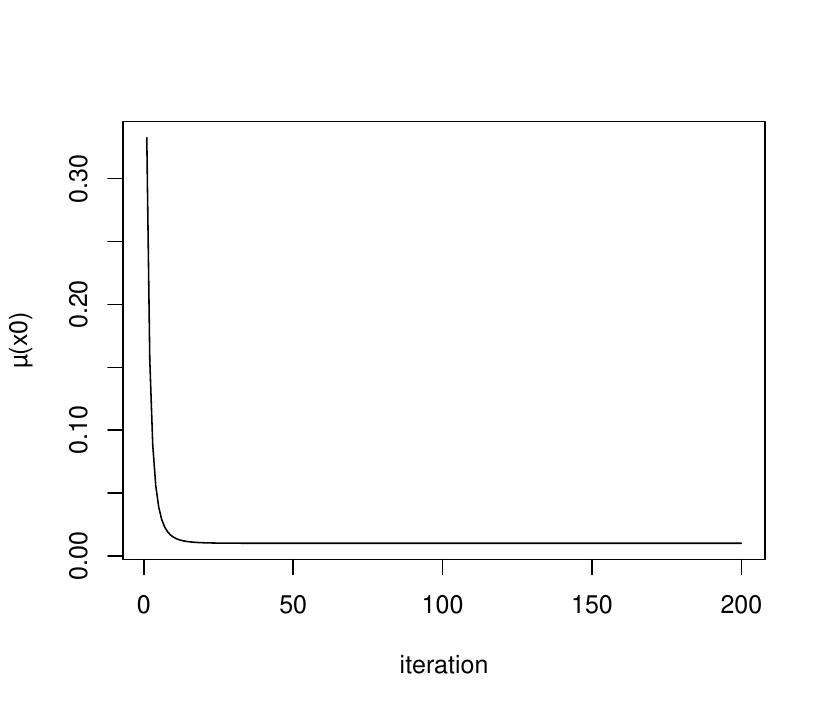}}

    \subfloat[MFC: $(\omega^Q, \omega^{\mu})=(0.92,0.55)$]{\includegraphics[width=0.4\textwidth]{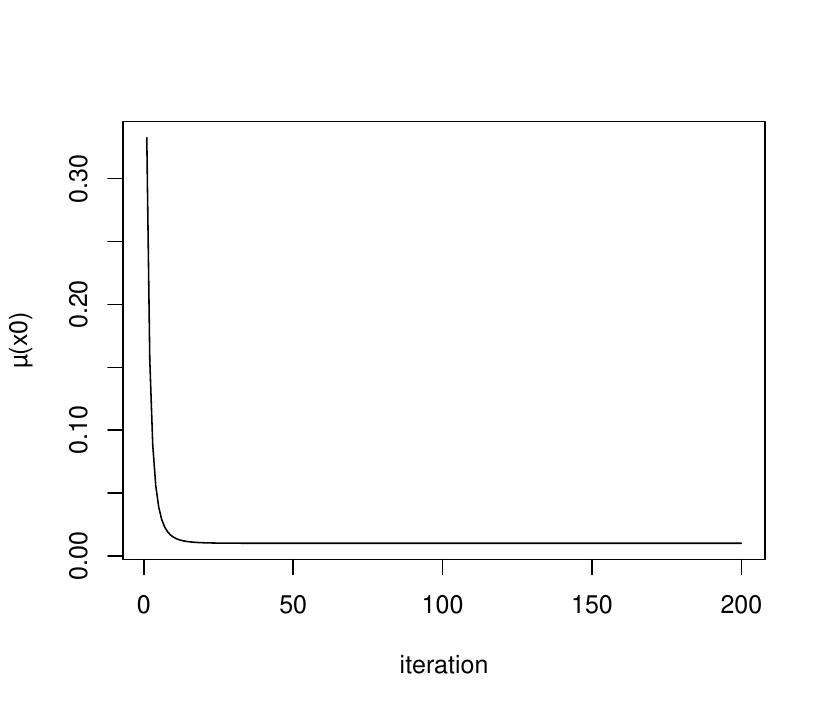}}\qquad\qquad
    \subfloat[MFC: $(\omega^Q, \omega^{\mu})=(0.95,0.55)$]{\includegraphics[width=0.4\textwidth]{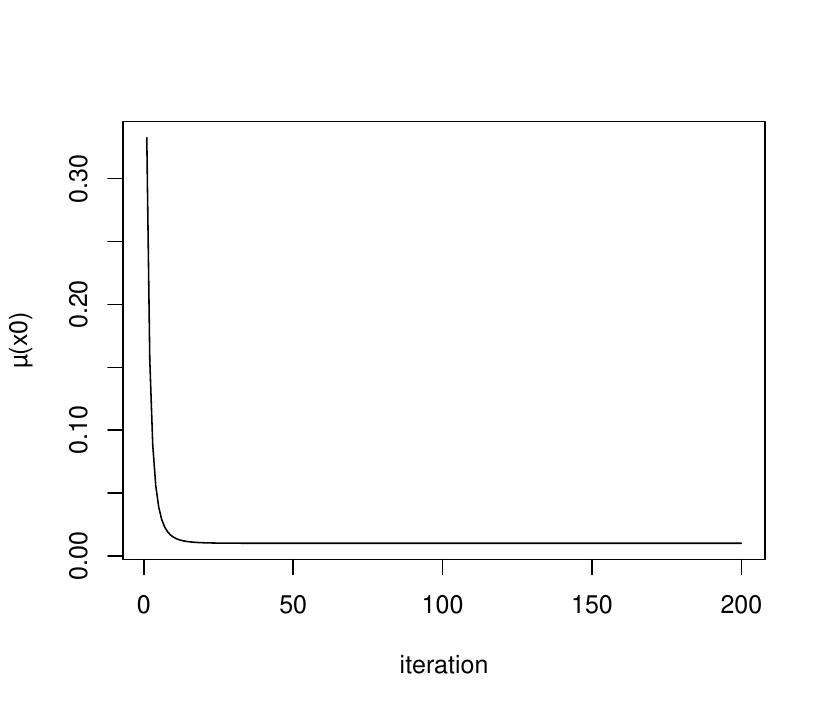}}

\caption{MFC setting: Convergence of the distribution. Each graph corresponds to one pair of $(\omega^Q,\omega^\mu)$, as indicated in the sub-caption. The plot represents the value of $\mu^{\alpha^*}_n(x_0)$ as a function of $n$. \label{fig:example-mfc-omegas}}
\end{figure}
The resulting distribution $\mu=(\mu(x_0),\mu(x_1))$ which is about  $(0.01,0.99)=(p,1-p)$ corresponds to the equilibrium distribution in the  MFC scenario with optimal strategy $\alpha^*=(m,s)$ given by the limiting Q-table:

\begin{center}
\begin{tabular}{ |p{3cm}||p{2cm}|p{2cm}|p{2cm}|p{2cm}|  }
 \hline
 & 
 \multicolumn{2}{|c|}{Q-table }
  &                                            %
\multicolumn{2}{|c|}{Theoretical Values} \\
 \hline
 States/Actions& Action 0 &Action 1& Action 0 &Action 1\\
 \hline
 State 0 &3.052254 & 1.090001 &3.05 &1.09\\
 \hline
 State 1 & 2.090001 & 4.050001 & 2.09 & 4.05\\
 \hline
\end{tabular}
\end{center}

\subsubsection{Robustness with respect to the learning rates}

In this section, we  show the robustness with respect to the choice of learning rate parameters $\omega^Q$ and $\omega^\mu$. In order to show the robustness, we choose $(\omega^Q, \omega^{\mu})$ among \{0.55, 0.6, 0.65, 0.7, 0.75, 0.8, 0.85, 0.9, 0.95\}. 
In Figure \ref{fig:ode-omegas}, we provide the surface plot of the limiting value of $\mu(x_0)$. The z-axis is the value of $\mu(x_0)$, the x-axis and y-axis are $\omega^Q$ and $\omega^\mu$ respectively. In this plot, we force $\mu(x_0) = 0.5$ if $\omega^Q=\omega^\mu$, since it is not part of the region we want to analyse.

\begin{figure}[H]
\center 
\subfloat{\includegraphics[width=0.8\textwidth,height=6cm]{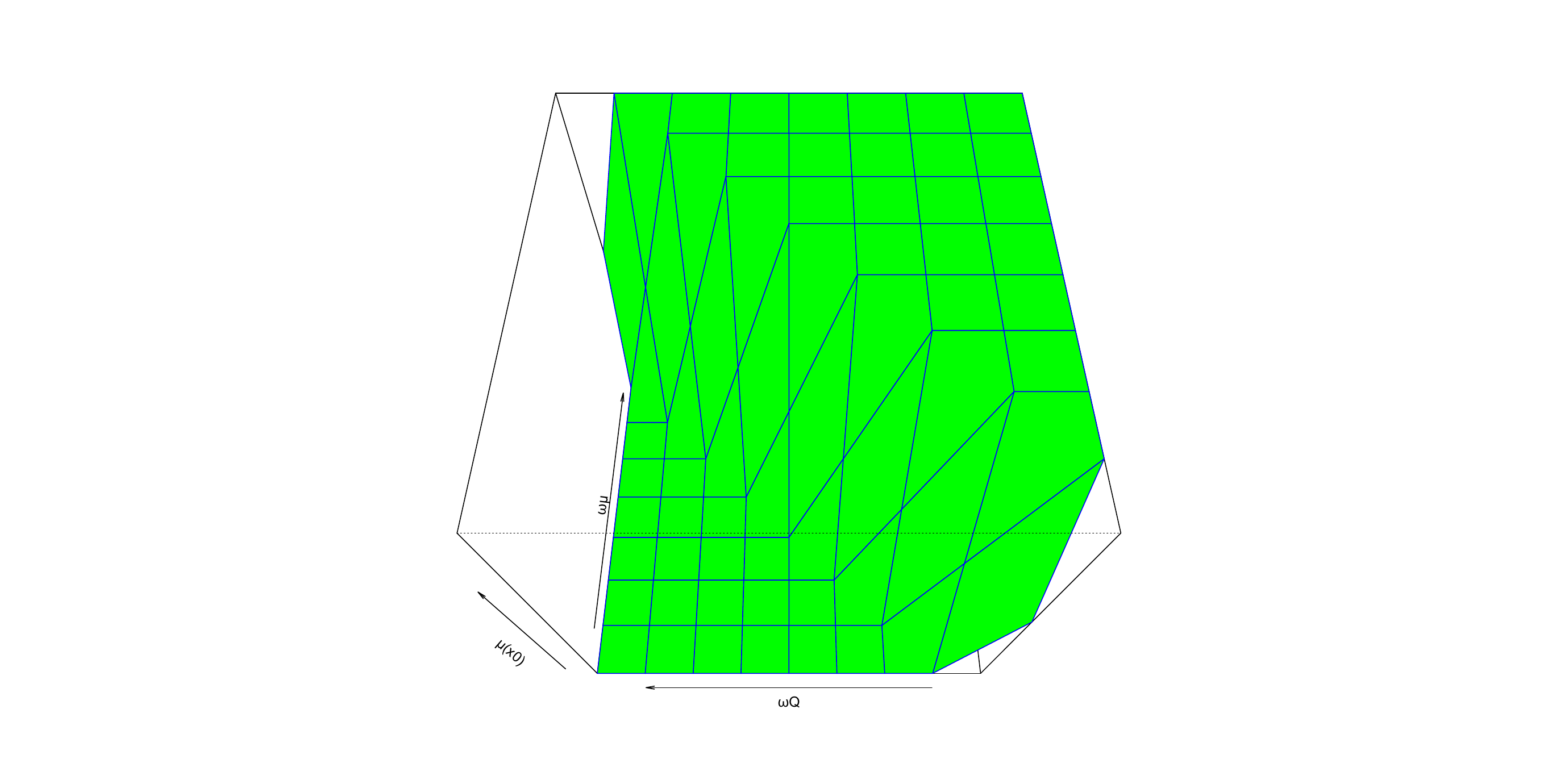}\label{fig:ODEsurface}}
\caption{Robustness of the distribution with respect to the different choices of learning rate parameters $\omega^Q$ and $\omega^\mu$.
For MFG $\omega^\mu>\omega^Q$ and for MFC $\omega^\mu<\omega^Q$.
\label{fig:ode-omegas}}
\end{figure}
 From this plot, one observes the two regimes: MFG and MFC. Within these two regimes, our idealized deterministic two-timescale iteration systems \eqref{simpletwotimescaleapproach_mfg} and \eqref{simpletwotimescaleapproach_mfc} can accurately learn the result  with respect to different choices of learning rate parameters $\omega^Q$ and $\omega^\mu$.

\section{Stochastic Approximation}
\label{sec:sto-approx}
In the previous section, we introduced an idealized deterministic algorithm relying on the operators $\mathcal{P}_2$ and $\mathcal{T}_2$, which involve expectations that we assumed we could compute perfectly. However, in practical situations, those expectations are unknown and instead we use samples. Then we can use the classical  theory of stochastic approximation to obtain convergence of algorithms with sample-based updates. In this section, we assume that the updates are still done in a synchronous way. The case of asynchronous updates is discussed in the next section.

In this section, we consider the following setting. Let us assume that for any $(\mu,x,a)$, the learner can know the value $f(x,a,\mu)$. Furthermore, the learner can sample a realization of the random variable
\begin{align*}
    X^{'}_{x,a,\mu}\sim p(\cdot|x,a,\mu),
\end{align*}
Then, the learner has access to realizations of the following random variables $\widecheck{\mathcal{T}}_{\mu,Q}(x,a)$ and $\widecheck{\mathcal{P}}_{x,a,\mu}(\mu)$ taking values respectively in $\mathbbm{R}$ and $\Delta^{|\mathcal{X}|}$:
\begin{align*}
    \begin{cases}
        \widecheck{\mathcal{T}}_{\mu,Q}(x,a) = f(x,a,\mu)+\gamma\min_{a'}Q(X^{'}_{x,a,\mu},a')-Q(x,a)\\
        (\widecheck{\mathcal{P}}_{x,a}(\mu)(x'))_{x'\in\mathcal{X}} = \left(\mathbbm{1}_{\{X^{'}_{x,a,\mu}=x'\}}-\mu(x')\right)_{x'\in\mathcal{X}}.
    \end{cases}
\end{align*}
Observe that 
\begin{align*}
    \begin{cases}
        \mathbb{E}[\widecheck{\mathcal{T}}_{\mu,Q}(x,a)] = \sum_{x''}p(x''|x,a,\mu)\left[f(x,a,\mu)+\gamma\min_{a'}Q(x'',a')-Q(x,a)\right] = \mathcal{T}_2(Q,\mu)(x,a)\\
        \mathbb{E}[\widecheck{\mathcal{P}}_{x,a}(\mu)(x')]=\sum_{x''}p(x''|x,a,\mu)(\mathbbm{1}_{\{x''=x'\}}-\mu(x')) = p(x'|x,a,\mu) - \mu(x').
    \end{cases}
\end{align*}

For MFG, if the starting point $x$ comes from a random variable $X\sim\mu$ and if $a$ is an action chosen from a soft-min policy at $X$ according to $Q$, i.e., with probability distribution $\softmin Q(X,\cdot)$, then we obtain
\begin{align}
    \mathbb{E}[\widecheck{\mathcal 
 {P}}_{X,\softmin Q(X)}(\mu)(x')] &=\sum_{x}\mu(x)\sum_{x''}p(x''|x,\softmin Q(x),\mu)(\mathbbm{1}_{\{x''=x'\}}-\mu(x'))\nonumber\\
    &=\mathcal{P}_2(Q,\mu)(x'). \label{eq:exp-Pcheck-P}
\end{align}

Because of that, for MFG we can replace the deterministic updates by the following stochastic ones, starting from some initial $Q_0$, $\mu_0$: for $n=0,1,...,$
\begin{align}
    \begin{cases}\label{stochastic_two}
        \mu_{n+1}(x)&= \mu_n(x) + \rho^{\mu}_n\widecheck{\mathcal{P}}_{X_n,\softmin Q(X_n)}(\mu_n)(x) \\
        &=\mu_n(x) + \rho^{\mu}_n (\mathcal{P}_2(Q_n,\mu_n)(x)+\mathbf{P}_n(x)),\hspace{10pt}\forall x \in \mathcal{X}\\
        Q_{n+1}(x,a)&= Q_n(x,a)+\rho^Q_n\widecheck{\mathcal{T}}_{\mu_n,Q_n}(x,a)\\
        &= Q_n(x,a) + \rho^Q_n(\mathcal{T}_2(Q_n,\mu_n)(x,a)+\mathbf{T}_n(x,a)),\hspace{10pt}\forall(x,a)\in\mathcal{X}\times\mathcal{A} \\
        \hspace{30pt}X_n&\sim \mu_n
    \end{cases}
\end{align}
where we introduced the notation:
\begin{align}
\label{MDs}
    \begin{cases}
        \mathbf{P}_n(x) = \widecheck{\mathcal{P}}_{X_n,\softmin Q(X_n)}(\mu_n)(x) - \mathcal{P}_2(Q_n,\mu_n)(x)\\
        \mathbf{T}_n(x,a) = \widecheck{\mathcal{T}}_{\mu_n,Q_n}(x,a) - \mathcal{T}_2(Q_n,\mu_n)(x,a)
    \end{cases}
\end{align}
with $X_n$ sampled from $\mu_n$. Note that $\mathbf{T}_n$ and $\mathbf{P}_n$ are martingale difference sequences.\footnote{By Markov property, $\mathbb{E}[\mathbf{P}_n(x)|\mathcal{F}_{n-1}] = \mathbb{E}[\widecheck{\mathcal{P}}_{X_n,\argmin_{a}Q(X_n,a),\mu_n}(\mu_n)(x)|\mathcal{F}_{n-1}] - \mathcal{P}_3(\mu_n,Q_n,\mu_n)(x) = \mathbb{E}[\widecheck{\mathcal{P}}_{X_n,\argmin_{a}Q(X_n,a),\mu_n}(\mu_n)(x)|X_{n-1}=x_{n-1}] - \mathcal{P}_3(\mu_n,Q_n,\mu_n)(x) = 0$. Similarly we have that $\mathbf{T}_n$ is also martingale difference sequence.\label{ft}}\\

Similarly for MFC, if the starting point $x'$ comes from a random variable $X^{(x,a)}\sim\mu^{(x,a)}$ and if $a'$ is an action chosen from the following policy,
\begin{align}
    \tilde\pi(x') := \begin{cases}
        \delta_a\hspace{3mm} \text{if}\hspace{5mm} x'=x, \\
        \softmin Q(X^{(x,a)},\cdot)\quad \text{for}\hspace{3mm} x'\neq x.
    \end{cases} 
\end{align}
then we obtain
\begin{align}
    \mathbb{E}[\widecheck{\mathcal 
 {P}}_{X,\tilde\pi(X)}(\mu^{(x,a)})(y)] &=\sum_{x'\neq x}\mu^{(x,a)}(x')\sum_{x''}p(x''|x',\softmin Q(x'),\mu^{(x,a)})(\mathbbm{1}_{\{x''=y\}}-\mu^{(x,a)}(y))\nonumber\\
    &\quad + \mu^{(x,a)}(x)\sum_{x''}p(x''|x,a,\mu^{(x,a)})(\mathbbm{1}_{\{x''=y\}}-\mu^{(x,a)}(y))\nonumber\\
    &=\sum_{x'\neq x}\mu^{(x,a)}(x')p(y|x',{\softmin}_\phi Q(x'),\mu^{(x,a)}) +\mu^{(x,a)}(x)p(y|x,a,\mu^{(x,a)}) - \mu^{(x,a)}(y)\nonumber\\
    &=\mathcal{\tilde P}^{(x,a)}_2(Q,\mu^{(x,a)})(y). \label{eq:mfc-Pcheck-P}
\end{align}

Because of that, for MFC we can replace the deterministic updates by the following stochastic ones, starting from some initial $Q_0$, $\mu^{(x,a)}_0$: for $n=0,1,...,$
\begin{align}
    \begin{cases}\label{stochastic_two_MFC}
        \mu^{(x,a)}_{n+1}(y)&= \mu^{(x,a)}_n(y) + \rho^{\mu}_n\widecheck{\mathcal{P}}_{X^{(x,a)}_n,\tilde\pi(X^{(x,a)}_n)}(\mu^{(x,a)}_n)(y) \\
        &=\mu^{(x,a)}_n(y) + \rho^{\mu}_n (\mathcal{\tilde P}^{(x,a)}_2(Q_n,\mu^{(x,a)}_n)(y)+\mathbf{P}^{(x,a)}_n(y)),\hspace{10pt}\forall y \in \mathcal{X}\\
        Q_{n+1}(x,a)&= Q_n(x,a)+\rho^Q_n\widecheck{\mathcal{T}}_{\mu^{(x,a)}_n,Q_n}(x,a)\\
        &= Q_n(x,a) + \rho^Q_n(\mathcal{T}_2(Q_n,\mu^{(x,a)}_n)(x,a)+\mathbf{T}_n(x,a)),\hspace{10pt}\forall(x,a)\in\mathcal{X}\times\mathcal{A} \\
        \hspace{30pt}X^{(x,a)}_n&\sim \mu^{(x,a)}_n
    \end{cases}
\end{align}
where we introduced the notation:
\begin{align}
\label{MDs_MFC}
    \begin{cases}
        \mathbf{P}^{(x,a)}_n(y) = \widecheck{\mathcal{P}}_{X^{(x,a)}_n,\tilde\pi(X^{(x,a)}_n)}(\mu^{(x,a)}_n)(y) - \mathcal{\tilde P}^{(x,a)}_2(Q_n,\mu^{(x,a)}_n)(y)\\
        \mathbf{T}_n(x,a) = \widecheck{\mathcal{T}}_{\mu^{(x,a)}_n,Q_n}(x,a) - \mathcal{T}_2(Q_n,\mu^{(x,a)}_n)(x,a)
    \end{cases}
\end{align}
with $X^{(x,a)}_n$ sampled from $\mu^{(x,a)}_n$. Now we clearly define the system of iteration referring to \cite{Borkar97}.

Next, we will establish the convergence of the two-timescale approach with stochastic approximation defined in \eqref{stochastic_two} and \eqref{stochastic_two_MFC}.
\begin{proposition}\label{squaremartingale}
    Suppose  Assumptions~\ref{squaresumlearningrate_mfg} and Assumption~\ref{squaresumlearningrate_mfc} hold. Let $\psi^\mu_n(x):=\sum_{m=1}^n \rho^\mu_m\mathbf{P}_m(x)$, $\psi^{\mu^{(x,a)}}_n(y):=\sum_{m=1}^n \rho^\mu_m\mathbf{P}^{(x,a)}_m(y)$ and $\psi^Q_n(x,a):=\sum_{m=1}^n \rho^Q_m \mathbf{T}_m(x,a)$. They are square integrable martingales and hence they converge a.s. as $n \to +\infty$.
\end{proposition}
\begin{proof}
    By Lemma 4.5 in \cite{Borkar00}, we directly have $\psi^Q_n$ is a square integrable martingale. Also we have $E[\|\mathbf{{P}}_n\|^2]<1$ and $E[\|\mathbf{{P}}^{(x,a)}_n\|^2]<1$. Using this and the square summability of $\{\rho^{\mu}_n\}$ assumed in Assumption~\ref{squaresumlearningrate_mfg} and Assumption~\ref{squaresumlearningrate_mfc}, the bound immediately follows, which shows that $\psi^\mu_n$ and $\psi^{\mu^{(x,a)}}_n$ are also square integrable martingales. Then by martingale convergence theorem \cite[p.62]{neveu1975discrete}, we have the convergence of the three martingales.
\end{proof}

For MFG, we have the following theorem. 
\begin{theorem}\label{Stocastic MFG}
    Suppose Assumptions \ref{squaresumlearningrate_mfg}, \ref{fp_lipschitz}, \ref{mfclp} and \ref{GASE} hold. Then $(\mu_n, Q_n)_{n \ge 0}$ defined in \eqref{stochastic_two} converges to $(\mu^{*\phi},Q^*_{\mu^{*\phi}})$ a.s. as $n \to\infty$.
\end{theorem}
\begin{proof}
    With Assumption \ref{squaresumlearningrate_mfg}, \ref{fp_lipschitz}, \ref{mfclp} and \ref{GASE}, and the results of Propositions \ref{PLipschitz}, \ref{TLipschitz},  and \ref{squaremartingale}, the assumptions of \cite[Theorem 1.1]{Borkar97} are satisfied. This result guarantees the convergence in the statement.
\end{proof}
For MFC, 
we define $V_n(x)=\min_a Q_n(x,a)$ where $Q_n$ satisfies \eqref{stochastic_two_MFC}. Then,
we have: 
\begin{theorem}\label{Stocastic MFC}
    Suppose Assumptions  \ref{fp_lipschitz}, \ref{mfclp}, \ref{squaresumlearningrate_mfc}, and \ref{GASE-MFC} hold. Then $(\mu_n, V_n)_{n \ge 0}$  converges to $(\mu^{*\phi}_{Q^{*\phi}},V^{*\phi})$ a.s. as $n \to\infty$.
\end{theorem}
\begin{proof}
    With Assumptions \ref{fp_lipschitz}, \ref{mfclp}, \ref{squaresumlearningrate_mfc},  \ref{GASE-MFC} and the results of Propositions \ref{PLipschitz}, \ref{TLipschitz},  and \ref{squaremartingale}, the assumptions of \cite[Theorem 1.1]{Borkar97} are satisfied. This result guarantees the convergence in the statement.
\end{proof}

\section{Asynchronous Setting}\label{sec:sto-approx-asynchronous}
In the previous section, we assumed that learner have access to a generative model, i.e., to a simulator which can provide the samples of transitions drawn according to the hidden dynamic for arbitrary state $x$. However, in a more realistic situation, the learner is constrained to follow the trajectory sampled by the environment without the ability to choose arbitrarily its state. We call this an \emph{asynchronous setting} and we define the system as follows for MFG,
    \begin{subnumcases}{}
    \label{diffa}
        \mu_{n+1} = \mu_n + \rho_{n,X_n,A_n}^{\mu}(\mathcal{P}_2(Q_n,\mu_n) + \mathbf{P}_n)\\
        \label{diffb}
        Q_{n+1}(X_n,A_n) = Q_n(X_n,A_n) + \rho^Q_{n,X_n,A_n}({\mathcal{T}_2}(Q_n,{\mu}_n)(X_n,A_n)+\mathbf{T}_n(X_n,A_n))
    \end{subnumcases}
for $x\in\mathcal{X},\:a\in\mathcal{A}, n\geq0$, where $(X_n,A_n)$ is the state-action pair at time $n$. Comparing to the synchronous environment in the previous section, now we update the $Q$-table at one state-action pair for each time. As a consequence, the state-action pairs are not all visited at the same frequency and the learning rate needs to be adjusted accordingly. We denote the learning rate for each state-action pair $(x,a)$ as $\rho^Q_{n,x,a}$. The martingale difference sequence is defined as those in the previous section, namely~\eqref{MDs}.

In turns, for MFC, we define the following system,
    \begin{subnumcases}{}
    \label{diffc}
        \mu^{(x,a)}_{n+1} = \mu^{(x,a)}_n + \rho_{n,X^{(x,a)}_n,A^{(x,a)}_n}^{\mu}(\mathcal{\tilde P}_2(Q_n,\mu^{(x,a)}_n) + \mathbf{P}^{(x,a)}_n)\\
        \label{diffd}
        Q_{n+1}(x,a) = Q_n(x,a) + \rho^Q_{n,X^{(x,a)}_n,A^{(x,a)}_n}({\mathcal{T}_2}(Q_n,{\mu}^{(x,a)}_n)(x,a)+\mathbf{T}_n(x,a))\mathbbm{1}_{\{X^{(x,a)}_n=x\}}
    \end{subnumcases}
    for $x\in\mathcal{X},\:a\in\mathcal{A}, n\geq0$, where $(X^{(x,a)}_n,A^{(x,a)}_n)$ is the state-action pair at time $n$. For all those paths, we denote the learning rate for each state-action pair $(x,a)$ as $\rho^Q_{n,x,a}$. The martingale difference sequence is defined  as those in the previous section, namely~\eqref{MDs_MFC}.

Next, we will establish the convergence of the two-timescale approach with stochastic approximation under asynchronous setting defined in \eqref{diffa}--\eqref{diffb}.

We will use the following assumption. 
\begin{assumption}\label{dlearningrate}
    For any $(x,a)$, the sequence  $(\rho^{\mu}_{n,x,a})_{n\in \mathbbm{N}}$ satisfies the following conditions: 
     \[\sum_k \rho^{\mu}_{k,x,a} = \infty\quad \mbox{and} \quad \sum_k |\rho^{\mu}_{k,x,a}|^2<\infty.\]
\end{assumption}
\begin{assumption}[Ideal tapering stepsize (ITS)]\label{asynchronouslearningrate}
    For any $(x,a)$, the sequence  $(\rho^Q_{n,x,a})_{n\in \mathbbm{N}}$ satisfies the following conditions: 
    \begin{itemize}
        \item (i) $\sum_k \rho^Q_{k,x,a} = \infty,$ and  $\sum_k |\rho^Q_{k,x,a}|^2<\infty.$
        \item (ii) $\rho^Q_{k+1,x,a}\leq \rho^Q_{k,x,a}$ from some k onwards.
        \item (iii) there exists $r\in(0,1)$ such that
        $
            \sum_{k}(\rho^Q_{k,x,a})^{(1+q)}<\infty,\hspace{20pt}\forall q\geq r.
        $
        \item (iv) for $\xi\in(0,1)$, 
        $
            \sup_{k} \frac{\rho^Q_{[\xi k],x,a}}{\rho^Q_{k,x,a}}<\infty,
        $
        where $[\cdot]$ stands for the integer part.
        \item (v) for $\xi \in(0,1)$, and $D(k):=\sum_{\ell=0}^k \rho^Q_{\ell,x,a}$, 
        $
            \frac{D([yk])}{D(k)}\to 1,
        $
        uniformly in $y\in[\xi,1]$.
    \end{itemize}
\end{assumption}

In the asynchronous setting, we update step-by-step the Q table along the trajectory generated randomly. In order to ensure convergence, every state-action pair needs to be updated repeatedly. We introduce $\nu(x,a,n)=\sum_{m=0}^{n}\mathbbm{1}_{\{(X_m,A_m)=(x,a)\}}$ as the number of times the process $(X_n,A_n)$ visit state $(x,a)$ up to time $n$. 
The following assumptions are related to the (almost sure) good behavior of the number visits along trajectories. 
\begin{assumption}\label{frequentupdate}
    There exists a deterministic $\Delta > 0$ such that for all $(x,a)$, 
\begin{align*}
    \liminf_{n\to\infty} \frac{\nu(x,a,n)}{n}\geq\Delta\:\:\:\:\:a.s.
\end{align*}
Furthermore, letting
$
    N(n,\xi) = \min\left\{m>n:\sum_{k=n+1}^{m}\rho^Q_{k,x,a}>\xi\right\} 
$
for $\xi>0$, the limit 
\begin{align*}
    \lim_{n\to\infty}\frac{\sum_{k=\nu((x,a),n)}^{\nu((x,a),N(n,\xi))}\rho^Q_{k,x,a}}{\sum_{k=\nu((x',a'),n)}^{\nu((x',a'),N(n,\xi))}\rho^Q_{k,x',a'}}
\end{align*}
exists a.s. for all pairs $(x,a)$, $(x',a')$. 
\end{assumption}

\begin{assumption}\label{balanced}
    There exists $d_{ij}>0$, such that for each $i=(x,a)$ and $j=(x',a')$
    \begin{align*}
        \lim_{n\to\infty} \frac{\sum_{m=0}^n \rho^Q_{m,j}}{\sum_{m=0}^n \rho^Q_{m,i}}=d_{ij}.
    \end{align*}
\end{assumption}

\begin{proposition}\label{lyp}
    Suppose Assumptions~\ref{fp_lipschitz}, \ref{mfclp}, and~\ref{GASE} hold. %
    Then for every $\mu$, there exists a strict Lyapunov function for the function $Q \mapsto \mathcal{T}_2(Q,\mu)$. Likewise, there exists a strict Lyapunov function for the function $Q \mapsto \mathcal{T}_2(Q,{\mu^{*\phi}_{Q}})$.
\end{proposition}
\begin{proof}
    
    This is a direct consequence of the strict contraction property. For the first part, let us fix $\mu$. Recall from  that $Q \mapsto \mathcal{B}_\mu Q$ is a strict contraction over $Q$ for any given $\mu$. 
    Let $\tilde{Q}$ be defined by  $\mathcal{T}_2(\tilde{Q},\mu) = 0$. 
    Let us define $V= \|Q-\tilde{Q}\|_\infty$. By \cite[Case 1, Page 844]{Borkar98}, $V$ is the strict Lyapunov function for $Q \mapsto \mathcal{T}_2(Q,\mu)$. For the second part, recall that $Q \mapsto \mathcal{B}_{\mu^{*\phi}_Q}Q$ is a strict contraction. %
    So we can define $\tilde{Q}$ such that $\mathcal{T}_2(\tilde{Q},{\mu^{*\phi}_{\tilde{Q}}}) = 0$, and by \cite[Case 1, Page 844]{Borkar98} $V= \|Q-\tilde{Q}\|_\infty$ is the corresponding strict Lyapunov function.
\end{proof}

For MFG, we have the following theorem.
\begin{theorem}\label{Asyc MFG}
    Suppose Assumptions~\ref{fp_lipschitz}, \ref{mfclp}, 
    \ref{GASE}, %
    \ref{squaresumlearningrate_mfg}, 
    \ref{dlearningrate}, \ref{asynchronouslearningrate}, \ref{frequentupdate} and \ref{balanced} hold. Then, $(\mu_n, Q_n)_{n \ge 0}$ defined in \eqref{simpletwotimescaleapproach_mfg} converges to $(\mu^{*\phi},Q^*_{\mu^{*\phi}})$ a.s. as $n \to\infty$.
\end{theorem}
\begin{proof}
    With the stated assumptions and the results of Propositions \ref{PLipschitz}, \ref{TLipschitz}, 
    \ref{eQ_MFG}, 
    \ref{squaremartingale} and \ref{lyp}, the assumptions of \cite[Theorem 3.1]{Borkar98} and \cite[Lemma 4.11]{Konda99} are satisfied. These results guarantee the convergence in the statement.
\end{proof}

For MFC, we have the following theorem. 
\begin{theorem}\label{Asyc MFC}
    Suppose Assumptions \ref{fp_lipschitz}, \ref{mfclp}, 
    \ref{GASE-MFC}, %
    \ref{squaresumlearningrate_mfc}, 
    \ref{dlearningrate}, \ref{asynchronouslearningrate}, \ref{frequentupdate} and \ref{balanced} hold. Then, $(\mu_n, V_n)$ defined in Theorem \ref{thMFCcvg} converges to $(\mu^{*\phi}_{Q^{*\phi}},V^{*\phi})$ a.s. as $n \to\infty$.
\end{theorem}
\begin{proof}
    With the stated assumptions and the results of Propositions \ref{PLipschitz}, \ref{TLipschitz}, \ref{emu_MFC}, 
    \ref{squaremartingale} and \ref{lyp}, the assumptions of \cite[Theorem 3.1]{Borkar98} and \cite[Lemma 4.11]{Konda99} are satisfied. These results guarantee the convergence in the statement.
\end{proof}

\section{Numerical Experiments with Algorithms \ref{algo:U2MFQL} and \ref{algo:U2MFQL-MFC}}
We run the algorithms \ref{algo:U2MFQL} (MFG) and \ref{algo:U2MFQL-MFC}
(MFC) for the example given in the Section \ref{nr}. 

In order to test the stability of Algorithms \ref{algo:U2MFQL}, the solution of the MFG problem is reached based on the choice $(\omega^Q, \omega^{\mu}) = (0.55,0.85)$. This choice satisfies the assumptions that we used to prove convergence. Figure~\ref{fullmfg} illustrates the result.
We provide the convergence plot of $\mu_n(x_0)$, i.e., the value of the distribution at state 0, as a function of the step $n\leq 10000$. The y-axis is the value of $\mu_n(x_0)$, and x-axis is the number of iterations.  

\begin{figure}[H]
\center 
\subfloat[MFG: $(\omega^Q, \omega^{\mu})=(0.55,0.88)$]{\includegraphics[width=0.6\textwidth,height=6cm]{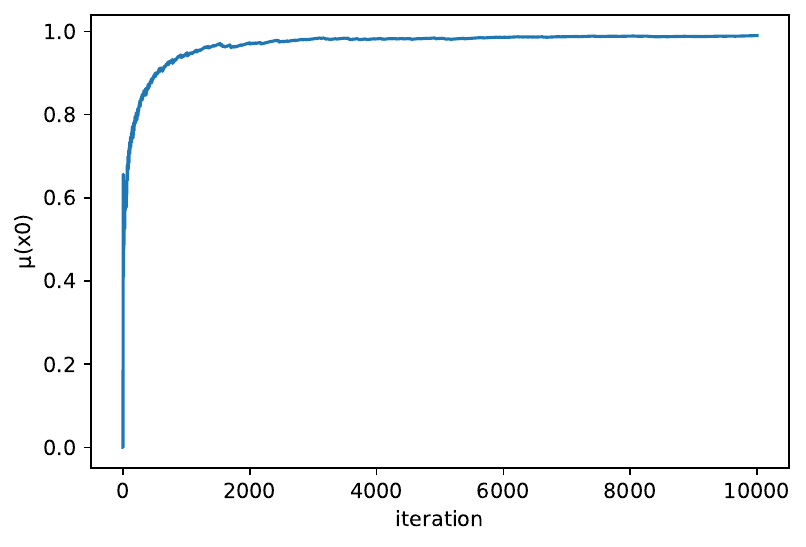}}
\caption{MFG setting: Convergence of the distribution. The plot represents the value of $\mu_n(x_0)$ as a function of $n$.}
\label{fullmfg}
\end{figure}
The resulting distribution $\mu=(\mu(x_0),\mu(x_1))$ which is about  $(0.99,0.01)=(1-p,p)$, corresponds to the equilibrium distribution in the  MFG scenario with optimal strategy $\alpha^*=(s,m)$ given by the limiting Q-table:

\begin{center}
\begin{tabular}{ |p{3cm}||p{2cm}|p{2cm}|p{2cm}|p{2cm}|  }
 \hline
 & 
 \multicolumn{2}{|c|}{Q-table} 
 &                                            %
\multicolumn{2}{|c|}{Theoretical Values} \\
 \hline
 States/Actions & Action 0 &Action 1 & Action 0 & Action 1\\
 \hline
 State 0  &9.841 & 10.051  &9.91 &10.4\\
 \hline 
 State 1  & 11.061& 10.761 & 11.40& 10.91    \\

\hline
\end{tabular}
\end{center}

In order to test the stability of Algorithms \ref{algo:U2MFQL-MFC}, the solution of MFC problem is reached based on the choice $(\omega^Q, \omega^{\mu}) = (0.85,0.55)$. This choice satisfies the assumptions that we used to prove convergence. Figure~\ref{fullmfc} illustrates the results.
We provide the convergence plot of $\mu^{\alpha^*}_n(x_0)$, i.e., the value of the distribution at state 0, as a function of the step $n\leq 100000$. The y-axis is the value of $\mu^{\alpha^*}_n(x_0)$, and x-axis is the number of iterations.
As expected, there are more fluctuations in the MFC case than in the MFG case. Indeed, in the MFC case the distribution is updated faster.
 \begin{figure}[H]
\center 
\subfloat[MFC: $(\omega^Q, \omega^{\mu})=(0.85,0.55)$]{\includegraphics[width=0.6\textwidth,height=6cm]{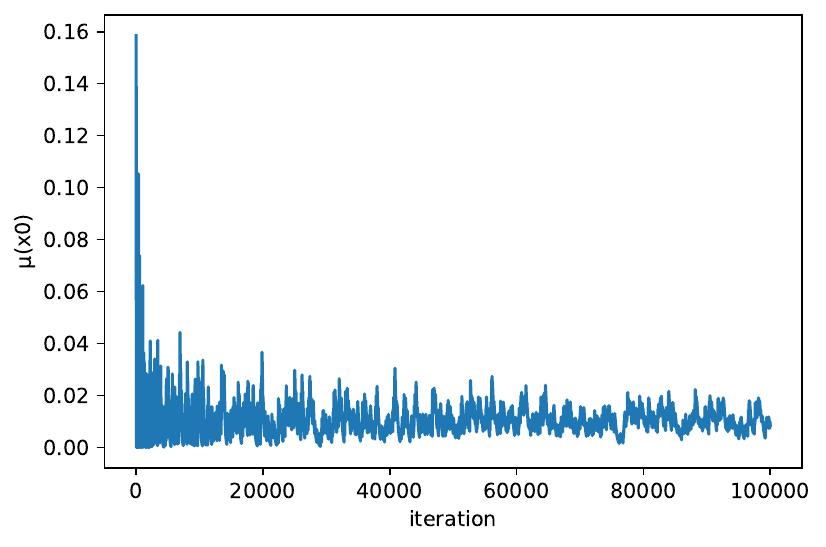}}
\caption{MFC setting: Convergence of the distribution. The plot represents the value of $\mu^{\alpha^*}_n(x_0)$ as a function of $n$.}
\label{fullmfc}
\end{figure}
The resulting distribution $\mu=(\mu(x_0),\mu(x_1))$ which is about  $(0.01,0.99)=(p,1-p)$ corresponds to the equilibrium distribution in the  MFC scenario with optimal strategy $\alpha^*=(m,s)$ given by the limiting Q-table:

\begin{center}
\begin{tabular}{ |p{3cm}||p{2cm}|p{2cm}|p{2cm}|p{2cm}|  }
 \hline
 & 
 \multicolumn{2}{|c|}{Q-table }
  &                                            %
\multicolumn{2}{|c|}{Theoretical Values} \\
 \hline
 States/Actions& Action 0 &Action 1& Action 0 &Action 1\\
 \hline
 State 0 &3.105 & 1.112 &3.05 &1.09\\
 \hline
 State 1 & 2.104 & 4.063 & 2.09 & 4.05\\
 \hline
\end{tabular}
\end{center}
Figure \ref{fig:A1surface} and \ref{fig:A1error} illustrate the robustness of our Algorithm \ref{algo:U2MFQL} and \ref{algo:U2MFQL-MFC} respect to the the choice of learning rate parameters $\omega^Q$ and $\omega^\mu$. For MFG regime, we use one run for 500000 steps, and for MFC regime, we use 100 runs for 100000 steps and display the average.

 \begin{figure}[H]
\center 
\subfloat{\includegraphics[width=0.4\textwidth]{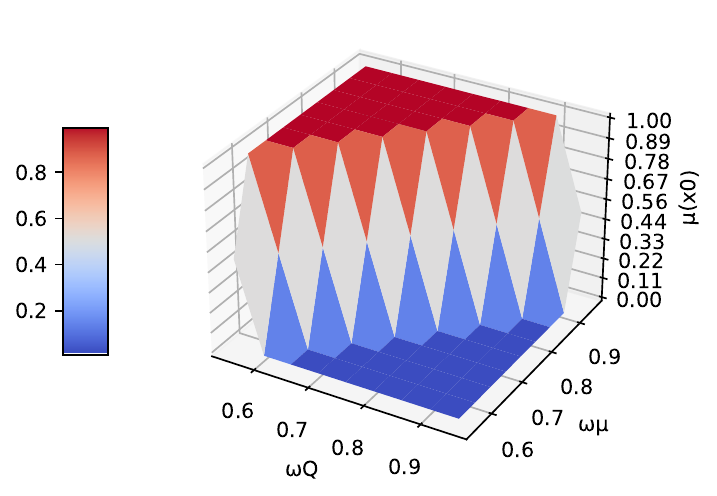}}
\caption{Robustness of the distribution with respect to the different choice of learning rate parameters $\omega^Q$ and $\omega^\mu$.\label{fig:full-omegas}}
\label{fig:A1surface}
\end{figure}
In order to show the stability in the distribution, we also provide the error plot with respect to $\mu(x_0)$. We use the absolute error. 

 \begin{figure}[H]
\center 
\subfloat{\includegraphics[width=0.4\textwidth]{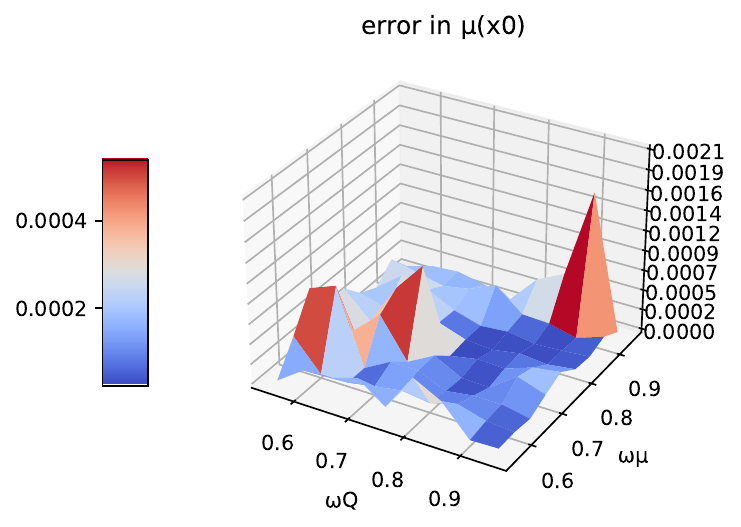}}
\caption{Absolute error in the distribution with respect to the different choice of learning rate parameters $\omega^Q$ and $\omega^\mu$.\label{fig:error-omegas}}
\label{fig:A1error}
\end{figure}
From this plot, one can see that the performance of Algorithm \ref{algo:U2MFQL} and \ref{algo:U2MFQL-MFC} are robust, the error for all combination of learning rates is close to zero.

\section{Conclusion}
\label{sec:conclusion}
In this paper, we  provided precise assumptions ensuring the convergence of the two-timescale RL algorithm proposed in \cite{Andrea20} and we clarified that in the MFC case, one population distribution for each state/action point needs to be updated.  The proof relies on the theory of stochastic approximation. In order to use a two-timescale argument as  in  \cite{Borkar97}, we smooth the strategies using a soft-min function to regularize the dependency on the population distributions.  One of the main properties of the algorithms is that they are characterized  by the ratio of two learning rates leading to the solution of a Mean Field Game problem or of a Mean Field Control problem. To illustrate this key fact, we give an example satisfying the sets of assumptions for the two problems.

\bibliographystyle{apalike}
\bibliography{bibtex}

\begin{thebibliography}{}

\bibitem[Anahtarci et~al., 2023]{anahtarci2023q}
Anahtarci, B., Kariksiz, C.~D., and Saldi, N. (2023).
\newblock Q-learning in regularized mean-field games.
\newblock {\em Dynamic Games and Applications}, 13(1):89--117.

\bibitem[Angiuli et~al., 2023a]{angiuli2023deep}
Angiuli, A., Fouque, J.-P., Hu, R., and Raydan, A. (2023a).
\newblock Deep reinforcement learning for infinite horizon mean field problems
  in continuous spaces.
\newblock {\em arXiv preprint arXiv:2309.10953}.

\bibitem[Angiuli et~al., 2022]{Andrea20}
Angiuli, A., Fouque, J.-P., and Lauri{\`e}re, M. (2022).
\newblock Unified reinforcement q-learning for mean field game and control
  problems.
\newblock {\em Mathematics of Control, Signals, and Systems}, 34:217--271.

\bibitem[Angiuli et~al., 2023b]{angiuli2023reinforcement}
Angiuli, A., Fouque, J.-P., and Lauri{\`e}re, M. (2023b).
\newblock Reinforcement learning for mean field games, with applications to
  economics.
\newblock {\em Machine Learning and Data Sciences for Financial Markets: A
  Guide to Contemporary Practices}, page 393.

\bibitem[Bensoussan et~al., 2013]{MR3134900}
Bensoussan, A., Frehse, J., and Yam, S. C.~P. (2013).
\newblock {\em Mean field games and mean field type control theory}.
\newblock Springer Briefs in Mathematics. Springer, New York.

\bibitem[Borkar and Soumyanatha, 1997]{563625}
Borkar, V. and Soumyanatha, K. (1997).
\newblock An analog scheme for fixed point computation. i. theory.
\newblock {\em IEEE Transactions on Circuits and Systems I: Fundamental Theory
  and Applications}, 44(4):351--355.

\bibitem[Borkar, 1997]{Borkar97}
Borkar, V.~S. (1997).
\newblock Stochastic approximation with two time scales.
\newblock {\em Systems \& Control Letters}, 29(5):291--294.

\bibitem[Borkar, 1998]{Borkar98}
Borkar, V.~S. (1998).
\newblock Asynchronous stochastic approximations.
\newblock {\em SIAM Journal on Control and Optimization}, 36(3):840--851.

\bibitem[Borkar and Meyn, 2000]{Borkar00}
Borkar, V.~S. and Meyn, S.~P. (2000).
\newblock The o.d.e. method for convergence of stochastic approximation and
  reinforcement learning.
\newblock {\em SIAM Journal on Control and Optimization}, 38(2):447--469.

\bibitem[Busoniu et~al., 2008]{busoniu2008comprehensive}
Busoniu, L., Babuska, R., and De~Schutter, B. (2008).
\newblock A comprehensive survey of multiagent reinforcement learning.
\newblock {\em IEEE Transactions on Systems, Man, and Cybernetics, Part C
  (Applications and Reviews)}, 38(2):156--172.

\bibitem[Carmona and Delarue, 2018]{carmona2018probabilisticI-II}
Carmona, R. and Delarue, F. (2018).
\newblock {\em Probabilistic Theory of Mean Field Games with Applications
  I-II}.
\newblock Springer.

\bibitem[Carmona et~al., 2019]{CarmonaLauriereTan-2019-LQMFRL}
Carmona, R., Lauri{\`e}re, M., and Tan, Z. (2019).
\newblock Linear-quadratic mean-field reinforcement learning: Convergence of
  policy gradient methods.
\newblock Preprint.

\bibitem[Carmona et~al., 2023]{carmona2023model}
Carmona, R., Lauri{\`e}re, M., and Tan, Z. (2023).
\newblock Model-free mean-field reinforcement learning: mean-field mdp and
  mean-field q-learning.
\newblock {\em The Annals of Applied Probability}, 33(6B):5334--5381.

\bibitem[Cui and Koeppl, 2021]{cui2021approximately}
Cui, K. and Koeppl, H. (2021).
\newblock Approximately solving mean field games via entropy-regularized deep
  reinforcement learning.
\newblock In {\em International Conference on Artificial Intelligence and
  Statistics}, pages 1909--1917. PMLR.

\bibitem[Du et~al., 2023a]{du2023empirical}
Du, K., Jiang, Y., and Li, J. (2023a).
\newblock Empirical approximation to invariant measures for mckean--vlasov
  processes: Mean-field interaction vs self-interaction.
\newblock {\em Bernoulli}, 29(3):2492--2518.

\bibitem[Du et~al., 2023b]{du2023sequential}
Du, K., Jiang, Y., and Li, X. (2023b).
\newblock Sequential propagation of chaos.
\newblock {\em arXiv preprint arXiv:2301.09913}.

\bibitem[Du et~al., 2023c]{du2023self}
Du, K., Ren, Z., Suciu, F., and Wang, S. (2023c).
\newblock Self-interacting approximation to mckean-vlasov long-time limit: a
  markov chain monte carlo method.
\newblock {\em arXiv preprint arXiv:2311.11428}.

\bibitem[Elie et~al., 2020]{elie2020convergence}
Elie, R., Perolat, J., Lauri{\`e}re, M., Geist, M., and Pietquin, O. (2020).
\newblock On the convergence of model free learning in mean field games.
\newblock In {\em in proc. of AAAI}.

\bibitem[Frikha et~al., 2023]{frikha2023actor}
Frikha, N., Germain, M., Lauri{\`e}re, M., Pham, H., and Song, X. (2023).
\newblock Actor-critic learning for mean-field control in continuous time.
\newblock {\em arXiv preprint arXiv:2303.06993}.

\bibitem[Gao and Pavel, 2018]{gao2018properties}
Gao, B. and Pavel, L. (2018).
\newblock On the properties of the softmax function with application in game
  theory and reinforcement learning.

\bibitem[Gu et~al., 2021]{gu2021meanQ}
Gu, H., Guo, X., Wei, X., and Xu, R. (2021).
\newblock Mean-field controls with q-learning for cooperative marl: convergence
  and complexity analysis.
\newblock {\em SIAM Journal on Mathematics of Data Science}, 3(4):1168--1196.

\bibitem[Gu et~al., 2017]{gu2017deep}
Gu, S., Holly, E., Lillicrap, T., and Levine, S. (2017).
\newblock Deep reinforcement learning for robotic manipulation with
  asynchronous off-policy updates.
\newblock In {\em 2017 IEEE international conference on robotics and automation
  (ICRA)}, pages 3389--3396. IEEE.

\bibitem[Guo et~al., 2019]{guo2019learning}
Guo, X., Hu, A., Xu, R., and Zhang, J. (2019).
\newblock Learning mean-field games.
\newblock In {\em Advances in Neural Information Processing Systems}, pages
  4966--4976.

\bibitem[Guo et~al., 2022]{guo2022entropy}
Guo, X., Xu, R., and Zariphopoulou, T. (2022).
\newblock Entropy regularization for mean field games with learning.
\newblock {\em Mathematics of Operations research}, 47(4):3239--3260.

\bibitem[Hadikhanloo and Silva, 2019]{hadikhanloo2019finite}
Hadikhanloo, S. and Silva, F.~J. (2019).
\newblock Finite mean field games: fictitious play and convergence to a first
  order continuous mean field game.
\newblock {\em Journal de Math{\'e}matiques Pures et Appliqu{\'e}es},
  132:369--397.

\bibitem[Huang et~al., 2006]{MR2346927}
Huang, M., Malham{\'e}, R.~P., and Caines, P.~E. (2006).
\newblock Large population stochastic dynamic games: closed-loop
  {M}c{K}ean-{V}lasov systems and the {N}ash certainty equivalence principle.
\newblock {\em Commun. Inf. Syst.}, 6(3):221--251.

\bibitem[Konda and Borkar, 1999]{Konda99}
Konda, V.~R. and Borkar, V.~S. (1999).
\newblock Actor-critic--type learning algorithms for markov decision processes.
\newblock {\em SIAM Journal on Control and Optimization}, 38(1):94--123.

\bibitem[Lanctot et~al., 2017]{lanctot2017unified}
Lanctot, M., Zambaldi, V., Gruslys, A., Lazaridou, A., Tuyls, K., P{\'e}rolat,
  J., Silver, D., and Graepel, T. (2017).
\newblock A unified game-theoretic approach to multiagent reinforcement
  learning.
\newblock {\em Advances in neural information processing systems}, 30.

\bibitem[Lasry and Lions, 2007]{MR2295621}
Lasry, J.-M. and Lions, P.-L. (2007).
\newblock Mean field games.
\newblock {\em Jpn. J. Math.}, 2(1):229--260.

\bibitem[Lauri{\`e}re et~al., 2022]{lauriere2022learning}
Lauri{\`e}re, M., Perrin, S., Geist, M., and Pietquin, O. (2022).
\newblock Learning mean field games: A survey.
\newblock {\em arXiv preprint arXiv:2205.12944}.

\bibitem[Mguni et~al., 2018]{mguni2018decentralised}
Mguni, D., Jennings, J., and de~Cote, E.~M. (2018).
\newblock Decentralised learning in systems with many, many strategic agents.
\newblock In {\em Thirty-Second AAAI Conference on Artificial Intelligence}.

\bibitem[Mnih et~al., 2015]{mnih2015human}
Mnih, V., Kavukcuoglu, K., Silver, D., Rusu, A.~A., Veness, J., Bellemare,
  M.~G., Graves, A., Riedmiller, M., Fidjeland, A.~K., Ostrovski, G., et~al.
  (2015).
\newblock Human-level control through deep reinforcement learning.
\newblock {\em nature}, 518(7540):529--533.

\bibitem[Motte and Pham, 2022]{motte2022mean}
Motte, M. and Pham, H. (2022).
\newblock Mean-field markov decision processes with common noise and open-loop
  controls.
\newblock {\em The Annals of Applied Probability}, 32(2):1421--1458.

\bibitem[Motte and Pham, 2023]{motte2023quantitative}
Motte, M. and Pham, H. (2023).
\newblock Quantitative propagation of chaos for mean field markov decision
  process with common noise.
\newblock {\em Electronic Journal of Probability}, 28:1--24.

\bibitem[Neveu, 1975]{neveu1975discrete}
Neveu, J. (1975).
\newblock {\em Discrete-parameter Martingales}.
\newblock North-Holland mathematical library. North-Holland.

\bibitem[Ouyang et~al., 2022]{ouyang2022training}
Ouyang, L., Wu, J., Jiang, X., Almeida, D., Wainwright, C., Mishkin, P., Zhang,
  C., Agarwal, S., Slama, K., Ray, A., et~al. (2022).
\newblock Training language models to follow instructions with human feedback.
\newblock {\em Advances in Neural Information Processing Systems},
  35:27730--27744.

\bibitem[Pasztor et~al., 2021]{pasztor2021efficientmodelbased}
Pasztor, B., Bogunovic, I., and Krause, A. (2021).
\newblock Efficient model-based multi-agent mean-field reinforcement learning.
\newblock {\em arXiv preprint arXiv:2107.04050}.

\bibitem[Perrin et~al., 2020]{perrin2020continuousfp}
Perrin, S., P{\'e}rolat, J., Lauri{\`e}re, M., Geist, M., Elie, R., and
  Pietquin, O. (2020).
\newblock {Fictitious Play for Mean Field Games: Continuous Time Analysis and
  Applications}.
\newblock In preparation.

\bibitem[Sell, 1973]{SELL197342}
Sell, G.~R. (1973).
\newblock Differential equations without uniqueness and classical topological
  dynamics.
\newblock {\em Journal of Differential Equations}, 14(1):42--56.

\bibitem[Silver et~al., 2016]{silver2016mastering}
Silver, D., Huang, A., Maddison, C.~J., Guez, A., Sifre, L., Van Den~Driessche,
  G., Schrittwieser, J., Antonoglou, I., Panneershelvam, V., Lanctot, M.,
  et~al. (2016).
\newblock Mastering the game of go with deep neural networks and tree search.
\newblock {\em nature}, 529(7587):484--489.

\bibitem[Subramanian and Mahajan, 2019]{SubramanianMahajan-2018-RLstatioMFG}
Subramanian, J. and Mahajan, A. (2019).
\newblock Reinforcement learning in stationary mean-field games.
\newblock In {\em Proceedings. 18th International Conference on Autonomous
  Agents and Multiagent Systems}.

\bibitem[Sutton and Barto, 2018]{sutton2018reinforcement}
Sutton, R.~S. and Barto, A.~G. (2018).
\newblock {\em Reinforcement learning: An introduction}.
\newblock MIT press.

\bibitem[Vecerik et~al., 2017]{vecerik2017leveraging}
Vecerik, M., Hester, T., Scholz, J., Wang, F., Pietquin, O., Piot, B., Heess,
  N., Roth{\"o}rl, T., Lampe, T., and Riedmiller, M. (2017).
\newblock Leveraging demonstrations for deep reinforcement learning on robotics
  problems with sparse rewards.
\newblock {\em arXiv preprint arXiv:1707.08817}.

\bibitem[Wang et~al., 2020]{wang2020reinforcement}
Wang, H., Zariphopoulou, T., and Zhou, X.~Y. (2020).
\newblock Reinforcement learning in continuous time and space: A stochastic
  control approach.
\newblock {\em Journal of Machine Learning Research}, 21(198):1--34.

\bibitem[Watkins, 1989]{watkins1989learning}
Watkins, C. J. C.~H. (1989).
\newblock {\em Learning from delayed rewards}.
\newblock PhD thesis, King's College, Cambridge.

\bibitem[Yang and Wang, 2020]{yang2020overview}
Yang, Y. and Wang, J. (2020).
\newblock An overview of multi-agent reinforcement learning from game
  theoretical perspective.
\newblock {\em arXiv preprint arXiv:2011.00583}.

\bibitem[Zaman et~al., 2023]{zaman2023oracle}
Zaman, M. A.~U., Koppel, A., Bhatt, S., and Basar, T. (2023).
\newblock Oracle-free reinforcement learning in mean-field games along a single
  sample path.
\newblock In {\em International Conference on Artificial Intelligence and
  Statistics}, pages 10178--10206. PMLR.

\bibitem[Zhang et~al., 2021]{zhang2021multi}
Zhang, K., Yang, Z., and Ba{\c{s}}ar, T. (2021).
\newblock Multi-agent reinforcement learning: A selective overview of theories
  and algorithms.
\newblock {\em Handbook of reinforcement learning and control}, pages 321--384.

\end{thebibliography}

\appendix 

\section{Lipschitz property of the 2 scale operators}
\label{app3}
We define the $\softmin_\phi$ function in the following way:
\begin{align}\label{softmin}
{\softmin}_\phi(z)=\left(\frac{e^{-\phi z_i}}{\sum_{j}e^{-\phi z_j}}\right)_{i=1,2,...,|\mathcal{A}|}
\end{align}
where $z\in \mathbbm{R}^{|\mathcal{A}|}$.  By \cite{gao2018properties}, $\softmin_\phi$ is Lipschitz continuous for the $L^2$-norm and it is $\phi$-Lipschitz,
\begin{align}
\label{control l}
    \|{\softmin}_\phi(z)-{\softmin}_\phi(z')\|_2\leq \phi\|z-z'\|_2,\hspace{20pt} z,z'\in\mathbbm{R}^{|\mathcal{A}|}
\end{align}
Moreover, since $|\mathcal{A}|$ is finite, all the norms on $\mathbbm{R}^{|\mathcal{A}|}$ are equivalent, so that
\begin{align}\label{softminlip}
    \|{\softmin}_\phi(z)-{\softmin}_\phi(z')\|_1\leq \phi \sqrt{|\mathcal{A}|}\|z-z'\|_1
\end{align}
We also define an $\argmine$ function in the following way: 
\begin{align*}
    \argmine(z)_i=\begin{cases}
        \frac{1}{k}\hspace{1cm} \text{if } z_i = \min z \\
        0\hspace{1cm}\text{otherwise}
    \end{cases}
\end{align*}
where $i=1,2,...,|\mathcal{A}|$. To be more clear, $\argmine$ will assign equal to probability to the minimal value in $z$, and $0$ probability to others. Then we know that as $\phi\to 0$, the $\softmin_\phi$ function is smoother and smoother and the corresponding Lipschitz coefficient $L_s(\phi) \to 0$. On the opposite, the $\softmin_\phi$ will converge to $\argmine$ as $\phi\to \infty$.

To alleviate the notation, we will write $Q(x):=(Q(x,a))_{a\in\mathcal{A}}$ for any $Q\in\mathbbm{R}^{|\mathcal{X}|\times|\mathcal{A}|}$, which implies $Q(x),\softmin_\phi Q(x),\argmine Q(x)\in\mathbbm{R}^{|\mathcal{A}|}$. We also introduce a more general  transition kernel, which can take an input of a probability vector over actions instead of a single action: for $x,x'\in\mathcal{X},\pi\in\Delta^{|\mathcal{A}|},\mu\in\Delta^{|\mathcal{X}|}$,
\begin{align}\label{gp}
    p(x'|x,\pi,\mu) = \sum_a \pi(x)(a)p(x'|x,a,\mu).
\end{align}
We also define:
\begin{align}
    f(x,\pi,\mu) = \sum_a \pi(x)(a)f(x,a,\mu).
\end{align}

\section{Proof of existence and uniqueness of a GASE to the first O.D.E. for $\mu_t$ in \eqref{MFGODE}}\label{GASEproof}
To prove that the GASE exists for the first O.D.E. in \eqref{MFGODE}, we will use a strict contraction property, which requires to control $\phi$ properly. We introduce the additional assumption:

\begin{assumption}\label{controlphic}
   $0<\phi < \frac{|\mathcal{X}|c_{min}-L_p}{|\mathcal{A}|}\frac{1-\gamma}{L_f + \frac{\gamma}{1-\gamma} L_p\|f\|_\infty} =: \phi_{max}$. 
\end{assumption}
With this assumption, we have $(\phi |\mathcal{A}|\frac{L_f + \frac{\gamma}{1-\gamma} L_p\|f\|_\infty}{1-\gamma} + L_p + 1-|\mathcal{X}|c_{min})<1$. We also  have $(\phi|\mathcal{A}|\frac{L_f + \gamma L_p\|Q^*_{\mu}\|_\infty}{1-\gamma} + L_p + 1-|\mathcal{X}|c_{min})<1$, which will give the strict contraction property we need in the next proposition.
\begin{proposition}\label{emu_MFG}
    Under Assumptions~\ref{fp_lipschitz}, \ref{mfclp} and \ref{controlphic}, $\dot{\mu}_t = \mathcal{P}_2(Q^*_{\mu_t},\mu_t)$ has a unique GASE, that we will denote by $\mu^{*\phi}$.
\end{proposition}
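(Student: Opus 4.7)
The plan is to rewrite the ODE $\dot\mu_t = \mathcal{P}_2(Q^*_{\mu_t},\mu_t)$ in the form $\dot\mu_t = G(\mu_t) - \mu_t$, where
\[
G(\mu) := \mathrm{P}^{\softmin_\phi Q^*_\mu,\,\mu}\,\mu,
\]
so that equilibria of the ODE are exactly the fixed points of $G$. I would then prove that $G$ is a strict $\ell^1$-contraction on $\Delta^{|\mathcal{X}|}$. The Banach fixed-point theorem (as invoked via \cite{SELL197342} in Proposition~\ref{eQ_MFG}) then delivers a unique fixed point $\mu^{*\phi}$, and \cite[Theorem 3.1]{563625}, applied exactly as in Proposition~\ref{eQ_MFG}, upgrades it to a GASE of the ODE.

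For the contraction, decompose
\begin{align*}
G(\mu)-G(\mu')
&= \bigl[\mathrm{P}^{\softmin_\phi Q^*_\mu,\mu}\mu - \mathrm{P}^{\softmin_\phi Q^*_{\mu'},\mu}\mu\bigr] \\
&\quad + \bigl[\mathrm{P}^{\softmin_\phi Q^*_{\mu'},\mu}\mu - \mathrm{P}^{\softmin_\phi Q^*_{\mu'},\mu'}\mu\bigr] \\
&\quad + \bigl[\mathrm{P}^{\softmin_\phi Q^*_{\mu'},\mu'}\mu - \mathrm{P}^{\softmin_\phi Q^*_{\mu'},\mu'}\mu'\bigr].
\end{align*}
The first bracket is precisely the quantity controlled in~\eqref{LipP2Q}, so its $\ell^1$-norm is at most $\phi|\mathcal{A}|\,\|Q^*_\mu-Q^*_{\mu'}\|_\infty$; using the $\ell^1$-form of the Lipschitz estimate on $\mu\mapsto Q^*_\mu$ available at an intermediate step in the proof of Proposition~\ref{eQ_MFG}, namely $\|Q^*_\mu - Q^*_{\mu'}\|_\infty \le \frac{L_f + \frac{\gamma}{1-\gamma}L_p\|f\|_\infty}{1-\gamma}\|\mu-\mu'\|_1$, this contributes $\phi|\mathcal{A}|\,\frac{L_f + \frac{\gamma}{1-\gamma}L_p\|f\|_\infty}{1-\gamma}\|\mu-\mu'\|_1$. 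The second bracket involves only a change of $\mu$ in the kernel and is bounded by $L_p\|\mu-\mu'\|_1$ via Assumption~\ref{fp_lipschitz}. The third bracket is the action of a fixed stochastic matrix on $\mu-\mu'$; by the minorisation argument already used at the end of the proof of Proposition~\ref{PLipschitz} (with $c_{min}^\phi \ge c_{min}$), it is bounded by $(1-|\mathcal{X}|c_{min})\|\mu-\mu'\|_1$. Summing these three contributions yields the Lipschitz constant
\[
\kappa \;:=\; \phi|\mathcal{A}|\,\frac{L_f + \frac{\gamma}{1-\gamma}L_p\|f\|_\infty}{1-\gamma} + L_p + 1 - |\mathcal{X}|c_{min},
\]
which is precisely the quantity Assumptions~\ref{mfclp} and~\ref{controlphic} have been engineered to make strictly less than $1$, as already noted in the paragraph preceding the statement.

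For completeness, the GASE conclusion can also be seen directly from the contraction via a Lyapunov argument: setting $V(\mu) := \|\mu - \mu^{*\phi}\|_1$, the identity
\[
\mu + h\bigl(G(\mu)-\mu\bigr) - \mu^{*\phi} \;=\; (1-h)(\mu - \mu^{*\phi}) + h\bigl(G(\mu) - G(\mu^{*\phi})\bigr), \qquad h\in[0,1],
\]
together with the triangle inequality gives $D^+ V(\mu_t) \le -(1-\kappa)\,V(\mu_t)$, hence exponential convergence to $\mu^{*\phi}$ from any initial condition. The main obstacle is the bookkeeping in the first bracket: one must pass through $\softmin_\phi$, through the Lipschitz dependence of $Q^*_\mu$ on $\mu$, and carefully choose between $\ell^1$ and $\ell^\infty$ at each step so that the constants ultimately line up with the form of Assumption~\ref{controlphic}. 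Once that is done, the remaining pieces are standard.
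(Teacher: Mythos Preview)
Your proposal is correct and follows essentially the same approach as the paper: show that $G(\mu)=\mathrm{P}^{\softmin_\phi Q^*_\mu,\mu}\mu$ is a strict $\ell^1$-contraction with constant $\phi|\mathcal{A}|\frac{L_f+\frac{\gamma}{1-\gamma}L_p\|f\|_\infty}{1-\gamma}+L_p+1-|\mathcal{X}|c_{min}<1$, then invoke \cite{SELL197342} and \cite[Theorem 3.1]{563625}. The only cosmetic difference is that the paper groups your second and third brackets into a single term (using the combined bound $\|\mathrm{P}^{\softmin_\phi Q,\mu}\mu-\mathrm{P}^{\softmin_\phi Q,\mu'}\mu'\|_1\le(1+L_p-|\mathcal{X}|c_{min})\|\mu-\mu'\|_1$ already recalled from Proposition~\ref{PLipschitz}), and does not include your additional Lyapunov argument.
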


\begin{proof}
We show that $\mu \mapsto \mathrm{P}^{\softmin_\phi Q^*_{\mu},\mu}\mu$ is a strict contraction. First, recall that, from Proposition \ref{PLipschitz},  we have 
    \begin{align*}
        \|\mathrm{P}^{\softmin_\phi Q, \mu}\mu-\mathrm{P}^{\softmin_\phi Q, \mu'}\mu'\|_1 &\leq \|\mathrm{P}^{\softmin_\phi Q, \mu}\mu-\mathrm{P}^{\softmin_\phi Q, \mu}\mu')\|_1 + \|\mathrm{P}^{\softmin_\phi Q, \mu}\mu'-\mathrm{P}^{\softmin_\phi Q, \mu'}\mu'\|_1  \\
        &\leq (1-|\mathcal{X}|c_{min}^\phi)\|\mu-\mu'\|_1 + L_p\|\mu-\mu'\|_1 \\
        &\leq (1+L_p-|\mathcal{X}|c_{min})\|\mu-\mu'\|_1.
    \end{align*}
    We have:
    \begin{align*}
        &\|\mathrm{P}^{\softmin_\phi Q^*_{\mu}, \mu}\mu - \mathrm{P}^{\softmin_\phi Q^*_{\mu'}, \mu'}\mu'\|_1 
        \\
        &\leq \|\mathrm{P}^{\softmin_\phi Q^*_{\mu}, \mu}\mu - \mathrm{P}^{\softmin_\phi Q^*_{\mu'}, \mu}\mu\|_1 +\|\mathrm{P}^{\softmin_\phi Q^*_{\mu'}, \mu}\mu - \mathrm{P}^{\softmin_\phi Q^*_{\mu'}, \mu'}\mu'\|_1 \\
        &\leq \|\mathcal{P}_2( Q^*_{\mu},\mu) - \mathcal{P}_2( Q^*_{\mu'},\mu)\|_1 +\|\mathrm{P}^{\softmin_\phi Q^*_{\mu'}, \mu}\mu - \mathrm{P}^{\softmin_\phi Q^*_{\mu'}, \mu'}\mu'\|_1\\
        &\leq \|\mathcal{P}_2( Q^*_{\mu},\mu) - \mathcal{P}_2( Q^*_{\mu'},\mu)\|_1 +  (1+L_p-|\mathcal{X}|c_{min})\|\mu-\mu'\|_1\\
        &\leq \phi|\mathcal{A}|\|Q^*_{\mu}-Q^*_{\mu'}\|_\infty +(1+L_p-|\mathcal{X}|c_{min})\|\mu-\mu'\|_1\\
        &\leq (\phi |\mathcal{A}|\frac{L_f + \gamma L_p\|Q^*_{\mu}\|_\infty}{1-\gamma} + L_p + 1-|\mathcal{X}|c_{min})\|\mu-\mu'\|_1,
    \end{align*}
    which shows the strict contraction property under Assumption \ref{controlphic}. As a result, by contraction mapping theorem \cite{SELL197342}, a unique GASE exists and furthermore by \cite[Theorem 3.1]{563625}, $\mu_t$ will converge to it.
\end{proof}

\section{Proof of existence and uniqueness of a GASE to the second O.D.E. for $Q_t$ in \eqref{MFCODE}}\label{GASEproofMFC}

To prove that the GASE exists for the second O.D.E. in \eqref{MFCODE}, we need to control the parameter $\phi$ properly to have a strict contraction. We will use again Assumption~\ref{controlphic}. 
With this assumption and Assumption \ref{mfclp}, we have $\gamma + (L_f + \gamma L_p\|Q\|_\infty)\frac{\phi|\mathcal{A}|}{|\mathcal{X}|c_{min}-L_p} <1$ which will give the strict contraction property we need in the next proposition.

\begin{proposition}\label{eQ_MFC}
    Suppose Assumption \ref{fp_lipschitz}, \ref{mfclp} and \ref{controlphic} hold. Then $\dot{Q}_t = \mathcal{T}_2(Q_t,{\mu^{*\phi}_{Q_t}})$ has a unique GASE, that we will denote by $Q^{*\phi}$.
\end{proposition}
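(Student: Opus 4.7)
The plan is to reduce the existence of a GASE for $\dot Q_t = \mathcal T_2(Q_t,\mu^{*\phi}_{Q_t})$ to a fixed-point problem for the composed map $F(Q) := \mathcal B_{\mu^{*\phi}_Q} Q$, and show that $F$ is a strict contraction on $\mathbb R^{|\mathcal X|\times|\mathcal A|}$ equipped with $\|\cdot\|_\infty$. Since $\mathcal T_2(Q,\mu^{*\phi}_Q) = \mathcal B_{\mu^{*\phi}_Q}Q - Q$, any fixed point of $F$ is an equilibrium of the ODE and conversely, so uniqueness of the fixed point gives uniqueness of the equilibrium.

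First I would decompose, for any $Q, Q'$,
\begin{align*}
\|F(Q) - F(Q')\|_\infty \le \|\mathcal B_{\mu^{*\phi}_Q}Q - \mathcal B_{\mu^{*\phi}_Q}Q'\|_\infty + \|\mathcal B_{\mu^{*\phi}_Q}Q' - \mathcal B_{\mu^{*\phi}_{Q'}}Q'\|_\infty.
\end{align*}
The first term is controlled by the standard Bellman contraction used in the proof of Proposition~\ref{TLipschitz}, namely $\|\mathcal B_\mu Q - \mathcal B_\mu Q'\|_\infty \le \gamma\|Q-Q'\|_\infty$. The second term is the $\mu$-Lipschitz part of Proposition~\ref{TLipschitz}, giving a bound of the form $(L_f + \gamma L_p\|Q'\|_\infty)\|\mu^{*\phi}_Q - \mu^{*\phi}_{Q'}\|_1$; I would then plug in the Lipschitz estimate from Proposition~\ref{emu_MFC}, namely $\|\mu^{*\phi}_Q - \mu^{*\phi}_{Q'}\|_1 \le \frac{\phi|\mathcal A|}{|\mathcal X|c_{min}-L_p}\|Q-Q'\|_\infty$ (passing through $\|\cdot\|_\infty$ by the trivial comparison on a finite-dimensional space). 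Combining these yields
\begin{align*}
\|F(Q)-F(Q')\|_\infty \le \Bigl(\gamma + (L_f + \gamma L_p\|Q'\|_\infty)\frac{\phi|\mathcal A|}{|\mathcal X|c_{min}-L_p}\Bigr)\|Q-Q'\|_\infty.
\end{align*}

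The main obstacle is the appearance of $\|Q'\|_\infty$ in the contraction coefficient: the bound is only useful if $Q'$ is controlled a priori. To handle this I would restrict the argument to the invariant ball $\mathcal K := \{Q : \|Q\|_\infty \le \|f\|_\infty/(1-\gamma)\}$, which is stable under $\mathcal B_{\mu^{*\phi}_Q}$ by the standard discounted-cost bound, hence stable under $F$. On $\mathcal K$ we have $\|Q'\|_\infty \le \|f\|_\infty/(1-\gamma)$, and Assumption~\ref{controlphic} with Assumption~\ref{mfclp} gives precisely
\begin{align*}
\gamma + \Bigl(L_f + \frac{\gamma}{1-\gamma} L_p\|f\|_\infty\Bigr)\frac{\phi|\mathcal A|}{|\mathcal X|c_{min}-L_p} < 1,
\end{align*}
as indicated in the text preceding the proposition. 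This provides the strict contraction of $F$ on $\mathcal K$.

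Finally, I would invoke the Banach contraction mapping theorem \cite{SELL197342} to obtain a unique fixed point $Q^{*\phi}\in\mathcal K$ of $F$, which is therefore the unique equilibrium of $\dot Q_t = \mathcal T_2(Q_t,\mu^{*\phi}_{Q_t})$. Global asymptotic stability then follows from \cite[Theorem 3.1]{563625}, once one notes that the right-hand side is Lipschitz in $Q$: Lipschitz continuity of $\mathcal T_2$ in both arguments (Proposition~\ref{TLipschitz}) composed with the Lipschitz continuity of $Q\mapsto \mu^{*\phi}_Q$ (Proposition~\ref{emu_MFC}) yields a Lipschitz vector field, and the strict contraction of $F$ makes the equilibrium attract every trajectory exponentially, completing the proof.
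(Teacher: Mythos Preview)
Your proof is correct and follows essentially the same route as the paper: show that $Q\mapsto \mathcal B_{\mu^{*\phi}_Q}Q$ is a strict contraction via the same triangle-inequality decomposition, then invoke \cite{SELL197342} and \cite[Theorem 3.1]{563625}. In fact you are slightly more careful than the paper, which leaves the $\|Q\|_\infty$-dependent contraction coefficient unaddressed; your restriction to the invariant ball $\mathcal K=\{Q:\|Q\|_\infty\le\|f\|_\infty/(1-\gamma)\}$ is exactly the missing step needed to make the paper's argument rigorous.
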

\begin{proof}
We show that $Q \mapsto \mathcal{B}_{\mu^{*\phi}_Q}Q$ is a strict contraction. We have:
    \begin{align*}
        \|\mathcal{B}_{\mu^{*\phi}_Q}Q - \mathcal{B}_{\mu^{*\phi}_{Q'}}Q'\|_\infty &\leq \|\mathcal{B}_{\mu^{*\phi}_Q}Q - \mathcal{B}_{\mu^{*\phi}_{Q}}Q'\|_\infty+\|\mathcal{B}_{\mu^{*\phi}_Q}Q' - \mathcal{B}_{\mu^{*\phi}_{Q'}}Q'\|_\infty \\
        &\leq \gamma \|Q-Q'\|_\infty + (L_f + \gamma L_p\|Q\|_\infty)\|\mu^{*\phi}_Q - \mu^{*\phi}_{Q'}\|_1 \\
        &\leq \gamma \|Q-Q'\|_\infty + (L_f + \gamma L_p\|Q\|_\infty)\frac{\phi|\mathcal{A}|}{|\mathcal{X}|c_{min}-L_p}\|Q-Q'\|_\infty\\
        &\leq (\gamma + (L_f + \gamma L_p\|Q\|_\infty)\frac{\phi|\mathcal{A}|}{|\mathcal{X}|c_{min}-L_p})\|Q-Q'\|_\infty,
    \end{align*}
    which shows the strict contraction property. As a result, by contraction mapping theorem \cite{SELL197342}, a unique GASE exists and furthermore by \cite[Theorem 3.1]{563625}, it is the limit of $Q_t$.
\end{proof}

\section{MFC Optimal Policy}\label{MFCoptimal}
Recall that we assume $c>2\gamma$ and $p<0.5$. Let us consider a general policy $\pi$ characterized by $ p_{0s}$ and $p_{1s}$ in $[0,1]$:
\[
\pi(s|0) = p_{0s},\quad \pi(s|1) = p_{1s}.
\]

Then, we can compute the limiting/invariant distributions under the policies $\tilde\pi^{(x,a)}$ characterized by the probabilities to be at state $1$:
\[
\mu_1^{(0,s)} = \frac{p}{1-(1-2p)\pi(s|1)}, \quad \mu_1^{(0,m)} =\frac{1-p}{2-2p-(1-2p)\pi(s|1)},
\]
\[
\mu_1^{(1,s)} = \frac{1-p - (1-2p)\pi(s|0)}{1 - (1-2p)\pi(s|0)}, \quad \mu_1^{(1,m)} =\frac{1-p - (1-2p)\pi(s|0)}{2-2p - (1-2p)\pi(s|0)}.
\]

1. First case: if $Q^\pi(0,s)-Q^\pi(0,m)>0$ and $Q^\pi(1,m)-Q^\pi(1,s) > 0$, by the Bellman equation $Q^\pi=\mathcal{B}Q^\pi$ as defined in \eqref{eq:BQ},
we can compute $Q^\pi$:
\[
Q^\pi(0,m) = \frac{c\mu_0^{(0,m)}+\gamma(1-p)(1-c\mu_0^{(0,m)} + c\mu_0^{(1,s)})}{1-\gamma}, \quad Q^\pi(1,s)=Q^\pi(0,m)+1-c\mu_0^{(0,m)} + c\mu_0^{(1,s)},
\]
\[
Q^\pi(0,s) = c\mu_0^{(0,s)}+\gamma\frac{c\mu_0^{(0,m)}+\gamma(1-p)(1-c\mu_0^{(0,m)} + c\mu_0^{(1,s)})}{1-\gamma}+\gamma p(1-c\mu_0^{(0,m)} + c\mu_0^{(1,s)}),
\]
\[
 Q^\pi(1,m)=Q^\pi(0,s)+1-c\mu_0^{(0,s)} + c\mu_0^{(1,m)}.
\]
Then by $Q^*(x,a) := \inf_{\pi} Q^{\pi}(x,a)$, we deduce that $p_{1s} = 1$ and $p_{0s} = 0$ (corresponding to the pure policy $(m,s)$), so that:
\[
Q^*(0,m)=\frac{cp-\gamma p+\gamma}{1-\gamma}, \quad Q^*(1,s)=Q^*(0,m) +1,
\]
\[
Q^*(0,s)=\frac{c}{2}+\gamma\frac{cp-2\gamma p+\gamma +p}{1-\gamma},\quad Q^*(1,m)=Q^*(0,s)+1,
\]
implying that our starting conditions are satisfied:
\begin{align*}
    \begin{cases}
   Q^*(0,s)-Q^*(0,m)=    c\mu_0^{(0,s)} - c\mu_0^{(0,m)} - \gamma (1-2p)(1-c\mu_0^{(0,m)} + c\mu_0^{(1,s)}) =\frac{1}{2}(1-2p)(c-2\gamma)>0,\\
      Q^*(1,m)-Q^*(1,s)=  c\mu_0^{(1,m)} - c\mu_0^{(1,s)} - \gamma (1-2p)(1-c\mu_0^{(0,m)} + c\mu_0^{(1,s)}) =\frac{1}{2}(1-2p)(c-2\gamma)>0.
    \end{cases}
\end{align*}

2. Second case: if $Q^\pi(0,s)-Q^\pi(0,m)<0$ and $Q^\pi(1,m)-Q^\pi(1,s) < 0$,
we can compute $Q^\pi$ ,
\[
Q^\pi(0,s) = \frac{c\mu_0^{(0,s)}+\gamma p (1-c\mu_0^{(0,s)} + c\mu_0^{(1,m)})}{1-\gamma}, \quad Q^\pi(1,m)=Q^\pi(0,s)+1-c\mu_0^{(0,s)} + c\mu_0^{(1,m)},
\]
\[
Q^\pi(0,m) = c\mu_0^{(0,m)}+\gamma\frac{c\mu_0^{(0,m)}+\gamma p (1-c\mu_0^{(0,s)} + c\mu_0^{(1,m)})}{1-\gamma}+\gamma (1-p) (1-c\mu_0^{(0,s)} + c\mu_0^{(1,m)}),
\]
\[
 Q^\pi(1,s)=Q^\pi(0,m)+1-c\mu_0^{(0,m)} + c\mu_0^{(1,s)}.
\]
Then by $Q^*(x,a) := \inf_{\pi} Q^{\pi}(x,a)$, we get that $p_{1s} = 1$ and $p_{0s} = 0$, so that 
\[
Q^*(0,s)=\frac{c/2+\gamma p}{1-\gamma}, \quad Q^*(1,m)=Q^*(0,s) +1,
\]
\[
Q^*(0,m)=cp+\gamma\frac{cp+\gamma p}{1-\gamma} +\gamma(1-p),\quad Q^*(1,s)=Q^*(0,m)+1.
\]
Consequently,
\begin{align*}
    \begin{cases}
        c\mu_0^{(0,m)} - c\mu_0^{(0,s)} + \gamma (1-2p) (1-c\mu_0^{(0,s)} + c\mu_0^{(1,m)})=\frac{1}{2}(1-2p)(2\gamma-c) <0\\
        c\mu_0^{(1,s)} - c\mu_0^{(1,m)} + \gamma (1-2p) (1-c\mu_0^{(0,s)} + c\mu_0^{(1,m)})=\frac{1}{2}(1-2p)(2\gamma-c)< 0
    \end{cases}
\end{align*} 
as $2\gamma<c$ which breaks our assumption. As a result, under this case, we do not have a better policy than $(m,s)$ obtained in the first case..

3. Third Case: if $Q^\pi(0,s)-Q^\pi(0,m)<0$ and $Q^\pi(1,s)-Q^\pi(1,m) < 0$,
we can compute $Q^\pi$ as,
\[
Q^\pi(0,s) = \frac{(1-\gamma+\gamma p)c\mu_0^{(0,s)}+\gamma p(c\mu_0^{(1,s)}+1)}{(1-\gamma)(1-\gamma(1-2p))}, \quad Q^\pi(1,m)=Q^\pi(0,s)-c\mu_0^{(0,s)} + c\mu_0^{(1,m)}+1,
\]
\[
Q^\pi(1,s) = \frac{(1-\gamma+\gamma p)(c\mu_0^{(1,s)}+1)+\gamma pc\mu_0^{(0,s)}}{(1-\gamma)(1-\gamma(1-2p))}, \quad Q^\pi(0,m)=Q^\pi(1,s)-1-c\mu_0^{(1,s)} + c\mu_0^{(0,m)}.
\]
Then by $Q^*(x,a) := \inf_{\pi} Q^{\pi}(x,a)$, we get that $p_{1s} = 1$ and $p_{0s} = 0$, so that 
\[
Q^*(0,s)  = \frac{(1-\gamma+\gamma p)\frac{c}{2}+\gamma p(cp+1)}{(1-\gamma)(1-\gamma(1-2p))}, \quad Q^*(1,m)=\frac{(1-\gamma+\gamma p)\frac{c}{2}+\gamma p(cp+1)}{(1-\gamma)(1-\gamma(1-2p))}+1,
\]
\[
Q^*(1,s)  = \frac{(1-\gamma+\gamma p)cp+\gamma p(\frac{c}{2}+1)}{(1-\gamma)(1-\gamma(1-2p))}, \quad Q^*(0,m)=\frac{(1-\gamma+\gamma p)cp+\gamma p(\frac{c}{2}+1)}{(1-\gamma)(1-\gamma(1-2p))}-1,
\]
and
\[
Q^*(0,m)-Q^*(0,s) = (1-2p)(\gamma-\frac{c}{2})-1 <0
\]
which is  a contradiction. As a result, under this case, we do not have a better policy than $(m,s)$.

Similarly we do not have a better policy than $(m,s)$ for case $Q^\pi(0,s)-Q^\pi(0,m)>0$ and $Q^\pi(1,m)-Q^\pi(1,s) > 0$.

In conclusion, we have shown that the unique solution to the MFC problem is the pure policy $(m,s)$,
so that Assumption \ref{uniqueness minimum} is satisfied.

\end{document}